\newcommand{\cK}{{\mathcal {K}}}
\newcommand{\cL}{{\mathcal {L}}}
\newcommand{\cC}{{\mathcal {C}}}
\numberwithin{equation}{section}
\renewcommand{\nabla}{D}
\newcommand{\opnm}[3]{\|#1\|_{{\cal L}({\cal H}^{#2},{\cal H}^{#3})}}
\newcommand{\C}{\mathcal{C}}
\newcommand{\cM}{{\mathcal M}}
\newcommand{\cH}{{\mathcal H}}
\newcommand{\Hel}{\cH^{s}}
\newcommand{\h}{\mathcal{H}}
\newcommand{\Hf}{\mathsf{H}}
\newcommand{\cHHl}{\cH^{s} \times \cH^{s}}
\newcommand{\bbR}{\mathbb R}
\newcommand{\bbW}{\mathbb W}
\newcommand{\sR}{\mathsf R}
\newcommand{\cal}[1]{\mathcal{#1}} 
\newcommand{\tr}{\mathrm{Trace}}
\newcommand{\RR}{\mathbb{R}}
\newcommand{\eqdef}{\overset{{\mbox{\tiny  def}}}{=}}
\newcommand{\bra}[1]{\langle #1 \rangle}
\newcommand{\be}{\begin{equs}}
\newcommand{\ee}{\end{equs}}
\newcommand{\Normal}{\textrm{N}}
\newcommand{\dist}{\overset{\mathcal{D}}{\sim}}
\newcommand{\EE}{\mathbb{E}}
\renewcommand\phi{\varphi}
\newcommand{\kd}{^{k,\delta}}
\newcommand{\kpod}{^{k+1,\delta}}
\newcommand{\de}{^{\delta}}
\newcommand{\lv}{\left\vert}
\newcommand{\rv}{\right\vert}
\newcommand{\nor}[1]{\|#1\|_s}
\newcommand{\nors}[1]{\|#1\|_{s\times s}}
\newcommand{\les}{\lesssim}
\def \be{\begin{equs}}
\def \ee{\end{equs}}
\def \sol{{SOL-HMC}\,}
\definecolor{darkred}{rgb}{.7,0,0}
\definecolor{darkgreen}{rgb}{0,0.7,0}
\definecolor{darkblue}{rgb}{0,0,0.7}
\newtheorem{theorem}{Theorem}[section]
\newtheorem{lemma}[theorem]{Lemma}
\newtheorem{remark}[theorem]{Remark}
\newtheorem{remarks}[theorem]{Remarks}
\newtheorem{prop}[theorem]{Proposition}
\newtheorem{assumptions}[theorem]{Assumptions}
\newtheorem{cond}[theorem]{Condition}
\begin{document}

\title[A Function Space HMC Algorithm With Second Order Langevin Diffusion Limit]
{A Function Space HMC Algorithm With Second Order Langevin Diffusion Limit}


\author{Michela Ottobre$^{\ast}$} 
\thanks{$^{\ast}$ michelaottobre@gmail.com, Imperial College London, 
    Mathematics Department, 
180 Queen's Gate, SW7 2AZ, London
}
\author{Natesh S. Pillai$^{\ddag}$}
\thanks{$^{\ddag}$pillai@fas.harvard.edu, 
   Department of Statistics
    Harvard University,1 Oxford Street, Cambridge
    MA 02138, USA}

\author{Frank J. Pinski$^{\sharp}$}
\thanks{$^{\sharp}$pinskifj@ucmail.uc.edu, Department of Physics,
University of Cincinnati,
PO Box 210011,
Cincinnati, OH 45221-0377, USA}

  \author{Andrew M. Stuart$^{\natural}$}
\thanks{$^{\natural}$a.m.stuart@warwick.ac.uk, 
Mathematics Institute,
    Warwick University, CV4 7AL, UK
}
  \thanks{NSP is partially supported by NSF-DMS 1107070}
\thanks{AMS is supported by EPSRC and ERC}


\maketitle
\begin{abstract}
We describe a new MCMC method optimized for the sampling of
probability measures on Hilbert space which have a density
with respect to a Gaussian; such measures arise in the
Bayesian approach to inverse problems, and in conditioned
diffusions. Our algorithm is based on two key design principles: 
(i) algorithms which 
are well-defined in infinite dimensions result in methods which do not 
suffer from the curse of dimensionality when they are
applied to approximations of the infinite dimensional
target measure on $\bbR^N$; 
(ii) non-reversible algorithms can have better mixing properties
compared to their reversible counterparts.
The method we introduce is based on the hybrid Monte Carlo
algorithm, tailored to incorporate these two design principles.
The main result of this paper states that the new algorithm, 
appropriately rescaled, converges weakly to a second order Langevin 
diffusion on Hilbert space; as a consequence the algorithm explores
the approximate target measures on $\bbR^N$ in a number of
steps which is independent of $N$.  We also present the underlying 
theory for the limiting non-reversible diffusion on Hilbert space, including
characterization of the invariant measure, and we describe
numerical simulations  demonstrating that the proposed
method has favourable mixing properties as
an MCMC algorithm.
\end{abstract}

   \subjclass{Primary 60J22; Secondary 60J20, 60H15, 65C40}

   \keywords{Hybrid Monte Carlo Algorithm, Second Order Langevin Diffusion, Diffusion Limits, Function Space Markov Chain Monte Carlo}





\section{Introduction} Markov chain Monte Carlo (MCMC) algorithms for sampling from high dimensional probability distributions constitute an important part of 
Bayesian statistical inference. This paper is focussed
on the design and analysis of such algorithms to sample a probability distribution 
on an infinite dimensional Hilbert space 
$\cH$ defined via a density with respect to a Gaussian; such problems
arise in the Bayesian approach to inverse problems (or Bayesian
nonparametrics) \cite{Stua:10} and in the theory of conditioned diffusion
processes \cite{Hair:Stua:Voss:10}. 
Metropolis-Hastings algorithms \cite{Hast:70}
constitute a popular class of MCMC methods for sampling an arbitrary
probability measure. They proceed by constructing
an irreducible, {\em reversible} Markov chain by first proposing a candidate move
and then accepting it with a certain probability. The acceptance probability is chosen so as to preserve the detailed balance condition ensuring reversibility. In this work, we build on the generalized  Hybrid Monte Carlo (HMC) method
of \cite{H91} to construct a new {\em non-reversible}
MCMC method appropriate for sampling measures defined via density with respect
to a Gaussian measure on a Hilbert space. We also demonstrate that, for
a particular set of parameter values in the algorithm, there is
a  natural diffusion limit
to the second order Langevin (SOL) equation with invariant
measure given by the target. We thus name the new method 
the \sol algorithm. Our construction is motivated by the following two key design principles:
 \begin{enumerate}
 \item designing proposals which are well-defined on the 
Hilbert space results in MCMC
methods which do not suffer from the curse of dimensionality when
applied to sequences of approximating finite dimensional measures
on $\bbR^N$;
 \item non-reversible MCMC algorithms, which are hence not from the
Metropolis-Hastings class, can have better sampling properties 
in comparison with their reversible counterparts. 
 \end{enumerate}
The idea behind the first principle is explained in \cite{David}
which surveys a range of algorithms designed specifically
to sample measures defined via a density with respect to a Gaussian;
the unifying theme is that the proposal is reversible with 
respect to the underlying Gaussian so that the accept-reject
mechanism depends only on the  likelihood function and not the
prior distribution. The second principle above is also well-documented: 
non-reversible Markov chains, often constructed by 
performing individual time-reversible \footnote{For a definition of time-reversibility see Section \ref{sec:algorithm}.} steps successively
\cite{hwang1993,hwang2005}, or by building on Hamiltonian mechanics  
\cite{H91, diac:etal:2000,neal2010mcmc}), may have better 
mixing properties. 
 
Since the target distribution has support on an infinite dimensional space, 
practical implementation of MCMC involves  
discretizing  the parameter space, resulting
in a target measure on $\mathbb{R}^N$, with $N \gg 1$. 
It is well known that such discretization schemes
can suffer from the curse of dimensionality: the efficiency of 
the algorithm decreases as the dimension $N$ of the discretized 
space grows large. One way of understanding this is through
diffusion limits of the algorithm. In the context of measures
defined via density with respect to Gaussian this approach
is taken in the papers \cite{Matt:Pill:Stu:11, Pill:Stu:Thi:12} which show
that the random walk Metropolis and Langevin algorithms
require ${\mathcal O}(N)$ and ${\mathcal O}(N^{\frac13})$ steps
respectively to sample the approximating target measure
in $\bbR^N$. If, however, the algorithm is defined on Hilbert
space then it is possible to explore the target in ${\mathcal O}(1)$
steps and this may also be demonstrated by means of
a diffusion limit. The paper \cite{PST13} uses this
idea to study  a Metropolis-Hastings algorithm which is defined
on Hilbert space and is a small modification of the random walk Metropolis
method; the diffusion limit is a first order reversible
Langevin diffusion.  Moreover the diffusion limits  
in \cite{Matt:Pill:Stu:11,{Pill:Stu:Thi:12}} are derived under stationarity whereas
the results in \cite{Pill:Stu:Thi:12} hold for \emph{any} initial condition.
  The above discussion has important practical consequences: as implied
  by the above diffusion limits, algorithms which are well defined on
  the function spaces show an order of magnitude improvement in the mixing time
  in these high dimensional sampling problems. 
  
  Here we employ similar techniques as that of \cite{PST13} to study our new 
non-reversible MCMC method, and show that, after appropriate rescaling,
it converges to a second
order non-reversible Langevin diffusion. 
Our new algorithm is inspired
by similar algorithms in finite dimensions, starting with the
work of \cite{H91}, who showed how the momentum updates
could be correlated in the original HMC method of \cite{Duane1987216}, 
and the more recent work
\cite{bou2011patch} which made the explicit connection to
second order Langevin diffusions; a helpful overview and discussion
may be found in  \cite{neal2010mcmc}.
Diffusion limit results similar to ours
are proved in \cite{bou2011patch,Bou} for finite dimensional problems. 
In those papers
an accept-reject mechanism is appended to various standard
integrators for the first and second order Langevin equations,
and shown not to destroy the strong pathwise convergence
of the underlying methods. The reason for this is that rejections
are rare when small time-steps are used. The same reasoning
underlies the results we present here, although we consider
an infinite dimensional setting and use only weak convergence.
Another existing work underpinning that presented here
is the paper \cite{BPSS11} which generalizes the hybrid Monte Carlo
method for measures defined via density with respect to a Gaussian
so that it applies on Hilbert space. Indeed the algorithm
we introduce in this paper includes the one from \cite{BPSS11} as
a special case and uses the split-step (non-Verlet) integrator
first used there. The key idea of the splitting employed is to
split according to linear and nonlinear dynamics within the
numerical Hamiltonian integration step of the algorithm, rather
than according to position and momentum. This allows for an algorithm
which exactly preserves the underlying Gaussian reference measure,
without rejections, and is key to the fact that the methods are
defined on Hilbert space even in the the non-Gaussian case.

We now define the class of models to 
which our main results are applicable. 
Let $\pi_0$ and $\pi$ be two measures on a Hilbert space 
$\Bigl(\h,\langle \cdot,\cdot\rangle,\|\cdot\|\Bigr)$ and
assume that $\pi_0$ is Gaussian so that $\pi_0 = \mathrm{N}(0,C)$, 
with $C$ a covariance operator. The target measure $\pi$ is assumed
to be absolutely continuous with respect to $\pi_0$ and given by the
identity 
\begin{align}
\frac{d\pi}{d\pi_0}(x) = M_{\Psi} \exp \bigl( -\Psi(x) \bigr), \quad x \in \h
\label{eqn:targmeas0}
\end{align}
for a real valued functional $\Psi$ (which denotes the 
negative log-likelihood in the case of Bayesian inference) and $M_{\Psi}$ a 
normalizing constant.  Although the above formulation may
appear quite abstract, we emphasize that this points to
the wide-ranging applicability of our theory: the setting 
encompasses a large class of models arising in practice,
including nonparametric regression using Gaussian 
random fields and statistical inference for diffusion processes and bridge sampling
\cite{Hair:Stua:Voss:10, Stua:10}.

In Section \ref{sec:Hflow} we introduce 
our new algorithm. We start in a finite dimensional context
and then explain parametric choices made with reference
to the high or infinite dimensional setting. 
We demonstrate that various other algorithms defined on Hilbert space,
such as the function space MALA \cite{Besk:etal:08}
and function space HMC algorithms \cite{BPSS11}, are special cases.
In Section \ref{sec:2}
we describe the infinite dimensional setting in full and,
in particular, detail the relationship between the change of measure,
encapsulated in $\Psi$, and the properties of the Gaussian prior $\pi_0.$ 
Section \ref{sec:infinite} contains the theory of the SPDE which
both motivates our class of algorithms, and acts as a limiting process
for a specific instance of our algorithm applied on a 
sequence of spaces of increasing dimension $N$. We prove
existence and uniqueness of solutions to the SPDE and
characterize its invariant measure.
Section \ref{sec:difflim} contains statement of the
key diffusion limit Theorem \ref{thm:difflim}. Whilst the
structure of the proof is outlined in some detail, various
technical estimates are left for Appendices A and B.
Section \ref{sec:num} contains some numerics illustrating the
new algorithm in the context of a problem from the theory of
conditioned diffusions. We make some brief concluding remarks in
Section \ref{sec:conc}.

The new algorithm proposed and analyzed in this paper is of interest
for two primary reasons. Firstly, it contains a number of existing
function space algorithms as special cases and hence plays a useful
conceptual role in unifying these methods. Secondly numerical
evidence demonstrates that the method is comparable in efficiency
to the function space HMC method introduced in \cite{BPSS11}
for a test problem arising in conditioned diffusions; until now, the 
function space HMC method was the clear best choice as 
demonstrated numerically in \cite{BPSS11}. Furthermore, our numerical results indicate that for certain parameter choices in the \sol algorithm,
and for certain target measures, we are able to improve upon the
performance of the function space HMC algorithm, 
corroborating a similar observation made in
\cite{H91} for the finite dimensional samplers that form
motivation for the new family of algorithms that we propose here. 
From a technical point of view
the diffusion limit proved in this paper is similar to that proved
for the function space MALA in \cite{Pill:Stu:Thi:12}, extending to
the non-reversible case; however
significant technical issues arise which are not present in the
reversible case and, in particular, incorporating momentum flips
into the analysis, which occur for every rejected step, requires
new ideas.

\section{The \sol Algorithm} 
\label{sec:Hflow} 

In this section we introduce the \sol algorithm studied
in this paper.
We first describe the basic ideas from stochastic dynamics
underlying this work, doing so in the finite dimensional setting of $\cH=\bbR^N$, \textit{i.e.}, when the target measure  $\pi(q)$
 is a probability measure on  $\mathbb{R}^N$ of the form 
$$
\frac{d\pi}{d\pi_0}(q)\propto \exp(-\Psi(q)),
$$
where  $\pi_0$ is a mean zero Gaussian with covariance matrix  $\cal {C}$ and  $\Psi(q)$  is a function defined on $\mathbb{R}^N$. A key idea is to
work with an extended phase space in which the original variables
are viewed as \lq positions' and then \lq momenta' are added to
complement each position.
We then explain the advantages of working with \lq velocities' rather
than \lq momenta', in the large dimension limit. And then finally
we introduce our proposed algorithm, which is built on the
measure preserving properties of the second order Langevin
equation. As already mentioned, our algorithm will build on some basic facts about Hamiltonian mechanics. For a synopsys about the Hamiltonian formalism see Appendix C.

\subsection{Measure Preserving Dynamics in an Extended Phase Space}
 
Introduce the auxiliary variable $p$ (\lq momentum') and $\cM$ a
user-specified, symmetric positive definite `mass' matrix.
Let $\Pi_0'$ denote the Gaussian on $\bbR^{2N}$ defined
as the independent product of Gaussians $N(0,\cal{C})$ and $N(0,\cM)$ 
on the $q$ and $p$ coordinates respectively, and define $\Pi'$
by 
\begin{equation*}
\frac{d\Pi'}{d\Pi_0'}(q,p) \propto \exp\bigl(-\Psi(q)\bigr).
\end{equation*}
 A key point to notice is that the marginal of $\Pi'(q,p)$ with
respect to $q$ is the target measure $\pi(q).$
Define the Hamiltonian in $\Hf:\bbR^{2N} \to \bbR$ given by 
\begin{equation*}
 \Hf(q,p) = \tfrac{1}{2}\langle p, \cM^{-1} p\rangle
+ \tfrac{1}{2}\langle q, \mathcal{L} q \rangle +\Psi(q)\,
\end{equation*}
where $\mathcal{L}=\mathcal{C}^{-1}$.  The  corresponding canonical Hamiltonian
differential equation is given by
\begin{equation}
\label{eq:sthameq}
\frac{dq}{dt}=\frac{\partial \Hf}{\partial p}=\cM^{-1}p\ ,\quad
\frac{dp}{dt}=-\frac{\partial \Hf}{\partial q} =-\mathcal{L} q - D\Psi(q)\ .
\end{equation}
This equation preserves any smooth
function of $\Hf(q,p)$ and, as a consequence, the
Liouville equation corresponding to \eqref{eq:sthameq}
preserves the probability density of
$\Pi'(q,p)$, which is proportional to $\exp\big(-\Hf(q,p)\bigr).$
This fact is the basis for HMC methods \cite{Duane1987216}
which randomly sample momentum
from the Gaussian $N(0,\cM)$ and then run the Hamiltonian flow 
for $T$ time units; the resulting Markov chain on $q$ is $\pi(q)$
invariant. In practice the Hamiltonian flow must be integrated numerically,
but if a suitable integrator is used (volume-preserving and time-reversible)
then a simple accept-reject compensation corrects for numerical error.

Define
\begin{eqnarray*}
z=\left(
\begin{array}{c}
q\\
p
\end{array}
\right)
\end{eqnarray*}
and 
\be
J = \left( \begin{array}{cc}
0 & I  \\
-I & 0  \\
 \end{array} \right).
\ee
Then the Hamiltonian system can be written as
\begin{align}\label{eqn:1}
 {dz \over dt} 
 = J \,\nabla \Hf(z) 
 \end{align}
where, abusing notation, $\Hf(z):=\Hf(q,p).$
The equation \eqref{eqn:1} preserves the measure $\Pi'$. 

Now define the matrix
\be
\mathcal{K} = \left( \begin{array}{cc}
\mathcal{K}_1 & 0  \\
0 & \mathcal{K}_2  \\
 \end{array} \right)
\ee
where both $\mathcal{K}_1$ and $\mathcal{K}_2$ are symmetric. The following SDE also
preserves the measure $\Pi'$:
\be
 {dz \over dt} 
 =  - \mathcal{K}\, \nabla \Hf(z) + \sqrt{2\mathcal{K}} {dW \over dt} \;.
 \ee
Here $W = (W_1, W_2)$ denotes a standard Brownian motion on $\bbR^{2N}.$ 
This SDE decouples into two independent equations for $q$ and $p$;
the equation for $q$ is what statisticians term the Langevin equation \cite{Pill:Stu:Thi:12}, namely
$${dq \over dt}  =  - \mathcal{K}_1\bigl(\mathcal{L} q+D\Psi(q)\bigr) + \sqrt{2\mathcal{K}_1} {dW_1 \over dt} \;,$$
whilst the equation for $p$ is simply the Ornstein-Uhlenbeck process:
$${dp \over dt}  =  -\mathcal{ K}_2 \cM^{-1}p + \sqrt{2\mathcal{K}_2} {dW_2 \over dt} \;.$$
Discretizing the Langevin equation
(respectively the random walk found by ignoring the drift) 
and adding an accept-reject mechanism,
leads to the Metropolis-Adjusted Langevin (MALA)
(respectively the Random Walk Metropolis (RWM) algorithm).

A natural idea is to try and combine benefits of the HMC algorithm, 
which couples the position and momentum coordinates, with the MALA and 
RWM  methods. This thought experiment suggests considering
the second order Langevin equation\footnote{Physicists often refer 
to this as {\em the} Langevin equation for the choice 
$\cK_1 \equiv 0$ which leads to noise only appearing 
in the momentum equation.} 
\begin{align} \label{eqn:solang}
 {dz \over dt} &
 = J \,\nabla \Hf(z) - \mathcal{K}\, \nabla \Hf(z) + \sqrt{2\mathcal{K}} {dW \over dt} \;,
 \end{align}
which also preserves $\Pi'$ as a straightforward calculation with
the Fokker-Planck equation shows.
\subsection{Velocity Rather Than Momentum} \label{subs:velnomom}

Our paper is concerned with using the equation \eqref{eqn:solang}
to motivate proposals for MCMC. In particular we will be
interested in choices of the matrices $\cM$, $\mathcal{K}_1$ and $\mathcal{K}_2$ which lead
to well-behaved algorithms in the limit of large $N$. To this end we
write the equation \eqref{eqn:solang} in position and momentum 
coordinates as
\begin{align*}
{dq\over dt} &= \cM^{-1}p-\mathcal{K}_1\bigl(\mathcal{L} q+D\Psi(q)\bigr)+\sqrt{2\mathcal{K}_1}{dW_1\over dt},\\
{dp\over dt} &= -\bigl(\mathcal{L} q+D\Psi(q)\bigr)-\mathcal{K}_2 \cM^{-1}p+\sqrt{2\mathcal{K}_2}{dW_2\over dt}.
\end{align*}
In our subsequent analysis, which concerns the large $N$ limit, it turns out 
to be useful to work with velocity rather than momentum coordinates; this is
because the optimal algorithms in this limit are based on ensuring that the 
velocity and position coordinates all vary on the same scale.
 For this reason we introduce
$v=\cM^{-1}p$ and rewrite the equations as
\begin{align*}
{dq\over dt} &= v-\cK_1\bigl(\cL q+D\Psi(q)\bigr)+\sqrt{2\cK_1}{dW_1\over dt},\\
\cM{dv\over dt} &= -\bigl(\cL q+D\Psi(q)\bigr)-\cK_2v+\sqrt{2\cK_2}{dW_2\over dt}.
\end{align*}
In the infinite dimensional setting, \textit{i.e.}, when $\cal H$ is an infinite dimensional Hilbert  space, this equation is still well posed (see \eqref{eq:maininf} below and Theorem \ref{t:ode}). 
However in this case $W_1$ and $W_2$ are  cylindrical Wiener processes  on $\cal H$ (see Section \ref{subsec:notation}) and $\mathcal{L}=\mathcal{C}^{-1}$ is 
necessarily an unbounded operator  on ${\cal H}$
because the covariance operator $\cal {C}$ is trace class on $\cal { H}$. The unbounded operators introduce undesirable behaviour in the large $N$ limit 
when we approximate them; thus we choose
$\mathcal{M}$ and the $\mathcal{K}_i$ to remove the appearance of unbounded
operators. To this end 
we set $\cM=\mathcal{L}=\cC^{-1}$, $\mathcal{K}_1=\Gamma_1 \cC$ and $\mathcal{K}_2=\Gamma_2 \cC^{-1}$ and assume that $\Gamma_1$ and $\Gamma_2$ 
commute with $\cC$
to obtain the equations
\begin{subequations}
 \label{eq:maininf1}
\begin{align}
{dq\over dt} &= v-\Gamma_1\bigl(q+\cC D\Psi(q)\bigr)+\sqrt{2\Gamma_1 \cC}{dW_1\over dt},\\
{dv\over dt} &= -\bigl(q+\cC D\Psi(q)\bigr)-\Gamma_2v+\sqrt{2\Gamma_2 \cC}{dW_2\over dt},
\end{align}
\end{subequations}
or simply
\begin{subequations}
 \label{eq:maininf}
\begin{align}
{dq\over dt} &= v-\Gamma_1\bigl(q+\cC D\Psi(q)\bigr)+\sqrt{2\Gamma_1 }{dB_1\over dt},\\
{dv\over dt} &= -\bigl(q+\cC D\Psi(q)\bigr)-\Gamma_2v+\sqrt{2\Gamma_2 }{dB_2\over dt}.
\end{align}
\end{subequations}
In the above $B_1$ and $B_2$ are $\cal H$-valued Brownian motions with covariance operator $\cC$.
This equation is well-behaved in infinite dimensions provided that
the $\Gamma_i$ are bounded operators, and under natural assumptions
relating the reference measure, via its covariance $\cC$, and the log
density $\Psi$, which is  a real valued functional defined on an appropriate subspace of $\cal H$.  
Detailed  definitions and assumptions regarding \eqref{eq:maininf} are contained in the next Section \ref{sec:2}. Under such assumptions the function 
\begin{equation}
\label{eq:F}
F(q):=q+ \cC D\Psi(q)
\end{equation}
has desirable properties (see Lemma \ref{lem:lipschitz+taylor}), making the existence
theory for \eqref{eq:maininf} straightforward. 
We develop such theory in Section \ref{sec:infinite} -- see Theorem 
\ref{t:ode}.  Furthermore, in  Theorem \ref{t:flowprev} we will also prove that equation \eqref{eq:maininf} preserves the measure
$\Pi(dq,dv)$ defined by 
\begin{equation}
\label{eq:target2}
\frac{d\Pi}{d\Pi_0}(q,v) \propto \exp\bigl(-\Psi(q)\bigr),
\end{equation}
where $\Pi_0$ is the independent product of $N(0,\cC)$ with itself.
The measure $\Pi$ (resp. $\Pi_0$) is simply the measure
$\Pi'$ (resp. $\Pi_0'$) in the case $\cM=\cC^{-1}$ and rewritten
in $(q,v)$ coordinates instead of $(q,p)$.
In finite dimensions the invariance of $\Pi$  follows from
the discussions concerning the invariance of $\Pi'$.

\subsection{Function Space Algorithm}\label{sec:algorithm}

We note that the choice $\Gamma_1 \equiv 0$
gives the standard (physicists) Langevin equation
\begin{equation}
\label{eq:sop}
{d^2 q\over dt}+\Gamma_2{dq\over dt}+\bigl(q+\cC D\Psi(q)\bigr)=\sqrt{2\Gamma_2 \cC}{dW_2\over dt}.
\end{equation}
In this section we describe an MCMC method designed to
sample the measure $\Pi$ given by \eqref{eq:target2} and hence, by
marginalization, the measure $\pi$ given by \eqref{eqn:targmeas0}. 
The method is based on discretization of the second order Langevin
equation \eqref{eq:sop}, written as the hypo-elliptic
first order equation \eqref{eqn:g10} below.
In the finite dimensional setting a method closely related
to the one that we introduce was proposed in \cite{H91}; however
we will introduce different Hamiltonian solvers which are tuned
to the specific structure of our measure, in particular to the
fact that it is defined via density with respect to a Gaussian. 
We will be particularly interested in choices of parameters
in the algorithm which ensure that the output (suitability
interpolated to continuous time) behaves like \eqref{eq:sop}
whilst, as is natural for MCMC methods, exactly preserving the invariant
measure. This perspective on discretization of
the (physicists) Langevin equation in finite dimensions
was introduced in \cite{bou2011patch,Bou}.

 In position/velocity
coordinates, and using \eqref{eq:F}, \eqref{eq:maininf}  becomes 
\begin{align}\label{eqn:g10}
\begin{split}
{dq\over dt} &= v\,, \\
{dv\over dt} &= -F(q)-\Gamma_2v+\sqrt{2\Gamma_2 \cC}{dW_2\over dt}.
\end{split}
\end{align}
The algorithm we use is based on splitting \eqref{eqn:g10} into an Ornstein-Uhlenbeck (OU) process and a Hamiltonian ODE. The OU process is
\begin{align}\label{eqn:ousplit}
\begin{split}
{dq\over dt} &= 0\,, \\
{dv\over dt} &= -\Gamma_2v+\sqrt{2\Gamma_2 \cC}{dW_2\over dt},
\end{split}
\end{align}
and the Hamiltonian ODE is given by
\begin{align}\label{eqn:Hsplit}
\begin{split}
{dq\over dt} &= v\,, \\
{dv\over dt} &= -F(q) \;.
\end{split}
\end{align}
The solution of the OU process \eqref{eqn:ousplit} is denoted by $(q(t), v(t))=\Theta_0(q(0),v(0); \xi^t)$; here $\xi^t$ is a  mean zero Gaussian 
random variable with covariance operator 
$\cC\bigl(I-\exp(-2t\Gamma_2 )\bigr).$
Notice that the dynamics given by both \eqref{eqn:ousplit} 
and by \eqref{eqn:Hsplit}  preserve the 
target measure $\Pi$ given in \eqref{eq:target2}. This naturally suggests constructing an
 algorithm based on alternating the above two dynamics. However,
note that whilst \eqref{eqn:ousplit} can
 be solved exactly, \eqref{eqn:Hsplit} requires 
a further numerical approximation.
If the numerical approximation is based on a volume-preserving 
and time-reversible numerical integrator, then the accept-reject
criterion for the resulting MCMC algorithm can be easily 
expressed in terms of the energy differences in $\Hf$. 
A flow $\phi^t$  on $\RR^{2N}$ is said to be time-reversible if 
$\phi^t(q(0),v(0))=(q(t), v(t))$ implies 
$\phi^t(q(t), - v(t))=(q(0), -v(0))$.  Defintion of time-reversible and discussion of the
roles of time-reversible and volume-preserving integrators may be found in
\cite{sanz1994numerical}. 
 
To construct volume-preserving and time-reversible integrators
the Hamiltonian integration will be performed by a further 
splitting of \eqref{eqn:Hsplit}.
The usual splitting for the widely used Verlet method is via the velocity and the position
coordinates \cite{H91}. Motivated by our infinite dimensional setting, we 
replace the Verlet integration 
by the splitting method proposed in \cite{BPSS11};
this leads to an algorithm which is exact (no rejections)
in the purely Gaussian case where $\Psi \equiv 0.$
The splitting method proposed in  \cite{BPSS11}
is via the linear and nonlinear parts of the problem, 
leading us to consider the two equations
\begin{align}\label{rotation}
{dq\over dt} &= v\, ,\quad 
{dv\over dt} = -q,
\end{align}
with solution denoted as $(q(t), v(t)) = \sR^t (q(0), v(0))$; and 
\begin{align}\label{spl2}
 {dq\over dt} = 0 \;, \quad {dv\over dt} = - \cC D\Psi(q) ,
\end{align}
with solution denoted as
$(q(t), v(t)) = \Theta^t_1(q(0), v(0))$.
We note that the map
\be
\chi^t &= \Theta^{t/2}_1 \circ \sR^t \circ \Theta^{t/2}_1
\ee
is a volume-preserving and time-reversible second order
accurate approximation of the Hamiltonian ODE \eqref{eqn:Hsplit}.
We introduce the notation
\be 
\chi^t_{\tau} &= (\chi^t \circ \cdots \circ \chi^t), \quad \Big\lfloor \frac{\tau}{t}  
\Big\rfloor \; \,\mbox{times}
\ee to denote integration, using this method, up to time ${\tau}$.
This integrator can be made to preserve the measure $\Pi$ 
if appended with a suitable accept-reject rule as detailed below.
On the other hand the stochastic map 
$\Theta^t_0$ preserves $\Pi$ since it leaves $q$ invariant 
and since the OU process,
which is solved exactly, preserves $\Pi_0.$ 
We now take this idea to define our MCMC method. The infinite dimensional Hilbert space $\mathcal{H}^s \times \mathcal{H}^s$ in which the chain is constructed will be properly defined in the next section. Here we focus on the algorithm, which will be explained in more details and analyzed in Section 
\ref{sec:difflim}.

Define the operation $'$ so that
$v'$ is the velocity component of $\Theta^\delta_0(q,v).$
The preceding  considerations suggest that from point 
$(q^0,v^0) \in \cHHl$ we make the  proposal 
 \be
 (q_*^1,v_*^1) = \chi^h_{\tau} \circ \Theta^\delta_0 \, (q^0,v^0) 
 \ee 
and that the acceptance probability is given by
\be
\alpha(x^0,\xi\de): = 1 \wedge \exp \Bigl(\Hf\bigl(q^0,(v^0)'\bigr) - \Hf\bigl(q_*^1,v_*^1\bigr)\Bigr),
\ee 
where 
\be \label{haminf}
\Hf(q,v)=\frac{1}{2}\langle q, \C^{-1}q\rangle + \frac{1}{2}\langle v, \C^{-1}v\rangle + \Psi(q) \, ,
\ee
$\langle \cdot , \cdot \rangle$ denoting scalar product in $\h$.
One step of the resulting MCMC method is then defined
by setting
\begin{align}\label{eqn:alg}
\begin{split}
(q^1, v^1) &= (q_*^1,v_*^1)\,\,\,\,\, \qquad \mathrm{with \,probability}\, \,\alpha(x^0, \xi\de) \\
&= (q^0, -(v^0)') \,\quad \mathrm{otherwise}.
\end{split}
\end{align}
We will make further comments on this algorithm and on the expression \eqref{haminf} for the Hamiltonian   in Section \ref{sec:difflim}, see Remark 
\ref{rem:wellposaccprob}. Here it suffices simply to
note that whilst $H$ will be almost surely infinite, the energy
difference is well-defined for the algorithms we employ. 
We stress that when the proposal is rejected the chain does not remain in $(q^0, v^0)$ but it moves to 
$(q^0, -(v^0)')$; that is,  the position coordinate stays the same while the velocity coordinate is first evolved according to 
\eqref{eqn:ousplit} and then the sign is flipped. This flipping of the sign, needed to preserve reversibility, 
leads to some of the main technical differences with respect to \cite{PST13}; see Remark \ref{rem:flip}.
For the finite dimensional case with Verlet integration 
the form of the accept-reject mechanism and, in particular, the
sign-reversal in the velocity, was first derived in \cite{H91}
and is discussed in further detail in section 5.3 of \cite{neal2010mcmc}.
The algorithm \eqref{eqn:alg}  preserves $\Pi$ and we refer to it
as the \sol algorithm.  
Recalling that $v'$ denotes the velocity component of $\Theta_0^{\delta}(q,v)$,  we can equivalently use the notations 
$\alpha(x, \xi^{\delta})$ and $\alpha(q,{v}')$, for $x=(q,v)$ (indeed, by the definition of $\Theta_0\de$, $v'$ depends on $\xi\de$). With this in mind, the pseudo-code for the SOL-HMC is as follows. 

\rule{16cm}{.1pt}
\medskip

SOL-HMC in $\h^s$:


\bigskip

\begin{enumerate}
\item Pick  $(q^0, v^0) \in \h^s \times \h^s$ and set $k=0$;
\item given $(q^k, v^k)$, define $(v^k)'$ to be the $v$-component of  $\Theta^{\delta}_0(q^k,v^k)$ and calculate the proposal  $$(q^{k+1}_{*}, v^{k+1}_{*})= \chi_{\tau}^h  (q^k, (v^k)')\,;$$
\item define the acceptance probability $\alpha(q^k, (v^k)')$;
\item set $(q^{k+1}, v^{k+1})= (q^{k+1}_{*}, v^{k+1}_{*})$ with probability $\alpha(q^k, (v^k)')$; \\
otherwise set  $(q^{k+1}, v^{k+1})= (q^k, -(v^k)')$;
\item set $k \rightarrow k+1$ and go to (2). 
\end{enumerate}

\rule{16cm}{.1pt}

\begin{theorem} \label{t:alg}
Let Assumption \ref{ass:1} hold.
For any $\delta, h, {\tau} > 0$, the Markov chain defined by \eqref{eqn:alg} is
invariant with respect to $\Pi$ given by \eqref{eq:target2}.
\end{theorem}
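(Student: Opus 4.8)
The plan is to decompose one step of the \sol chain as a composition $P=P_0\,P_1$ of two Markov kernels on $\cHHl$ and to show separately that each preserves $\Pi$; since $\Pi P_0=\Pi$ and $\Pi P_1=\Pi$ imply $\Pi P=(\Pi P_0)P_1=\Pi$, this gives the theorem. Here $P_0$ is the (stochastic) kernel that applies the exact OU solve, i.e.\ it sends $x=(q,v)$ to $(q,v')$ with $v'$ the velocity component of $\Theta^{\delta}_0(q,v)$; and $P_1$ is the ``Metropolised Hamiltonian step with velocity flip'': from $x=(q,v)$ it proposes the deterministic move $\psi(x):=\chi^h_{\tau}(x)$, accepts it with probability $\alpha(x)=1\wedge\exp\bigl(\Hf(x)-\Hf(\psi(x))\bigr)$, and otherwise moves to $Sx:=(q,-v)$. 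The pseudocode is exactly $P=P_0P_1$ applied in that order.

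First I would check that $P_0$ preserves $\Pi$. The map $\Theta^{\delta}_0$ leaves the $q$–coordinate fixed and evolves $v$ by the OU semigroup of $dv=-\Gamma_2 v\,dt+\sqrt{2\Gamma_2\cC}\,dW_2$; since $\Gamma_2$ commutes with $\cC$, the associated Lyapunov equation is solved by $\cC$, so $N(0,\cC)$ is stationary for that semigroup and hence $P_0$ preserves $\Pi_0=N(0,\cC)\otimes N(0,\cC)$. Because $\frac{d\Pi}{d\Pi_0}(q,v)\propto e^{-\Psi(q)}$ depends on $q$ only and $P_0$ acts trivially on $q$, one has $P_0\bigl(f\,e^{-\Psi(q)}\bigr)=e^{-\Psi(q)}\,P_0 f$ for bounded $f$; integrating against $\Pi_0$ and using $\Pi_0$–invariance of $P_0$ yields $\int P_0 f\,d\Pi=\int f\,d\Pi$.

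Next, for $P_1$ I would run the generalised–HMC invariance computation, which rests on three facts: (i) $S$ is an involution with $\Hf\circ S=\Hf$ and $S_*\Pi=\Pi$ (immediate from \eqref{haminf} and the product structure of $\Pi_0$, since $v\mapsto -v$ preserves $N(0,\cC)$); (ii) $\psi=\chi^h_{\tau}$ is time-reversible, $S\psi S=\psi^{-1}$, which follows from time-reversibility of the elementary step $\chi^h=\Theta^{h/2}_1\circ\sR^h\circ\Theta^{h/2}_1$ via $S(\chi^h)^n S=(S\chi^h S)^n=((\chi^h)^{-1})^n$; and (iii) $\psi$ transforms $\Pi$ into an equivalent measure with $\frac{d(\psi_*\Pi)}{d\Pi}(y)=\exp\bigl(\Hf(y)-\Hf(\psi^{-1}(y))\bigr)$. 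Granting (iii), for bounded $f$ write $\int P_1 f\,d\Pi=\int\alpha\,(f\circ\psi)\,d\Pi+\int(f\circ S)\,d\Pi-\int\alpha\,(f\circ S)\,d\Pi$; the middle term equals $\int f\,d\Pi$ by (i), and the pointwise identity $\alpha(\psi^{-1}(y))\,\frac{d(\psi_*\Pi)}{d\Pi}(y)=\alpha(Sy)$ — obtained by substituting (iii), then using $\psi^{-1}=S\psi S$, $\psi\circ S\circ\psi\circ S=\mathrm{id}$, $\Hf\circ S=\Hf$, and the elementary relation $(1\wedge e^{a})e^{-a}=1\wedge e^{-a}$ — gives $\psi_*(\alpha\,\Pi)=(\alpha\circ S)\,\Pi$, hence $\int\alpha\,(f\circ\psi)\,d\Pi=\int(\alpha\circ S)\,f\,d\Pi=\int\alpha\,(f\circ S)\,d\Pi$. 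The first and third terms therefore cancel and $\int P_1 f\,d\Pi=\int f\,d\Pi$. In finite dimensions property (iii) is just volume preservation of $\chi^h_{\tau}$ together with $\Hf$ being the negative log density.

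The hard part — and the only place where infinite dimensionality genuinely bites — is (iii): in infinite dimensions $\Hf$ is $\Pi$–almost surely infinite and there is no Lebesgue measure, so ``$\psi$ volume preserving'' and the energy difference $\Hf(x)-\Hf(\psi(x))$ must be made sense of through the Gaussian reference $\Pi_0$. The rotation $\sR^h$ preserves $\Pi_0$ exactly — it is an orthogonal transformation of the two i.i.d.\ copies of $N(0,\cC)$, equivalently the Hamiltonian flow of $\tfrac12\langle q,\cC^{-1}q\rangle+\tfrac12\langle v,\cC^{-1}v\rangle$, which it leaves invariant — and so contributes nothing to the Radon–Nikodym factor. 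The shear $\Theta^{h/2}_1$ shifts $v$ by $-\tfrac h2\,\cC D\Psi(q)$, and by Assumption \ref{ass:1} and Lemma \ref{lem:lipschitz+taylor} this shift lies in the Cameron–Martin space $\mathrm{Im}(\cC^{1/2})$ for $\Pi_0$–a.e.\ $q$, so by the Cameron–Martin theorem $\Theta^{h/2}_1$ changes $\Pi_0$ into an equivalent measure with an a.s.\ finite log-density (a Paley–Wiener integral in $v$ plus a $\langle D\Psi(q),\cC D\Psi(q)\rangle$ term). Chaining these contributions along $\chi^h_{\tau}=(\Theta^{h/2}_1\circ\sR^h\circ\Theta^{h/2}_1)^{\lfloor\tau/h\rfloor}$ and combining with the increment of $\Psi$ produces a well-defined finite quantity, which one identifies with the formal energy difference $\Hf\bigl(q^0,(v^0)'\bigr)-\Hf\bigl(q^1_*,v^1_*\bigr)$ in \eqref{eqn:alg}; this is the content of Remark \ref{rem:wellposaccprob}, and it is exactly what makes $\alpha$ the correct acceptance ratio and validates the computation of the previous paragraph.
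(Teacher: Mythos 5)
Your proof is correct, but it takes a genuinely different route from the paper's. Both arguments share the first reduction: one step of the chain factors as the exact OU solve $P_0$ (which preserves $\Pi$ for the reasons you give) followed by the Metropolised Hamiltonian step with velocity flip $P_1$, so everything hinges on $\Pi P_1=\Pi$. The paper establishes this last fact by finite-dimensional approximation: it defines truncated integrators $\chi^t_N=\Theta^{t/2}_{N,1}\circ\sR^t_N\circ\Theta^{t/2}_{N,1}$ on the span of the first $N$ eigenfunctions, invokes the classical finite-dimensional generalized-HMC invariance there, shows $\chi^t_N\to\chi^t$ pointwise, and passes to the limit by dominated convergence, exactly as in the proof of Theorem \ref{t:flowprev}. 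You instead run the generalized Metropolis--Hastings computation directly in infinite dimensions: the involution $S$, the time-reversibility $S\psi S=\psi^{-1}$, and the identification of $\frac{d(\psi_*\Pi)}{d\Pi}$ with $\exp\bigl(\Hf(y)-\Hf(\psi^{-1}(y))\bigr)$, the latter made rigorous by observing that $\sR^h$ preserves $\Pi_0$ exactly while each shear $\Theta^{h/2}_1$ is a Cameron--Martin shift of the $v$-marginal (admissible since $\|\cC^{1/2}\nabla\Psi(q)\|\lesssim 1+\|q\|_s$ by Lemma \ref{psiq-psiq*}). Your algebra for the cancellation, including the identity $(1\wedge e^{a})e^{-a}=1\wedge e^{-a}$ and the use of $\Hf\circ S=\Hf$, is sound. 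The trade-off: your route is more self-contained and pinpoints precisely where infinite dimensionality enters (quasi-invariance under the shear, which is essentially the computation of $\Delta\Hf$ quoted from \cite{BPSS11} in Remark \ref{rem:wellposaccprob}), whereas the paper's route avoids defining any infinite-dimensional Radon--Nikodym derivative at the price of the approximation and convergence lemmas. One caveat: the step you label (iii) is stated at a fairly high level ("chaining these contributions produces a well-defined finite quantity, which one identifies with the formal energy difference"); to be fully rigorous this chaining along all $\lfloor\tau/h\rfloor$ sub-steps should be written out, but this is exactly the bookkeeping carried out in \cite{BPSS11}, so it is a matter of exposition rather than a gap.
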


\begin{proof}
See Appendix A. 
\end{proof}
\begin{remarks} 
\label{rem:need}

We first note that if
$\delta \rightarrow \infty$ then 
the algorithm \eqref{eqn:alg} is that introduced in the
paper \cite{BPSS11}. From this it follows that, 
if  $\delta=\infty$ and ${\tau} = h$, then the algorithm
is simply the funtion-space Langevin introduced in
\cite{Besk:etal:08}.

Secondly we mention that, in the numerical experiments reported later,
we will choose $\Gamma_2=I$. The solution of the OU process
\eqref{eqn:ousplit} for $v$ is thus given as
\begin{equation}
v(\delta)=(1-\iota^2)^{\frac12}v(0)+\iota w
\label{eq:preserve}
\end{equation} 
where $w \sim N(0,C)$ and $e^{-2\delta}=(1-\iota^2).$
The numerical experiments will be described in terms
of the parameter $\iota$ rather than $\delta.$
\end{remarks}

\section{Preliminaries} 
\label{sec:2}
In this section we detail the notation and the assumptions (Section \ref{subsec:notation} 
and Section \ref{sec:assumptions}, respectively) that we will use in the rest of the paper.  
\subsection{Notation}\label{subsec:notation}
Let $\Bigl(\h, \bra{\cdot, \cdot}, \|\cdot\|\Bigr)$ 
denote a separable Hilbert space of real valued functions with
the canonical norm derived from the inner-product. 
Let $\C$ be a  positive, trace class operator on $\h$ 
and $\{\phi_j,\lambda^2_j\}_{j \geq 1}$ be the eigenfunctions
and eigenvalues of $\C$ respectively, so that
\begin{equs}
\C\phi_j = \lambda^2_j \,\phi_j
\qquad \text{for} \qquad 
j \in \mathbb{N}.
\end{equs}
We assume a normalization under which $\{\phi_j\}_{j \geq 1}$ 
forms a complete orthonormal basis in $\h$.
For every $x \in \h$ we have the representation
$x = \sum_{j} \; x_j \phi_j$, where $x_j=\langle x,\phi_j\rangle.$
Using this notation, we define Sobolev-like spaces $\h^r, r \in \bbR$, with the inner products and norms defined by
\begin{equs}
\langle x,y \rangle_r = \sum_{j=1}^\infty j^{2r}x_jy_j
\qquad \text{and} \qquad
\|x\|^2_r = \sum_{j=1}^\infty j^{2r} \, x_j^{2}.
\end{equs}
Notice that $\h^0 = \h$. Furthermore
$\h^r \subset \h \subset \h^{-r}$ for any $r >0$.  
The Hilbert-Schmidt norm $\|\cdot\|_\C$ is defined as
\be
\|x\|^2_\C =\|\C^{-\frac12}x\|^2= \sum_{j=1}^{\infty} \lambda_j^{-2} x_j^2.
\ee
For $r \in \mathbb{R}$, 
let  $Q_r : \h \mapsto \h$ denote the operator which is
diagonal in the basis $\{\phi_j\}_{j \geq 1}$ with diagonal entries
$j^{2r}$, \textit{i.e.},
\be
Q_r \,\phi_j = j^{2r} \phi_j
\ee
so that $Q^{\frac12}_r \,\phi_j = j^r \phi_j$. 
The operator $Q_r$ 
lets us alternate between the Hilbert space $\h$ and the interpolation 
spaces $\h^r$ via the identities:
\be 
\bra{x,y}_r = \bra{ Q^{\frac12}_r x,Q^{\frac12}_r y } 
\qquad \text{and} \qquad
\|x\|^2_r =\|Q^{\frac12}_r x\|^2. 
\end{equs}
Since $\|Q_r^{-1/2} \phi_k\|_r = \|\phi_k\|=1$, 
we deduce that $\{Q^{-1/2}_r \phi_k \}_{k \geq 1}$ forms an 
orthonormal basis for $\h^r$. A function $y\sim N(0,\cC)$ can be  expressed as
\begin{align}\label{y1}
y=\sum_{j=1}^{\infty}
\lambda_j \rho_j \varphi_j  \qquad \mbox{with } \qquad \rho_j\stackrel{\mathcal{D}}{\sim}N(0,1) \,\,\mbox{i.i.d};
\end{align}
if $\sum_j \lambda_j^2 j^{2r}<\infty$ then $y$ can be equivalently written as
\begin{align}\label{eqn:y2}
y=\sum_{j=1}^{\infty}
(\lambda_j j^r) \rho_j (Q_r^{-1/2} \varphi_j)  \qquad \mbox{with } \qquad \rho_j\stackrel{\mathcal{D}}{\sim}N(0,1) \,\,\mbox{i.i.d}.
\end{align}
For a positive, self-adjoint operator $D : \h \mapsto \h$, its trace in $\h$ is defined as
\begin{equs}
\tr_{\h}(D) \;\eqdef\; \sum_{j=1}^\infty \bra{  \phi_j, D  \phi_j }.
\end{equs}
We stress that in the above $ \{ \varphi_j \}_{j \in \mathbb{N}} $ is an orthonormal basis for $(\h, \langle \cdot, \cdot \rangle)$. Therefore if 
$\tilde{D}:\h^r \rightarrow \h^r$, its trace in $\h^r$ is
\begin{equs}
\tr_{\h^r}(\tilde{D}) \;\eqdef\; \sum_{j=1}^\infty \bra{ Q_r^{-\frac{1}{2}} \phi_j, \tilde{D} Q_r^{-\frac{1}{2}} \phi_j }_r.
\end{equs}
Since $\tr_{\h^r}(\tilde{D})$ does not depend on the orthonormal basis,
the operator $\tilde{D}$ is said to be trace class in $\h^r$ if $\tr_{\h^r}(\tilde{D}) < \infty$ for
some, and hence any, orthonormal basis of $\h^r$. 

Because $\cC$ is defined on $\cH$, the covariance operator
\begin{equation}\label{Cs}
\cC_r=Q_r^{1/2} \cC Q_r^{1/2}
\end{equation}
is defined on $\h^r$. With this definition, 
for all the values of $r$ such that $\tr_{\h^r}(\cC_r)=\sum_j \lambda_j^2 j^{2r}< \infty$, we can think of $y$ as a mean zero Gaussian random variable with covariance operator $\cC$ in $\h$ and $\cC_r$ in $\h^r$ (see \eqref{y1} and \eqref{eqn:y2}).  In the same way, 
if $\tr_{\h^r}(\cC_r)< \infty $ then 
$$
B_2(t)= \sum_{j=1}^{\infty} \lambda_j \beta_j(t) \varphi_j= \sum_{j=1}^{\infty}\lambda_j j^r\beta_j(t) \hat{\varphi}_j,
$$
with $\{ \beta_j(t)\}_{j\in \mathcal{N}}$  a collection of  i.i.d. standard Brownian motions on $\mathbb{R}$, 
can be equivalently  understood as an $\h$-valued $\cC$-Brownian motion or as an  $\h^r$-valued $\cC_r$-Brownian motion.  
In the next section we will need the cylindrical Wiener process $W(t)$ which is
defined via the sum 
$$
W(t):= \sum_{j=1}^{\infty}\beta_j(t) \varphi_j.
$$ 
This process is ${\mathcal H}^r-$valued for any $r<-\frac12.$
Observe now that if $\{\hat{\varphi}_j\}_{j\in \mathbb{N}}$ is an orthonormal  basis of $\mathcal{H}^r$
 then, denoting $\mathcal{H}^r\times \mathcal{H}^r \ni \hat{\varphi}^1_j=( \hat{\varphi}_j,0)$ and 
$\mathcal{H}^r\times \mathcal{H}^r \ni \hat{ \varphi}^2_j=(0,\hat{ \varphi}_j)$, 
 $\mathfrak{F}=\{ \hat{ \varphi}^1_j, \hat{\varphi}^2_j \}_{j\in \mathbb{N}}$  is an orthonormal basis for 
$\mathcal{H}^r\times \mathcal{H}^r$.  Let  $\mathfrak{C}_r: \mathcal{H}^r\times \mathcal{H}^r \rightarrow \mathcal{H}^r\times \mathcal{H}^r$ be the diagonal operator such that 
\be
\mathfrak{C}_r\hat{\varphi}^1_j =(0,0),  \qquad 
\mathfrak{C}_r\hat{\varphi}^2_j=j^{2r}\lambda_j^2\hat{\varphi}^2_j=(0,\cC_r\hat{\varphi}_j)\qquad 
\forall j\in \mathbb{N}
\ee
and $\tilde{\mathfrak{C}}_r: \mathcal{H}^r\times \mathcal{H}^r \rightarrow \mathcal{H}^r\times \mathcal{H}^r$ be the diagonal operator such that 
\begin{align}\label{mfcs}
\tilde{\mathfrak{C}}_r\hat{\varphi}^1_j =j^{2r}\lambda_j^2 \hat{\varphi}_j^1=(\cC_r\hat{\varphi}_j,0), \qquad 
\tilde{\mathfrak{C}}_r\hat{\varphi}^2_j =j^{2r}\lambda_j^2 \hat{\varphi}_j^2=(0,\cC_r\hat{\varphi}_j) \qquad 
\forall j\in \mathbb{N}.
\end{align}
Consistently,  $B(t):=(0, B_2(t))$ will denote an  $\h^r \times \h^r$ valued Brownian motion with covariance operator $\mathfrak{C}_r$ and 
$\tilde{B}(t):=(B_1(t),B_2(t))$ will denote a $\h^r \times \h^r$ valued Brownian motion with covariance operator $\tilde{\mathfrak{C}}_r$. In other words, $B_1(t)$ and $B_2(t)$ are independent $\h^r$-valued $\cC_r$-Brownian motions.

Throughout we use the following notation.
\begin{itemize}
\item Two sequences of non-negative real numbers $\{\alpha_n\}_{n \geq 0}$ and $\{\beta_n\}_{n \geq 0}$ satisfy $\alpha_n \lesssim \beta_n$ 
if there exists a constant $K>0$ satisfying $\alpha_n \leq K \beta_n$ for all $n \geq 0$.
The notations $\alpha_n \asymp \beta_n$ means that $\alpha_n \lesssim \beta_n$ and $\beta_n \lesssim \alpha_n$.

\item 
Two sequences of non-negative real functions $\{f_n\}_{n \geq 0}$ and $\{g_n\}_{n \geq 0}$ defined on the same set $\Omega$
satisfy $f_n \lesssim g_n$ if there exists a constant $K>0$ satisfying $f_n(x) \leq K g_n(x)$ for all $n \geq 0$
and all $x \in \Omega$.
The notations $f_n \asymp g_n$ means that $f_n \lesssim g_n$ and $g_n \lesssim f_n$.

\item
The notation $\EE_x \big[ f(x,\xi) \big]$ denotes expectation 
with variable $x$ fixed, while the randomness present in $\xi$
is averaged out.
\end{itemize}

Also, let $\otimes_{\h^r}$
denote the outer product operator in $\h^r$
defined by
\begin{equs}
(x \otimes_{\h^r} y) z \eqdef \bra{ y, z}_r \,x 
\qquad \qquad 
\forall x,y,z \in \h^r.
\end{equs}
For an operator $A: \h^r \mapsto \h^l$, we denote its operator norm 
by $\opnm{\cdot}{r}{l}$ defined by
\begin{equs}
\opnm{A}{r}{l} \eqdef \sup_{\|x\|_r=1} \|A x\|_l.
\end{equs}
For self-adjoint $A$ and $r=l=0$ this is, of course, the spectral radius of $A$.
Finally, in the following we will consider the product space $\mathcal{H}^r \times \mathcal{H}^r$. The norm of 
$w=(w_1, w_2)\in \mathcal{H}^r \times \mathcal{H}^r$ is 
$$
\|w\|_{r\times r}^2 := \|w_1\|_{r}^2+ \|w_2\|_{r}^2. 
$$

\subsection{Assumptions} \label{sec:assumptions}
In this section we describe the assumptions on the covariance 
operator $\cC$ of the Gaussian measure $\pi_0 \dist \Normal(0,\cC)$ and the functional $\Psi$. We fix a distinguished exponent 
$s>0$ and assume that $\Psi: \mathcal{H}^s\rightarrow \RR$ and $\tr_{\mathcal{H}^s}(\mathcal{C}_s)<\infty$.
For each $x \in \h^s$ the derivative $\nabla \Psi(x)$
is an element of the dual $(\h^s)^*$ of $\h^s$ (dual with respect to the topology induced by the norm in $\h$), comprising 
the linear functionals on $\h^s$.
However, we may identify $(\h^s)^* = \h^{-s}$ and view $\nabla \Psi(x)$
as an element of $\h^{-s}$ for each $x \in \h^s$. With this identification,
the following identity holds:
\begin{equs}
\| \nabla \Psi(x)\|_{\mathcal{L}(\h^s,\RR)} = \| \nabla \Psi(x) \|_{-s}; 
\end{equs}
furthermore, the second derivative $\partial^2 \Psi(x)$ 
can be identified with an element 
of $\mathcal{L}(\h^s, \h^{-s})$.
To avoid technicalities we assume that $\Psi(x)$ is quadratically bounded, 
with first derivative linearly bounded and second derivative globally 
bounded. Weaker assumptions could be dealt with by use of stopping time 
arguments. 
\begin{assumptions} \label{ass:1}
The functional $\Psi$, covariance operator $\cC$ and the operators $\Gamma_1, \Gamma_2$ satisfy the following assumptions.
\begin{enumerate}
\item {\bf Decay of Eigenvalues $\lambda_j^2$ of $\cC$:}
there exists a constant $\kappa > \frac{1}{2}$ such that
\begin{equs}
\lambda_j \asymp j^{-\kappa}.
\end{equs}

\item {\bf Domain of $\Psi$:}
there exists an exponent $s \in [0, \kappa - 1/2)$ such 
that $\Psi$ is defined everywhere on $\h^s$.

\item {\bf Size of $\Psi$:} 
the functional $\Psi:\h^s \to \RR$ satisfies the growth conditions
\begin{equs}
0 \quad\leq\quad \Psi(x) \quad\lesssim\quad 1 +  \|x\|_s^2 . 
\end{equs}

\item {\bf Derivatives of $\Psi$:} 
The derivatives of $\Psi$ satisfy
\begin{align}\label{der-s}
\| \nabla \Psi(x)\|_{-s} \quad\lesssim\quad 1 + \|x\|_s  
\qquad \text{and} \qquad
\opnm{\partial^2 \Psi(x)}{s}{-s} \quad\lesssim\quad 1.
\end{align}

\item {\bf Properties of the $\Gamma_i$:} The operators
$\Gamma_1, \Gamma_2$ commute with $\cC$ and are bounded 
linear operators from $\h^s$ into itself.
\end{enumerate}
\end{assumptions}

\begin{remark}
The condition $\kappa > \frac{1}{2}$ ensures that $\tr_{\h^r}(\cC_r)  < \infty$
for any $r < \kappa - \frac{1}{2}$: this implies that
$\pi_0(\h^r)=1$ 
for any $\tau > 0$ and  $r < \kappa - \frac{1}{2}$.
\label{rem:one}
\end{remark}

\begin{remark}
The functional $\Psi(x)  = \frac{1}{2}\|x\|_s^2$ is defined on $\h^s$ and its derivative at $x \in \h^s$
is given by $\nabla \Psi(x) = \sum_{j \geq 0} j^{2s} x_j \phi_j \in \h^{-s}$ with  
$\|\nabla \Psi(x)\|_{-s} = \|x\|_s$. The second derivative $\partial^2 \Psi(x) \in \mathcal{L}(\h^s, \h^{-s})$ 
is the linear operator that maps $u \in \h^s$ to $\sum_{j \geq 0} j^{2s} \bra{u,\phi_j} \phi_j \in \h^{-s}$:
its norm satisfies $\| \partial^2 \Psi(x) \|_{\mathcal{L}(\h^s, \h^{-s})} = 1$ for any $x \in \h^s$.
\end{remark}

\noindent
The Assumptions \ref{ass:1} ensure that the functional $\Psi$ behaves 
well in a sense made precise in the following lemma.

\begin{lemma} \label{lem:lipschitz+taylor}
Let Assumptions \ref{ass:1} hold.
\begin{enumerate}
\item 
The function $F(x)$ given by \eqref{eq:F} is globally 
Lipschitz on $\h^s$:
\begin{equs} 
\|F(x) - F(y)\|_s \;\lesssim \; \|x-y\|_s
\qquad \qquad \forall x,y \in \h^s.
\end{equs}

\item
The second order remainder term in the Taylor expansion of $\Psi$ satisfies
\begin{align} \label{e.taylor.order2}
\big| \Psi(y)-\Psi(x) - \bra{\nabla \Psi(x), y-x} \big| \lesssim \|y-x\|_s^2
\qquad \qquad \forall x,y \in \h^s.
\end{align}
\end{enumerate}
\end{lemma}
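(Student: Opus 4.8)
The plan is to derive both statements directly from the structural assumptions on $\Psi$ and $\cC$ in Assumptions \ref{ass:1}, using the fact that $\cC$ maps $\h^{-s}$ boundedly into $\h^s$. The key preliminary observation is the following: since $\lambda_j \asymp j^{-\kappa}$, the operator $\cC$ acts on the $\phi_j$ by multiplication by $\lambda_j^2 \asymp j^{-2\kappa}$, and $\h^s$-to-$\h^{-s}$ norms differ by factors of $j^{2s}$, so $\cC$ gains $2\kappa$ powers of $j$ while we only need to gain $2s$; since $s < \kappa - 1/2 < \kappa$, the operator $\cC: \h^{-s} \to \h^s$ is bounded (indeed compact). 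Concretely, for $w = \sum_j w_j \phi_j$,
\begin{equs}
\|\cC w\|_s^2 = \sum_j j^{2s} \lambda_j^4 w_j^2 \lesssim \sum_j j^{2s} j^{-4\kappa} w_j^2 \lesssim \sum_j j^{-2s} w_j^2 = \|w\|_{-s}^2,
\end{equs}
where the middle inequality uses $j^{2s} j^{-4\kappa} = j^{-2s} \cdot j^{4s - 4\kappa} \lesssim j^{-2s}$ because $4s - 4\kappa < 0$. Thus $\|\cC\|_{\mathcal{L}(\h^{-s},\h^s)} < \infty$.

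For part (1), I would write $F(x) - F(y) = (x - y) + \cC\bigl(\nabla\Psi(x) - \nabla\Psi(y)\bigr)$ and bound the two terms separately. The first term contributes exactly $\|x - y\|_s$. For the second term, apply the boundedness of $\cC: \h^{-s} \to \h^s$ just established to get $\|\cC(\nabla\Psi(x) - \nabla\Psi(y))\|_s \lesssim \|\nabla\Psi(x) - \nabla\Psi(y)\|_{-s}$. Then use the mean value theorem / fundamental theorem of calculus in the form $\nabla\Psi(x) - \nabla\Psi(y) = \int_0^1 \partial^2\Psi(y + t(x-y))(x-y)\, dt$, together with the uniform bound $\opnm{\partial^2\Psi(\cdot)}{s}{-s} \lesssim 1$ from Assumption \ref{ass:1}(4), to conclude $\|\nabla\Psi(x) - \nabla\Psi(y)\|_{-s} \lesssim \|x - y\|_s$. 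Combining, $\|F(x) - F(y)\|_s \lesssim \|x - y\|_s$.

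For part (2), this is just the quantitative form of the second-order Taylor remainder. Write $\Psi(y) - \Psi(x) - \bra{\nabla\Psi(x), y - x} = \int_0^1 (1-t)\, \bra{\partial^2\Psi(x + t(y-x))(y-x), y-x}\, dt$ (integral form of the remainder). Each integrand is bounded in absolute value by $\opnm{\partial^2\Psi(x + t(y-x))}{s}{-s} \|y - x\|_s^2 \lesssim \|y - x\|_s^2$ using the duality pairing $|\bra{\ell, u}| \le \|\ell\|_{-s}\|u\|_s$ and again Assumption \ref{ass:1}(4). Integrating over $t \in [0,1]$ gives the claimed bound.

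The only genuinely delicate point — and the one I would be careful to state precisely rather than the routine estimates above — is the justification that the Taylor/mean-value integral representations are valid in this infinite-dimensional, dual-space setting: $\Psi$ is a real-valued functional on $\h^s$ whose first derivative lives in $\h^{-s}$ and whose second derivative is an operator in $\mathcal{L}(\h^s, \h^{-s})$, so one should check that $t \mapsto \Psi(x + t(y-x))$ is $C^2$ on $[0,1]$ with the stated derivatives, which follows from the assumed (Fréchet) differentiability of $\Psi$ together with the continuity of $\partial^2\Psi$ implicit in the global bound. Once that regularity is in hand, everything reduces to the scalar-valued calculus identities applied to this one-variable function, and the boundedness of $\cC: \h^{-s} \to \h^s$ carries the rest.
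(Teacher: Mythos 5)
Your proof is correct, and it is essentially the standard argument: the paper itself gives no proof but defers to \cite{Matt:Pill:Stu:11,PST13}, where the lemma is established exactly as you do it, via the boundedness of $\cC:\h^{-s}\to\h^s$ (from $\lambda_j\asymp j^{-\kappa}$ and $s<\kappa$) combined with the uniform bound on $\partial^2\Psi$ in $\mathcal{L}(\h^s,\h^{-s})$ and the integral forms of the mean value theorem and Taylor remainder. Your closing remark about verifying that $t\mapsto\Psi(x+t(y-x))$ is $C^2$ so that the one-dimensional calculus identities apply is the right point to flag; it is implicit in the Fr\'echet differentiability assumed in Assumptions \ref{ass:1}.
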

\begin{proof}
See \cite{Matt:Pill:Stu:11,PST13}.
\end{proof}

\section{SPDE Theory} 
\label{sec:infinite}

In this section we study the SDE \eqref{eq:maininf} 
in the infinite dimensional Hilbert space setting; we
work under the assumptions specified in the previous section. 
Recall that our goal is to sample the measure $\pi$ in
\eqref{eqn:targmeas0},
but that we have extended our state space to obtain the measure 
$\Pi$ given by \eqref{eq:target2}, with $q$ marginal given
by $\pi$. Here $\Pi_0$ is the independent
product of $\pi_0=N(0,\cC)$ with itself in the $q$ and $p$
coordinates. The finite dimensional arguments in Section
\ref{sec:Hflow} show that the
equation \eqref{eq:maininf} preserves $\Pi_0$. 
The aim of this section is to show that these steps all make
sense  in the infinite dimensional context, under
the assumptions laid out in the previous section.

\begin{theorem}
\label{t:ode}
Let Assumption \ref{ass:1} hold. Then, for any initial condition
$\bigl(q(0),v(0)\bigr)\in \cHHl$, any $T>0$ and almost every
$\h^s \times \h^s$-valued $\tilde{\mathfrak{C}}_s$-Brownian motion $\tilde{B}(t)=(B_1(t), B_2(t))$, there exists a
unique solution of the SDE (\ref{eq:maininf}) in the space
$C([0,T],\cHHl)$. Furthermore, the It\^o map
$(B_1,B_2) \in C\bigl([0,T];\cHHl\bigr) \mapsto (q,v)
\in C\bigl([0,T];\cHHl\bigr)$ is Lipschitz. 
\end{theorem}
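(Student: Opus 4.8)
The plan is to cast \eqref{eq:maininf} as a semilinear stochastic evolution equation in $\cHHl = \h^s \times \h^s$ and apply a standard fixed-point argument for mild solutions, exploiting the fact that the nonlinearity enters only through the globally Lipschitz map $F$ of Lemma \ref{lem:lipschitz+taylor}. First I would rewrite \eqref{eq:maininf} in the form
\begin{equs}
\frac{d}{dt}\binom{q}{v} = \mathcal{A}\binom{q}{v} + \mathcal{N}\binom{q}{v} + \Sigma \,\frac{d\tilde B}{dt},
\end{equs}
where $\mathcal{A}$ is the bounded linear operator encoding the terms $v - \Gamma_1 q$ in the first component and $-q-\Gamma_2 v$ in the second, $\mathcal{N}(q,v) = \bigl(-\Gamma_1\cC D\Psi(q),\,-\cC D\Psi(q)\bigr)$, and $\Sigma = \mathrm{diag}(\sqrt{2\Gamma_1},\sqrt{2\Gamma_2})$. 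The key structural point, guaranteed by Assumption \ref{ass:1}(5), is that $\Gamma_1,\Gamma_2$ are bounded on $\h^s$ and commute with $\cC$; hence $\mathcal{A}$ is a bounded operator on $\cHHl$ and generates a (norm-continuous) group $e^{t\mathcal{A}}$ on $\cHHl$, with $\|e^{t\mathcal{A}}\|_{\mathcal{L}(\cHHl,\cHHl)} \le e^{Ct}$ on $[0,T]$. This removes any need for analytic-semigroup smoothing estimates.

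Next I would check that the stochastic convolution $Z(t) := \int_0^t e^{(t-r)\mathcal{A}}\Sigma\, d\tilde B(r)$ defines an a.s. continuous $\cHHl$-valued process. Because $\tilde B$ is a $\tilde{\mathfrak{C}}_s$-Brownian motion and $\tr_{\h^s}(\cC_s)<\infty$ by Assumption \ref{ass:1}(1) (cf. Remark \ref{rem:one}), the driving noise has trace-class covariance in $\h^s$; since $\Gamma_1,\Gamma_2$ are bounded and commute with $\cC$, $\Sigma\tilde{\mathfrak{C}}_s\Sigma^*$ is still trace class on $\cHHl$, and a routine factorization/Kolmogorov argument gives $Z \in C([0,T];\cHHl)$ a.s., with moments controlled pathwise by $\|\tilde B\|_{C([0,T];\cHHl)}$. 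Then, writing $y = (q,v)$ and $u = y - Z$, the equation for $u$ becomes the random integral equation $u(t) = e^{t\mathcal{A}}y(0) + \int_0^t e^{(t-r)\mathcal{A}}\mathcal{N}\bigl(u(r)+Z(r)\bigr)\,dr$, which I solve by Banach fixed point in $C([0,T];\cHHl)$: the map $\mathcal{N}$ is globally Lipschitz on $\cHHl$ (it is $(-\Gamma_1,-I)^\top$ applied to $\cC D\Psi(q)$, and $q\mapsto q + \cC D\Psi(q) = F(q)$ is globally Lipschitz on $\h^s$ by Lemma \ref{lem:lipschitz+taylor}, so $\cC D\Psi(q) = F(q)-q$ is too), so the usual Gr\"onwall/contraction-on-small-intervals argument applies and extends to all of $[0,T]$ by the linear growth that global Lipschitzness entails. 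Uniqueness in $C([0,T];\cHHl)$ follows from the same Lipschitz bound plus Gr\"onwall.

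Finally, for the Lipschitz property of the It\^o map, given two driving paths $\tilde B,\tilde B'$ with corresponding solutions $y,y'$, I would subtract the mild formulations, insert and remove $Z,Z'$, use $\|e^{t\mathcal{A}}\| \le e^{CT}$, the global Lipschitz constant of $\mathcal{N}$, and the linearity (hence Lipschitzness) of $\tilde B \mapsto Z$, and close with Gr\"onwall to obtain $\|y-y'\|_{C([0,T];\cHHl)} \lesssim \|\tilde B - \tilde B'\|_{C([0,T];\cHHl)}$. I expect the only genuinely delicate step to be the regularity of the stochastic convolution $Z$ — verifying that the chosen noise covariance is trace class \emph{in the $\h^s$ topology} so that $Z$ lives in $\cHHl$ (not merely a larger space), which is exactly where the hypothesis $s < \kappa - 1/2$ and $\tr_{\h^s}(\cC_s) < \infty$ are used; everything else is a textbook bounded-generator, globally-Lipschitz-nonlinearity fixed-point argument.
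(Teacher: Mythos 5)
Your proof is correct and follows essentially the same route as the paper: both rest on the global Lipschitz property of $q\mapsto \cC D\Psi(q)$ supplied by Lemma \ref{lem:lipschitz+taylor}, the boundedness of the $\Gamma_i$, the trace-class property of $\cC_s$ on $\h^s$ (so that $\tilde B\in C([0,T];\cHHl)$ a.s.), and a contraction-mapping/Gr\"onwall argument, with the Lipschitz continuity of the It\^o map obtained from the same estimates. The only difference is organizational: since the noise coefficient $\sqrt{2\Gamma}$ is a constant bounded operator, the paper absorbs the entire drift (linear and nonlinear) into a single globally Lipschitz map $G$ and writes the stochastic term simply as $\sqrt{2\Gamma}\tilde B(t)$ in the integral equation, so no semigroup $e^{t\mathcal{A}}$, stochastic convolution, or factorization/Kolmogorov argument is needed --- the fixed-point argument is then entirely pathwise and deterministic, which is slightly cleaner than the mild-solution framework you set up.
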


\begin{proof} If we define
\begin{eqnarray*}
x=\left(
\begin{array}{c}
q\\
v
\end{array}
\right),
\end{eqnarray*}
together with the operator 
\be
\Gamma = \left( \begin{array}{cc}
\Gamma_1 & 0  \\
0 & \Gamma_2  \\
 \end{array} \right),
\ee
then equation \eqref{eq:maininf}
takes the form
\begin{align}\label{eqn:simple}
 {dx \over dt} 
 =  G(x) + \sqrt{2\Gamma} {d\tilde{B} \over dt} \;
 \end{align}
where
\begin{eqnarray}\label{GGGGG}
G(x)=\left(
\begin{array}{c}
v-\Gamma_1 F(q)\\
-F(q)-\Gamma_2 v
\end{array}
\right).
\end{eqnarray}
A solution of \eqref{eq:maininf} satisfies the
integral equation
\be
x(t)=x_0+\int_0^t G\bigl(x(s)\bigr)ds+\sqrt{2\Gamma}\tilde{B}(t),
\ee
where $x(0)=x_0.$
By virtue of Lemma \ref{lem:lipschitz+taylor}
we see that $G: \cHHl \to \cHHl$ is globally
Lipschitz. Furthemore, Remark \ref{rem:one} shows
that $\tilde{B} \in C\bigl([0,T];\cHHl\bigr)$ almost surely.
To prove existence and uniqueness of a solution we consider
the map $\Xi: C\bigl([0,T];\cHHl\bigr) \mapsto
C\bigl([0,T];\cHHl\bigr)$ defined by
\be 
\Xi(x)(t):=x_0+\int_0^t G\bigl(x(s)\bigr)ds+\sqrt{2\Gamma}\tilde{B}(t).
\ee
Since $F$ is globally Lipschitz from $\h^s$ into itself,
it follows that $G$ is globally Lipschitz 
from $\h^s \times \h^s$ into itself.
This in turn implies that $\Xi$
is Lipschitz and that, furthermore, the Lipschitz constant
may be made less than one, by choosing $t$ sufficiently small.
From this existence and uniqueness of a solution follows
by the contraction mapping principle, on time-intervals sufficiently
small. The argument may then be repeated on successive time-intervals
to prove the result on any time-interval $[0,T].$

Now let 
\begin{align}\label{upsilon}
\Upsilon: (x_0,\tilde{B}) \in \cHHl \times C\bigl([0,T];\cHHl\bigr)
\mapsto  x \in C\bigl([0,T];\cHHl\bigr).
\end{align}
The arguments used in Lemma 3.7 of \cite{Matt:Pill:Stu:11}
show that $\Upsilon$ is Lipschitz continuous and
hence the desired properties of the It\^o map follow. 
\end{proof}

For $N \in \mathbb{N}$, let $\h^N$ denote the linear span of 
the first $N$ eigenfunctions of $\cC$, $P^N : \h \mapsto \h^N$ denote
the projection map and $\Psi^N=\Psi \circ P^N$. Define
$Q^N = I - P^N$. Recall Equations \eqref{eq:maininf1}. Let 
$\Upsilon^N$ denote the Ito map obtained by replacing $D\Psi$ by $P^N D\Psi^N$ in \eqref{eq:maininf1}. 

The following is the key result of this section. 
Our choices of measure (\ref{eq:target2}) and dynamics
(\ref{eq:maininf1}) have been coordinated to
ensure that the resulting stochastic dynamics preserves $\Pi$:
\begin{theorem}
\label{t:flowprev} 
For any initial condition $\bigl(q(0),v(0)\bigr)\sim \Pi$
and any $T>0,$ the equation \eqref{eq:maininf1} preserves  
$\Pi:$ $\bigl(q(T),v(T)\bigr)\sim \Pi.$ 
\end{theorem}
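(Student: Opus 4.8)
The plan is to prove invariance of $\Pi$ for the finite-dimensional system \eqref{eq:maininf1} and then pass to the limit $N\to\infty$ using the Galerkin-type approximation furnished by $\Upsilon^N$. First I would fix $N$ and work in $\h^N\times\h^N$, where the truncated dynamics $\Upsilon^N$ is a genuine finite-dimensional SDE with globally Lipschitz coefficients (by Lemma \ref{lem:lipschitz+taylor} together with boundedness of $P^N$). On $\h^N\times\h^N$ the reference measure is the Gaussian $\Pi_0^N = N(0,\cC^N)\otimes N(0,\cC^N)$, and the truncated target is $\Pi^N$ with $d\Pi^N/d\Pi_0^N \propto \exp(-\Psi^N)$. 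Writing \eqref{eq:maininf1} in the $z=(q,v)$ variables after the substitutions $\cM=\cL=\cC^{-1}$, $\cK_1=\Gamma_1\cC$, $\cK_2=\Gamma_2\cC^{-1}$ undone, the equation is precisely the second order Langevin equation \eqref{eqn:solang} with Hamiltonian $\Hf$ from \eqref{haminf}, decomposed as drift $= J\,\nabla\Hf - \cK\,\nabla\Hf$ plus noise $\sqrt{2\cK}\,dW$. A direct Fokker--Planck (stationary Kolmogorov forward) computation shows that $\rho \propto \exp(-\Hf)$ is stationary: the $J\nabla\Hf$ part is divergence-free and annihilates any function of $\Hf$ (Liouville), while the $-\cK\nabla\Hf + \sqrt{2\cK}\,dW$ part is in detailed balance with $\exp(-\Hf)$ because $\nabla\cdot(\cK\,e^{-\Hf}\nabla\Hf) = \nabla\cdot(\cK\nabla e^{-\Hf})$ cancels the generator's second-order term; this is the ``straightforward calculation with the Fokker--Planck equation'' already invoked in the text after \eqref{eqn:solang}. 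Since $\exp(-\Hf(q,v)) \propto \exp(-\tfrac12\|q\|_\cC^2 - \tfrac12\|v\|_\cC^2 - \Psi(q))$ is exactly the density of $\Pi^N$ with respect to Lebesgue measure on $\h^N\times\h^N$, this gives $\Pi^N$-invariance of $\Upsilon^N$: if $(q(0),v(0))\sim\Pi^N$ then $(q(T),v(T))\sim\Pi^N$.

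Next I would pass to the limit. By Theorem \ref{t:ode} the Ito map $\Upsilon$ for the full equation \eqref{eq:maininf} is Lipschitz in $(x_0,\tilde B)$ on $\cHHl\times C([0,T];\cHHl)$, and the same argument (Lemma 3.7 of \cite{Matt:Pill:Stu:11}) applies uniformly to $\Upsilon^N$; moreover, because $P^N D\Psi^N(x)\to D\Psi(x)$ in $\h^s$ for each $x\in\h^s$ with the linear growth bound from Assumption \ref{ass:1}(4) holding uniformly in $N$, one gets $\Upsilon^N(x_0,\tilde B)\to\Upsilon(x_0,\tilde B)$ in $C([0,T];\cHHl)$ for every fixed $(x_0,\tilde B)$, with a Gronwall-type estimate controlling the error. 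Taking $x_0\sim\Pi$ (independent of $\tilde B$) and using that $\Pi^N$ is the law of $P^N x_0$ under a suitable coupling — here one must be slightly careful: $\Pi^N$ should be realized as the pushforward of $\Pi$ under $P^N$ composed with the correction coming from $\Psi$ versus $\Psi^N$, but since $\Pi^N$ is by construction the law of the finite-dimensional marginal of $\Pi$ (because the change of measure $\exp(-\Psi^N)=\exp(-\Psi\circ P^N)$ depends only on $P^N q$), we have $P^N x_0 \sim \Pi^N$ exactly — I conclude that $\Upsilon^N(P^Nx_0,P^N\tilde B)(T)\sim\Pi^N$ for all $N$. Then $\Pi^N\Rightarrow\Pi$ weakly on $\cHHl$ (finite-dimensional marginals converge and tightness follows from $\tr_{\h^s}(\cC_s)<\infty$), while $\Upsilon^N(P^Nx_0,P^N\tilde B)(T)\to\Upsilon(x_0,\tilde B)(T)$ in $\cHHl$ almost surely, hence in distribution. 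Matching the two limits gives $(q(T),v(T))=\Upsilon(x_0,\tilde B)(T)\sim\Pi$.

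The main obstacle I anticipate is the approximation step: showing that the truncated Ito maps $\Upsilon^N$ converge to $\Upsilon$ \emph{with the stationary initial law}, uniformly enough to interchange the limit with the (weak) convergence $\Pi^N\Rightarrow\Pi$. The finite-dimensional invariance itself is routine Fokker--Planck bookkeeping, but one must ensure (i) that the Lipschitz and linear-growth constants for $G^N$ (the truncated drift in \eqref{eqn:simple}) are uniform in $N$, which follows from Lemma \ref{lem:lipschitz+taylor} and $\|P^N\|_{\mathcal L(\h^s,\h^s)}\le 1$; (ii) that $\Pi$ is supported on $\cHHl$ and $P^Nx\to x$ in $\h^s$ for $\Pi$-a.e.\ $x$, which is Remark \ref{rem:one}; and (iii) that $\Psi^N\to\Psi$ and $P^ND\Psi^N\to D\Psi$ pointwise on $\h^s$ with the growth bounds passing to the limit, so that the Gronwall estimate for $\|\Upsilon^N(x_0,\tilde B)-\Upsilon(x_0,\tilde B)\|_{C([0,T];\cHHl)}$ closes. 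An alternative, avoiding weak convergence, is to test against bounded continuous functions $\varphi$ on $\cHHl$: write $\EE[\varphi(q(T),v(T))] = \lim_N \EE[\varphi(\Upsilon^N(P^Nx_0,P^N\tilde B)(T))] = \lim_N \int\varphi\,d\Pi^N = \int\varphi\,d\Pi$, where the first equality uses dominated convergence plus the a.s.\ convergence of the Ito maps, the second is finite-dimensional invariance, and the third is $\Pi^N\Rightarrow\Pi$; this is the cleanest route and is the one I would write up.
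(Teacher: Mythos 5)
Your finite--dimensional step is sound and matches what the paper relies on (the Fokker--Planck/Liouville computation for the second order Langevin equation with non-canonical symplectic structure). The gap is in the passage to the limit, specifically in the sentence where you ``resolve'' your own flagged concern: it is \emph{not} true that $P^N_{\#}\Pi=\Pi^N$, i.e.\ that $\Pi^N\propto\exp(-\Psi^N)\,\Pi_0$ is the law of the finite-dimensional marginal of $\Pi$. The weight $\exp(-\Psi^N(q))=\exp(-\Psi(P^Nq))$ does depend only on $P^Nq$, but $\Pi$ carries the weight $\exp(-\Psi(q))$ with the \emph{full} $q$, so the $P^N$-marginal of $\Pi$ has density proportional to $\EE_{\pi_0}\bigl[e^{-\Psi(q)}\mid P^Nq\bigr]$ with respect to $N(0,P^N\cC P^N)$, which differs from $e^{-\Psi(P^Nq)}$ unless $\Psi=\Psi\circ P^N$. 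Consequently $P^Nx_0\not\sim\Pi^N$ when $x_0\sim\Pi$, and the middle equality in your ``cleanest route'' ($\lim_N\EE[\varphi(\Upsilon^N(P^Nx_0,P^N\tilde B)(T))]=\lim_N\int\varphi\,d\Pi^N$) fails: the finite-dimensional invariance you proved applies only when the $N$-th system is \emph{started} from $\Pi^N$, and your coupling does not arrange that.

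The paper avoids this entirely by never coupling $\Pi$ with $\Pi^N$. Its $\Upsilon^N$ is the flow on the full space obtained by truncating only the nonlinearity ($D\Psi\mapsto P^ND\Psi^N$), so the $Q^N\h^s$ modes still evolve as an OU process preserving the reference Gaussian; $\Pi^N\propto\exp(-\Psi^N)\Pi_0$ is then a measure on all of $\h^s\times\h^s$ which factors as a product and is preserved by $\Upsilon^N$ (Lemma \ref{lem:upnpresinv}). The invariance identity for each $N$ is written as an integral of $g\bigl(\Upsilon^N(q,v,W)\bigr)\exp(-\Psi^N(q))$ against the \emph{fixed} reference measure $d\Pi_0(q,v)\,d\bbW(W)$, and one passes to the limit by dominated convergence: $\Upsilon^N\to\Upsilon$ pointwise (Lemma \ref{lem:unconv}), $\exp(-\Psi^N)\to\exp(-\Psi)$ pointwise, and the integrand is dominated by $\sup|g|$ since $\Psi,\Psi^N\geq 0$. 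If you want to keep your projected-system formulation you would instead have to start the $N$-th chain at an independent draw from the finite-dimensional $\Pi^N$ and then separately argue that these initial laws and the corresponding solutions converge appropriately --- which is more work and still requires the density-reweighting idea; the paper's formulation is the fix I would adopt.
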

\begin{proof} The proof follows along the lines of Theorem 3.1 of \cite{BPSS11}.
The key idea is to exploit the fact that for finite dimensional  $\h$, the
 invariance of $\Pi$ under the dynamics $(q(T),v(T))$ follows easily. From this,
 the invariance for an infinite dimensional $\h$ follows from an application
 of the dominated convergence theorem which we outline below.
 
 We let $\bbW$ denote the
Weiner measure on  $X=C([0,T];\cHHl)$ induced by Brownian motions
with covariance the same as that of  $\Pi_0.$
For any continuous, bounded function $g: \h^s \times \h^s \mapsto \mathbb{R}$ and $T > 0$, 
we need to show that 
\begin{align}
\int_{\h \times X} g&\Bigl(\Upsilon \bigl(q,v,W\bigr)\Bigr) \exp{\big(-\Psi(q)\big)}\, d\Pi_0(q,v) d\bbW(W) \nonumber\\
&=
\int_{\h} g(q,v) \exp{\big(-\Psi(q)\big)}\, d\Pi_0(q,v). 
\label{eq:ONE}
\end{align}
First, we claim that for any $N \in \mathbb{N}$,
\begin{align}
\int_{\h \times X} g&\Bigl(\Upsilon^N\bigl(q,v,W\bigr)\Bigr) \exp{\big(-\Psi^N(q)\big)}\, d\Pi_0(q,v) d\bbW(W) \nonumber \\
&= \int_{\h} g(q,v) \exp{\big(-\Psi^N(q)\big)}\, d\Pi_0(q,v). 
\label{eq:TWO}
\end{align}
This follows from the fact that the flow $\Upsilon^N$  preserves the 
invariant measure proportional to $\exp(-\Psi^N) \Pi_0$ as obtained below in
Lemma \ref{lem:upnpresinv}. 


In Lemma \ref{lem:unconv} below, we will show that $\Upsilon^N$  converges pointwise
to $\Upsilon$. Thus by the continuity of $g$, $g\Bigl(\Upsilon^N\bigl(q,v,W\bigr)\Bigr)$ converges
pointwise to $g\Bigl(\Upsilon\bigl(q,v,W\bigr)\Bigr)$. Clearly, $\exp(-\Psi^N(q))$ converges to $\exp(-\Psi(q))$
pointwise.  Since $g$ is bounded and $\Psi, \Psi^N$ are positive, by the dominated convergence theorem the right (resp. left) hand side of of \eqref{eq:TWO} converges to the right (resp. left) hand side of \eqref{eq:ONE} and the claim follows.
\end{proof}

\begin{lemma} \label{lem:upnpresinv}
Let Assumption \ref{ass:1} hold.
The measure $\Pi^N \propto \exp(-\Psi^N) \Pi_0$ factors as the product
of two measures on $P^N \h^s$ and $Q^N \h^s$. The measure $\Pi^N\propto \exp(-\Psi^N) \Pi_0$ is preserved by $\Upsilon^N$.
\end{lemma}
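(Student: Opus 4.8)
The plan is to prove the two assertions of Lemma \ref{lem:upnpresinv} in turn: first the product structure of $\Pi^N$, then its invariance under $\Upsilon^N$. For the product structure, recall that $\Psi^N = \Psi \circ P^N$ depends only on the first $N$ coordinates $q_1,\dots,q_N$. Since $\Pi_0$ is the independent product of $N(0,\cC)$ with itself, and $N(0,\cC)$ itself factors (in the eigenbasis $\{\phi_j\}$) as a product of one-dimensional Gaussians over the modes, the measure $\Pi_0$ splits as $\Pi_0 = \Pi_0^{P} \otimes \Pi_0^{Q}$, where $\Pi_0^{P}$ lives on $P^N \h^s \times P^N \h^s$ and $\Pi_0^{Q}$ lives on $Q^N \h^s \times Q^N \h^s$. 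Because $\exp(-\Psi^N(q))$ is a function of $P^N q$ only, reweighting $\Pi_0$ by it (with the appropriate normalization) affects only the $P^N$ factor, so $\Pi^N = \Pi^{N,P} \otimes \Pi_0^{Q}$ with $\Pi^{N,P} \propto \exp(-\Psi^N)\,\Pi_0^P$; this is exactly the claimed factorization.

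For the invariance, the idea is to observe that $\Upsilon^N$ likewise respects this splitting: in equation \eqref{eq:maininf1} the drift term $D\Psi$ has been replaced by $P^N D\Psi^N$, which takes values in $P^N \h$ and depends only on $P^N q$. Since $\Gamma_1,\Gamma_2$ commute with $\cC$ and hence are diagonal in $\{\phi_j\}$, and the remaining linear terms ($q$, $v$, the $\Gamma_i v$ and $\Gamma_i q$ pieces, and the noise $\sqrt{2\Gamma_i \cC}\,dW_i$) are all diagonal in the eigenbasis, the system decouples: the $P^N$ modes evolve according to a closed finite-dimensional system, while the $Q^N$ modes evolve according to the linear (Gaussian) dynamics \eqref{eq:maininf} with $\Psi \equiv 0$ restricted to $Q^N \h^s$. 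One then argues invariance factor by factor. On the $Q^N$ block, the dynamics is the infinite-dimensional Ornstein--Uhlenbeck-type flow \eqref{eq:maininf} with $\Psi \equiv 0$, which preserves the Gaussian $\Pi_0^Q$; this is precisely the linear case already covered by the finite-dimensional Liouville/Fokker--Planck computation of Section \ref{sec:Hflow} applied mode-by-mode (each mode is a finite-dimensional linear SDE preserving its Gaussian stationary law), together with the fact that these mode-wise invariances assemble into invariance of the product measure. On the $P^N$ block, the dynamics is a genuinely finite-dimensional SDE of the form \eqref{eqn:simple} with the Hamiltonian $\Hf$ restricted to $P^N$; invariance of $\Pi^{N,P} \propto \exp(-\Psi^N)\Pi_0^P$ then follows from the finite-dimensional Fokker--Planck argument already given around \eqref{eqn:solang}, since the drift splits into a $J\nabla \Hf$ part (Liouville, preserves $\exp(-\Hf)$) and a $-\cK\nabla \Hf + \sqrt{2\cK}\,dW$ part (preserves $\exp(-\Hf)$ by the standard reversible-diffusion computation), with the chosen parametrization $\cM = \cC^{-1}$, $\cK_1 = \Gamma_1\cC$, $\cK_2 = \Gamma_2\cC^{-1}$ restricted to the finite-dimensional block. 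Invariance of the product $\Pi^N = \Pi^{N,P}\otimes\Pi_0^Q$ under the decoupled flow is then immediate.

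The main obstacle I anticipate is the bookkeeping needed to make the decoupling rigorous: one must check carefully that, after the substitutions $\cM=\cC^{-1}$, $\cK_1=\Gamma_1\cC$, $\cK_2=\Gamma_2\cC^{-1}$, and the replacement $D\Psi \mapsto P^N D\Psi^N$, no term in \eqref{eq:maininf1} couples a $P^N$ mode to a $Q^N$ mode — this uses crucially that $\Gamma_1,\Gamma_2$ commute with $\cC$ (hence are diagonal in $\{\phi_j\}$) so that $\h^N$ and $Q^N\h$ are invariant subspaces for all the linear operators appearing, including the noise covariances. A secondary technical point is justifying that the mode-wise stationarity of the linear $Q^N$ dynamics yields stationarity of the full Gaussian product measure $\Pi_0^Q$ on $Q^N\h^s$; this is routine since on each finite-truncation the dynamics is a linear Gaussian SDE whose stationary distribution is the corresponding finite-dimensional Gaussian, and one passes to the limit using that $\Pi_0^Q$ is the projective limit of these Gaussians (equivalently, one checks invariance against cylinder test functions). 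Everything else reduces to the finite-dimensional computations already recorded in Section \ref{sec:Hflow}.
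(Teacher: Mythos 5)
Your proposal is correct and follows essentially the same route as the paper's proof: factor $\Pi_0$ (and hence $\Pi^N$) into its $P^N\h^s$ and $Q^N\h^s$ components, note that the modified drift $P^N D\Psi^N$ decouples the dynamics along this splitting, and then invoke the finite-dimensional Fokker--Planck/Liouville computation of Section \ref{sec:Hflow} on the $P^N$ block and the Ornstein--Uhlenbeck invariance of the Gaussian on the $Q^N$ block. Your additional remarks on why the commutation of $\Gamma_1,\Gamma_2$ with $\cC$ guarantees the decoupling simply make explicit what the paper leaves implicit.
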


\begin{proof}
By construction the measure $\Pi_0$ factors as the product of  two measures
$\mu_0 = {N}(0,P^N\cC P^N)$ and $\mu^{\perp}_0 = {N}(0, Q^N \cC Q^N)$. Since $\Psi^N$ is $0$ on $Q^N$, it follows that $\Pi^N$ factors into $\mu_1 \propto \exp(-\Psi^N) \mu_0$ on 
$P^N \h^s$ and $\mu^{\perp}_1 = \mu^{\perp}_0$ on $Q^N \h^s$.

Now, as explained in Section \ref{sec:Hflow} for any $N$, $\mu_1$ is invariant
for  $P^N \Upsilon^N$. Also setting $\Psi = 0$ in \eqref{eq:maininf1} results in an OU flow on $\h^s$ for which $\Pi_0$ is invariant. Thus if $ D\Psi$ is replaced by $P^N D\Psi^N$ in \eqref{eq:maininf1}, the resulting
flow on $Q^N$ is an Orstein-Uhlenbeck process with invariant measure $\mu^{\perp}_1$. 
Since $\Pi^N$ is a product of $\mu_1$ and $\mu^{\perp}_1$, the result follows.
\end{proof}
 
The following result shows the pointwise convergence of $\Upsilon^N$ to $\Upsilon$.
\begin{lemma} \label{lem:unconv}
Let Assumption \ref{ass:1} hold.
As $N \rightarrow \infty$, $\Upsilon^N(x_0,\tilde{B})$ converges to $\Upsilon(x_0,\tilde{B})$ 
for every $(x_0, \tilde{B}) \in  \cHHl \times C\bigl([0,T];\cHHl\bigr)$.
\end{lemma}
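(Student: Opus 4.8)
\textbf{Proof proposal for Lemma \ref{lem:unconv}.}

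The plan is to set $x^N = \Upsilon^N(x_0,\tilde{B})$ and $x = \Upsilon(x_0,\tilde{B})$, and to estimate $\sup_{t \in [0,T]} \|x^N(t) - x(t)\|_{s \times s}$ directly from the integral equations they satisfy, using a Gr\"onwall argument. Writing $G^N$ for the vector field obtained from $G$ in \eqref{GGGGG} by replacing $D\Psi$ with $P^N D\Psi^N$, both $x^N$ and $x$ solve integral equations with the same initial data $x_0$ and the same driving noise $\sqrt{2\Gamma}\tilde{B}$, so the noise terms cancel on subtraction and we obtain
\begin{equs}
x^N(t) - x(t) = \int_0^t \Bigl( G^N\bigl(x^N(s)\bigr) - G\bigl(x(s)\bigr) \Bigr)\, ds.
\end{equs}
I would split the integrand as $\bigl(G^N(x^N(s)) - G^N(x(s))\bigr) + \bigl(G^N(x(s)) - G(x(s))\bigr)$. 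For the first bracket I use the fact, already established in the proof of Theorem \ref{t:ode}, that $G$ (and hence $G^N$, uniformly in $N$, since projections are norm-one) is globally Lipschitz on $\h^s \times \h^s$; this yields a term bounded by $L \int_0^t \|x^N(s) - x(s)\|_{s \times s}\, ds$. For the second bracket, note it equals $\bigl(\Gamma_1(\cC D\Psi(q(s)) - \cC P^N D\Psi^N(q(s))), \ \cC D\Psi(q(s)) - \cC P^N D\Psi^N(q(s))\bigr)$ up to bookkeeping, i.e. it measures exactly the discrepancy $\cC D\Psi(q(s)) - \cC P^N D\Psi^N(q(s))$ evaluated along the \emph{limit} trajectory $x$.

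The key step is therefore to show that $\sup_{t \in [0,T]} \|\cC D\Psi(q(t)) - \cC P^N D\Psi^N(q(t))\|_s \to 0$ as $N \to \infty$, where $q(t)$ is the (fixed, $N$-independent) first component of $x = \Upsilon(x_0,\tilde{B})$. Here one writes $\cC D\Psi(q) - \cC P^N D\Psi^N(q) = (I - P^N)\cC D\Psi(q) + P^N\cC\bigl(D\Psi(q) - D\Psi(P^N q)\bigr)$, using that $\cC$ commutes with $P^N$. The first summand tends to zero because $\cC D\Psi(q) \in \h^s$ (in fact $\cC$ maps $\h^{-s}$ to $\h^{s}$ with room to spare, by the eigenvalue decay in Assumption \ref{ass:1}, and $D\Psi(q) \in \h^{-s}$ by the derivative bound), so $(I - P^N)$ applied to it vanishes in $\h^s$-norm; the second summand is controlled by $\|\cC\|_{\mathcal{L}(\h^s,\h^s)}$ times the Lipschitz bound on $D\Psi$ (equivalently on $F$, from Lemma \ref{lem:lipschitz+taylor}) times $\|q - P^N q\|_s$, which again vanishes since $q(t) \in \h^s$. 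Since $t \mapsto q(t)$ is continuous into $\h^s$ on $[0,T]$, its image is compact, and the convergence $\|(I-P^N)q(t)\|_s \to 0$ is uniform in $t$; likewise $\|(I-P^N)\cC D\Psi(q(t))\|_s \to 0$ uniformly, using continuity of $q \mapsto \cC D\Psi(q)$ into $\h^s$ together with compactness. Call the resulting uniform bound $\epsilon_N \to 0$.

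Combining, we get $\|x^N(t) - x(t)\|_{s \times s} \le C\epsilon_N T + L\int_0^t \|x^N(s) - x(s)\|_{s \times s}\, ds$ for a constant $C$ depending only on $\|\Gamma_1\|$, and Gr\"onwall's inequality gives $\sup_{t \in [0,T]} \|x^N(t) - x(t)\|_{s \times s} \le C\epsilon_N T e^{LT} \to 0$, which is the claim. The main obstacle is the uniform-in-$t$ control of the projection error along the limit trajectory; the clean way to handle it is the compactness argument just sketched (the trajectory $q([0,T])$ is a compact subset of $\h^s$, and $(I - P^N) \to 0$ strongly, hence uniformly on compacts), rather than trying to get a quantitative rate, which would require extra regularity not assumed here. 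Everything else is a routine Gr\"onwall estimate built on the global Lipschitz bounds already in hand from Theorem \ref{t:ode} and Lemma \ref{lem:lipschitz+taylor}.
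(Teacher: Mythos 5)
Your proposal is correct and follows essentially the same route as the paper: subtract the two integral equations (the noise cancels), split $G^N(x^N)-G(x)$ into a Lipschitz term plus the projection error $F-F^N$ evaluated along the limit trajectory, and close with Gr\"onwall. The one place you go beyond the paper is the final uniform-in-$t$ convergence of $\|(I-P^N)q(t)\|_s$ and $\|(I-P^N)\cC D\Psi(q(t))\|_s$: the paper justifies this only by boundedness of $\sup_t\|x(t)\|_s$, whereas your compactness-of-the-trajectory argument is the complete justification of that step.
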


\begin{proof}
Proceeding similarly as in Theorem \ref{t:ode},  set
\be 
 {dx^N \over dt} 
 =  G^N(x) + \sqrt{2\Gamma} {d\tilde{B} \over dt} \;
 \ee
where
\begin{eqnarray} \label{eqn:Gx1}
G^N(x)=\left(
\begin{array}{c}
v-\Gamma_1 F^N(q)\\
-F^N(q)-\Gamma_2 v
\end{array}
\right)
\end{eqnarray}
with $F^{N}(q) = q+\cC P^N D\Psi^N(q)$.
Let $x(t)$ denote the solution 
of \eqref{eq:maininf1} and $x^N$ above satisfies
\begin{align} \label{eqn:inteqn1}
x^N(t)=x_0+\int_0^t G^N\bigl(x^N(s)\bigr)ds+\sqrt{2\Gamma}\tilde{B}(t),
\end{align}
where $x(0)=x_0.$ Set $e = x - x^N$. The pointwise convergence
of $\Upsilon^N$ to $\Upsilon$ is established by showing that
$ e \rightarrow 0$ in the path space $C\bigl([0,T];\cHHl\bigr)$.
We first decompose:
\begin{align}\label{eqn:Gxdec}
G(x) - G^N(x^N) = \big(G(x) - G^N(x)\big) + \big(G^N(x) - G^N(x^N)\big) \;.
\end{align}
Next, it can be shown that  $G^N$ is globally Lipschitz with the Lipschitz constant $L$
independent of $N$ (see \cite{Pill:Stu:Thi:12}, Lemma 4.1). Thus we have $
\|G^N(x(t)) - G^N(x^N(t))\|_s \leq L \|e(t)\|_s \;.$
Combining this bound with \eqref{eqn:inteqn1} and \eqref{eqn:Gxdec},
\be
\|e(t)\|_s \leq \int_0^t L \|e(u)\|_s\, du + \int_0^t \|G(x(u)) - G^N(x(u))\|_s\, du \;.
\ee
Thus by Gronwall's inequality, it suffices to show that
\be
 \sup_{0 \leq t \leq T} \|G(x(t)) - G^N(x(t))\|_s \rightarrow 0
\ee
 as $N \rightarrow \infty$. To this end, write
\be
F(x) - F^N(x) =  (\cC D\Psi(x) - \cC P^N D \Psi(x)) + 
(\cC P^N D \Psi(x) - \cC P^N D \Psi^N(x)) \;.
\ee
Since $C D\Psi$ is globally Lipschitz,
\begin{align}\label{eqn:Gxnbound}
\|G(x(t)) - G^N(x(t))\|_s \lesssim \|(I - P^N)CD\Psi(x(t))\|_s  + 
\| (I - P^N)x(t)\|_s \;.
\end{align}
From the existence of a global solution for \eqref{eq:maininf1} as shown in Theorem \ref{t:ode},
it follows that $\sup_{0 \leq t \leq T} \|x(t)\|_s < \infty$.
Thus from \eqref{eqn:Gxnbound} we infer that
$\sup_{0 \leq t \leq T} \|G(x(t)) - G^N(x(t))\|_s \rightarrow 0$, and the claim follows.
\end{proof}

\section{Diffusion Limit of Algorithms}\label{sec:difflim}
The main result of this section is the diffusion limit Theorem \ref{thm:difflim}: using the prescription \eqref{eqn:alg} and setting 
$\delta=h={\tau}$, we construct a sequence of Markov chains $x\kd$ (i.e., for every fixed delta, $\{x\kd\}_k$ is a Markov chain) and consider   the process $z\de(t)$  which is the continuous time interpolant of  the chain $x\kd$. Then $z\de(t)$ converges to the solution of the SDE \eqref{eqn:thmdifflim}, which is a specific instance of \eqref{eqn:simple}, when $\Gamma_1=0$.  By Theorem \ref{t:flowprev}, the flow \eqref{eqn:thmdifflim}  preserves the measure $\Pi$ defined in \eqref{eq:target2}. 

More precisely, 
 for $q,v\in \Hel$, let  $x\in \cHHl $ denote the pair $x=(q,v)$; we recall that  the norm of $x$ is then
$$
\|x\|^2_{s\times s} :=\|q\|_s^2+\|v\|_s^2.
$$
With the algorithm described in Section \ref{sec:algorithm}, taking $\delta=h={\tau}$  we construct the Markov chain 
$x\kpod:=(q^{k+1,\delta}, v^{k+1,\delta})$ as follows
\begin{align}\label{eqn:algK}
\begin{split}
(q^{k+1,\delta}, v^{k+1,\delta}) &= (q_*^{k+1,\delta},v_*^{k+1,\delta}) \quad \mathrm{with \,probability}\, \,\alpha^{k,\delta} \\
&= (q{\kd}, -(v{\kd})') \,\quad \mathrm{otherwise},
\end{split}
\end{align}
where 
$$
\alpha\kd=\alpha(x\kd,\xi\de): = 1 \wedge \exp \Bigl(\Hf\bigl(q\kd,(v\kd)'\bigr) - \Hf\bigl(q_*\kpod,v_*\kpod \bigr)\Bigr).
$$
We specify that in the above
$$
(q_*^{k+1,\delta},v_*^{k+1,\delta})  = \chi^{\delta} \circ \Theta^\delta_0 \, (q{\kd},v{\kd}) \; \quad \mbox{and} \quad 
((q{\kd})', -(v{\kd})')=\left(q{\kd}, -\mathcal{P}_v\left(\Theta_0^{\delta}(q\kd,v{\kd})\right)\right),
$$
where for $x\in \cHHl$, we denote by  $\mathcal{P}_q(x)$ and  $\mathcal{P}_v(x)$ the projection  of $x$ on the $q$ and $v$ component, respectively. Notice that introducing  $\gamma\kd \sim \textup{Bernoulli}(\alpha\kd)$, the algorithm \eqref{eqn:algK} can be 
also written as
\begin{align*}
(q^{k+1,\delta}, v^{k+1,\delta}) &=\gamma\kd (q_*^{k+1,\delta},v_*^{k+1,\delta})+ (q{\kd}, -(v{\kd})').
\end{align*}

Following \cite{PST13}, we consider the piecewise linear and the piecewise constant  interpolant of the chain $x\kd$, $z^{\delta}(t)$
and $\bar{z}^{\delta}(t)$, respectively:
\begin{align}\label{eqn:continter}
z^{\delta}(t):=\frac{1}{\delta}(t-t_k)x\kpod+\frac{1}{\delta}(t_{k+1}-t)x\kd, 
\qquad t_k\leq t <t_{k+1}, \mbox{  } t_k=k\delta ,
\end{align}
\begin{align}\label{eqn:constinter}
\bar{z}^{\delta}(t):=x\kd \qquad t_k\leq t <t_{k+1}, \mbox{  } t_k=k\delta.
\end{align}
Decompose the chain $x\kd$ into its drift and martingale part:
$$
x\kpod=x\kd+\delta G\de(x\kd) +\sqrt{2\delta \mathcal{S}}M\kd
$$
where 
$$
\mathcal{S}=
\left[
\begin{array}{cc}
\mathrm{Id} & 0\\
0 &\Gamma_2
\end{array}
\right],
$$
\begin{align}
& G\de(x)  :=\frac{1}{\delta} \EE_x\left[x\kpod-x\kd\vert x\kd=x\right],\label{eqn:ddelta}\\
& M\kd  := \frac{\mathcal{S}^{-1/2}}{\sqrt{2\delta}}\left( x\kpod-x\kd - \delta G\de(x\kd)\right)
\label{eqn:Mdelta}\\
& M\de(x)  := \EE\left[ M\kd \vert x\kd=x\right].\label{mdeltax}
\end{align}
Notice that with this definition, if $\mathcal{F}\kd$ is the filtration generated by $\{x^{j,\delta}, \gamma^{j,\delta}, \xi\de, j=0, \dots, k\}$, we have 
$\EE[M\kd\vert \mathcal{F}\kd ]=0$. 
Also, let us introduce the rescaled noise process
\begin{align}\label{rescnoise}
\tilde{B}\de(t):=\sqrt{2\mathcal{S}\delta}\sum_{j=0}^{k-1} M^{j,\delta}+\sqrt{\frac{2\mathcal{S}}{\delta}}(t-t_k)
M\kd, \qquad t_k \leq t < t_{k+1}.
\end{align}
A simple calculation, which we present in Appendix A, shows  that
\begin{align}\label{mapB}
z\de(t)=\Upsilon(x_0,\hat{B}\de),
\end{align}
where $\Upsilon$ is the map defined in \eqref{upsilon} and $\hat{B}\de$ is the rescaled noise process $\tilde{B}^{\delta}$ plus a term which we will show to be small:
\be 
\hat{B}\de(t):=\tilde{B}\de(t)+\int_0^t \left[ G\de(\bar{z}\de(u))- G(z\de(u))\right] du;
\ee
we stress that in the above and throughout this section the map $G(x)$ is as in \eqref{GGGGG} with $\Gamma_1=0$.

Let $B_2(t)$ be an $\mathcal{H}^s$-valued  $\cC_s$-Brownian motion (we recall that the covariance operator $\mathcal{C}_s$ has been defined in \eqref{Cs}) and $\mathcal{H}^s\times \mathcal{H}^s \ni B(t)=(0,B_2(t))$. Recall the SPDE
\eqref{eq:maininf} written in the form \eqref{eqn:simple}.
The main result of this section is the following diffusion limit of the Markov
chain \eqref{eqn:algK} to \eqref{eqn:simple}.

\begin{theorem}[Diffusion limit] \label{thm:difflim}
Let Assumption \ref{ass:1} hold and
let $(\cH^s, \langle \cdot, \cdot \rangle_s) $ be a separable Hilbert space,  $x\kd$ be the Markov chain \eqref{eqn:algK} starting at $x^{0,\delta}=x_0 \in \mathcal{H}^s \times \mathcal{H}^s$ and 
 let $z\de(t)$ be the process  defined by  \eqref{eqn:continter}.  If Assumption \ref{ass:1} holds then $z\de(t)$ converges weakly in $C([0,T];  \cH^s\times \cH^s)$  to the solution $z(t)\in  \cH^s\times \cH^s$ of the stochastic differential equation
\begin{align}\label{eqn:thmdifflim}
\begin{split}
 dz(t)&=G(z)dt + \sqrt{2\Gamma}\, dB(t)\\
z(0)&= x_0.
\end{split}
\end{align}
\end{theorem}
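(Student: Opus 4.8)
The plan is to follow the strategy of \cite{PST13}: rather than working directly with the generator of the Markov chain, I would exploit the representation \eqref{mapB}, namely $z\de(t)=\Upsilon(x_0,\hat B\de)$, together with the Lipschitz continuity of the It\^o map $\Upsilon$ established in Theorem \ref{t:ode}. Since $\Upsilon$ maps the driving noise path continuously (indeed Lipschitz) to the solution path, it suffices to prove that the perturbed noise process $\hat B\de$ converges weakly in $C([0,T];\cHHl)$ to $\sqrt{2\mathcal S}$ applied to the appropriate Wiener process; equivalently, by the continuous mapping theorem, that $\hat B\de \weak \tilde B$ where $\tilde B$ drives \eqref{eqn:thmdifflim}. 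This reduces the diffusion limit to two separate tasks: (i) showing that the drift-correction term $\int_0^t [G\de(\bar z\de(u))-G(z\de(u))]\,du$ converges to zero (uniformly on $[0,T]$, in probability, in $\|\cdot\|_{s\times s}$), and (ii) showing that the rescaled martingale process $\tilde B\de(t)$ defined in \eqref{rescnoise} satisfies an invariance principle, converging weakly to $\sqrt{2\mathcal S}$ times a $\tilde{\mathfrak C}_s$-Brownian motion with the correct block covariance structure $\mathcal S$.

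For step (i), the key is a careful expansion of the one-step mean increment $G\de(x)=\delta^{-1}\EE_x[x\kpod-x\kd]$. Writing the proposal as $\chi^\delta\circ\Theta_0^\delta$ and using that $\chi^\delta$ is a second-order-accurate, volume-preserving, time-reversible integrator of the Hamiltonian flow \eqref{eqn:Hsplit}, I would Taylor-expand the proposal map in powers of $\delta$; the $\OO(\delta)$ term must reproduce the drift $G(x)$ with $\Gamma_1=0$, i.e.\ $(v,-F(q)-\Gamma_2 v)$, once one accounts for the OU half-step on $v$ and for the contribution of rejections (which, crucially, flip the velocity to $-(v\kd)'$). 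The rejection contribution is where the momentum flip enters: one must show the expected acceptance probability is $1-\OO(\delta^{?})$ with a sufficiently high power that the flip term contributes only $o(\delta)$ to the mean increment and to the covariance — this uses Lemma \ref{lem:lipschitz+taylor}, particularly the second-order Taylor estimate \eqref{e.taylor.order2} for the energy difference $\Hf(q\kd,(v\kd)')-\Hf(q_*\kpod,v_*\kpod)$, together with the exact preservation of the Gaussian reference measure by $\chi^\delta$ in the case $\Psi\equiv 0$ (so that only the $\Psi$-dependent part of the energy fluctuates). I would also need a priori moment bounds $\sup_k \EE\|x\kd\|_{s\times s}^p <\infty$ (or at least control on the relevant norms), propagated through the chain using the growth assumptions in Assumption \ref{ass:1}; these bounds let one pass from pointwise-in-$x$ estimates on $G\de(x)-G(x)$ to the uniform-in-time statement after invoking tightness.

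For step (ii), I would verify the hypotheses of a Hilbert-space martingale invariance principle (e.g.\ the one used in \cite{PST13}, ultimately going back to standard results): the quadratic-variation process of $\tilde B\de$ must converge, in probability and uniformly on $[0,T]$, to $t\,\tilde{\mathfrak C}_s$ composed with $\mathcal S$ — concretely $\delta\sum_{j=0}^{\lfloor t/\delta\rfloor} \EE[M^{j,\delta}\otimes M^{j,\delta}\mid\mathcal F^{j,\delta}] \to \tfrac12 t\,(\text{covariance of }\sqrt{2\mathcal S}\,\tilde B)$, and a Lindeberg-type negligibility condition on the increments must hold. Computing the one-step conditional covariance of $M\kd$ again requires expanding the proposal and isolating the OU-noise contribution $\xi^\delta$ (covariance $\cC(I-e^{-2\delta\Gamma_2})\approx 2\delta\Gamma_2\cC$ on the $v$-block), checking that the Hamiltonian integrator contributes no noise at leading order and that rejection/flip events are rare enough not to affect the limit. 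Tightness of $\{z\de\}$ in $C([0,T];\cHHl)$ follows either directly from the representation and tightness of $\{\hat B\de\}$ (which in turn follows from the martingale CLT machinery plus the smallness of the drift correction) or via an Aldous-type criterion using the moment bounds.

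\textbf{Main obstacle.} I expect the principal difficulty to be the rigorous treatment of the rejection/velocity-flip mechanism: unlike the reversible MALA analysis in \cite{Pill:Stu:Thi:12}, every rejected step here does \emph{not} leave the chain in place but sends $(q\kd,v\kd)\mapsto(q\kd,-(v\kd)')$, so the flip term appears with probability $1-\alpha\kd$ in both the mean increment and the covariance. One must show that $\EE[1-\alpha\kd]$ (and more refined quantities like $\EE[(1-\alpha\kd)\|x\kd\|_{s\times s}]$) is $o(1)$ at the right rate — plausibly $\OO(\delta^{3/2})$ or so, governed by the size of the energy increment, which is itself controlled by the second-order accuracy of $\chi^\delta$ and the Taylor bound \eqref{e.taylor.order2}. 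Getting these estimates uniform in the (unbounded) state $x\kd$ and then combining them with a priori moment bounds to conclude that the flip contributes nothing to the diffusion limit is the technically delicate core of the argument, and the place where genuinely new ideas beyond \cite{PST13} are required.
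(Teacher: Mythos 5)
Your plan coincides with the paper's proof: the authors likewise use the representation $z\de=\Upsilon(x_0,\hat B\de)$ and the Lipschitz It\^o map to reduce the theorem to (i) a pointwise drift estimate $\|G\de(x)-G(x)\|_{s\times s}\lesssim\delta(1+\|x\|_{s\times s}^6)$ combined with a priori moment bounds along the chain, and (ii) a martingale invariance principle for $\tilde B\de$ with quadratic-variation convergence to the degenerate covariance $\mathfrak C_s$ (noise only in the $v$-block) plus a Lindeberg condition. You also correctly isolate the technical core, namely the velocity flip on rejection; the paper handles it exactly as you anticipate, via $\EE_x|1-\alpha\de|^p\lesssim\delta^{2p}(1+\|q\|_s^{4p}+\|v\|_s^{4p})$ (so the rate is in fact $\OO(\delta^2)$, slightly better than your guess) together with even-power moment bounds that are insensitive to the sign flip.
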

The diffusion limit can be proven as a consequence of  \cite[Lemma 3.5]{PST13}.  Proposition \ref{lemma:difflimit} below is a slightly more general version 
of \cite[Lemma 3.5]{PST13}.
\begin{proof}[Proof of Theorem \ref{thm:difflim}] Theorem \ref{thm:difflim} follows as a consequence of Proposition
\ref{lemma:difflimit} and Lemma
\ref{lemma:ass-cond} below.
\end{proof}
 Consider the following conditions:

\begin{cond}\label{drift-martCond}
The Markov chain $x\kd \in  \cH^s\times \cH^s$ defined in \eqref{eqn:algK} satisfies
\begin{itemize}
\item {\bf Convergence of the approximate drift.} There exist a globally Lipshitz function \linebreak
$G: \cH^s\times \cH^s \rightarrow   \cH^s\times \cH^s$,  a real number $a>0$ and an integer $p\geq 1$ such that
\begin{align}\label{dde-G}
\|G\de(x)-G(x)\|_{s\times s}\les \delta^a (1+ \|x\|_{s\times s}^p).
\end{align}

\item {\bf Size of the increments.} There exist a real number $r>0$ and an integer $n\geq 1$ such that
\begin{align}\label{sizeincrem}
\EE\left[ \nors{x\kpod -x\kd}\vert x\kd=x  \right]\lesssim \delta^{r}(1+ \nors{x}^{n}).
\end{align}

\item {\bf A priori bound.} There exists a real number $\epsilon$ such that $1-\epsilon +(a\wedge r)>0$ (with $a$ and $r$ as in 
\eqref{dde-G} and \eqref{sizeincrem}, respectively) and the following bound holds:
\begin{align}\label{dde}
\sup_{\delta\in (0, 1/2)}\left\{  \delta^{\epsilon} \EE \left[ \sum_{k\delta\leq T} \| x\kd \|_{s\times s}^{p \vee n} \right]    
\right\}< \infty.
\end{align}

\item {\bf Invariance principle.} As $\delta$ tends to zero the sequence of processes $\tilde{B}\de$ defined in \eqref{rescnoise} converges weakly in 
$C([0,T];\mathcal{H}^s \times \mathcal{H}^s)$  to the Brownian motion 
$\mathcal{H}^s\times \mathcal{H}^s \ni B=(0,B_2)$ where $B_2$ is a $\mathcal{H}^s$-valued, $\cC_s$-Brownian motion.
\end{itemize}
\end{cond}

\begin{remark}\label{Remark} \textup{
Notice that if \eqref{dde-G} holds for some $a>0$ and $p\geq 1$, then  
\begin{align}\label{sizeincremf}
\nors{\EE [ x\kpod -x\kd\vert x\kd ]}\lesssim \delta (1+ \nors{x}^{p})
\end{align}
and
\begin{align}\label{nonldrift}
\nors{G\de (x)}\les 1+\nors{x}^p. 
\end{align}
Indeed 
$$
\nors{\EE [ x\kpod -x\kd\vert x\kd =x]} = \delta \nors{G\de(x)} \leq
\delta\nors{G\de (x)-G(x)}+\delta \nors{G(x)} \les \delta (1+ \nors{x}^p), 
$$
having used the Lipshitzianity of the map $G(x)$. Analogously one can obtain \eqref{nonldrift} as well.
}
\end{remark}
\begin{prop}\label{lemma:difflimit}
Let Assumption \ref{ass:1} hold and let
$(\cH^s, \langle \cdot, \cdot \rangle_s) $ be a separable Hilbert space and $x\kd$ a sequence of $ \cH^s\times \cH^s$ valued
 Markov chains with $x^{0,\delta}=x_0.$
Suppose the drift martingale decomposition  \eqref{eqn:ddelta}- \eqref{eqn:Mdelta}of $x\kd$  satisfies Condition \ref{drift-martCond}. Then the sequence of interpolants $z\de(t)$ defined in \eqref{eqn:continter}  converges weakly in $C([0,T];  \cH^s\times \cH^s)$  to the solution $z(t)\in  \cH^s\times \cH^s$ of the stochastic differential equation \eqref{eqn:thmdifflim}.
\end{prop}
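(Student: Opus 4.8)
The plan is to deduce the proposition from the representation \eqref{mapB}, which expresses the piecewise-linear interpolant as $z^\delta = \Upsilon\bigl(x_0,\hat B^\delta\bigr)$ with $\Upsilon$ the It\^o solution map of \eqref{upsilon} and $\hat B^\delta(t) = \tilde B^\delta(t) + E^\delta(t)$, where $E^\delta(t) := \int_0^t\bigl[G^\delta(\bar z^\delta(u)) - G(z^\delta(u))\bigr]\,du$. By Theorem \ref{t:ode} the map $\Upsilon(x_0,\cdot) : C([0,T];\cHHl) \to C([0,T];\cHHl)$ is Lipschitz, hence continuous, and, since here $\Gamma_1 = 0$, the image $\Upsilon\bigl(x_0,(0,B_2)\bigr)$ is precisely the solution $z$ of \eqref{eqn:thmdifflim}. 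By the continuous mapping theorem it therefore suffices to prove that $\hat B^\delta \weak B$ in $C([0,T];\cHHl)$, where $B = (0,B_2)$.

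To this end I would write $\hat B^\delta = \tilde B^\delta + E^\delta$, use the "Invariance principle" clause of Condition \ref{drift-martCond} to get $\tilde B^\delta \weak B$, and invoke Slutsky's theorem to reduce the task to showing that $E^\delta \to 0$ in probability in $C([0,T];\cHHl)$; in fact I would prove the stronger statement $\EE\sup_{0\le t\le T}\|E^\delta(t)\|_{s\times s} \to 0$. Bounding $\sup_{t\le T}\|E^\delta(t)\|_{s\times s}$ by $\int_0^T\|G^\delta(\bar z^\delta(u)) - G(z^\delta(u))\|_{s\times s}\,du$ and splitting the integrand as $\bigl(G^\delta(\bar z^\delta) - G(\bar z^\delta)\bigr) + \bigl(G(\bar z^\delta) - G(z^\delta)\bigr)$, the first term is handled by the approximate-drift bound \eqref{dde-G} and the second by the global Lipschitz property of $G$ together with the elementary fact that, for $u\in[t_k,t_{k+1})$, the point $z^\delta(u)$ lies on the segment joining $x^{k,\delta}$ and $x^{k+1,\delta}$, so that $\|\bar z^\delta(u) - z^\delta(u)\|_{s\times s}\le\|x^{k+1,\delta} - x^{k,\delta}\|_{s\times s}$. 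Taking expectations, rewriting the time integral as $\delta\sum_{k\delta\le T}$, using the increment bound \eqref{sizeincrem} (after conditioning on $x^{k,\delta}$) on the second term and the a priori moment bound \eqref{dde} on both (with $p\vee n\ge p,n$), one finds the two contributions are respectively $O(\delta^{a} + \delta^{a+1-\eps})$ and $O(\delta^{r} + \delta^{1+r-\eps})$; these vanish as $\delta\to0$ since $a,r>0$ and $1-\eps+(a\wedge r)>0$.

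Combining the two paragraphs gives $\hat B^\delta \weak B$, and the continuous mapping theorem then yields $z^\delta = \Upsilon(x_0,\hat B^\delta) \weak \Upsilon(x_0,B) = z$, which is the assertion. I expect the only genuine work to be the verification that the drift-correction term $E^\delta$ is asymptotically negligible: this is where one must exploit the precise interplay of the exponents in Condition \ref{drift-martCond}, in particular the fact that \eqref{sizeincrem} is only a conditional estimate and so has to be paired with the unconditional control \eqref{dde}, and that the constraint $1-\eps+(a\wedge r)>0$ is exactly what forces both contributions above to zero. The overall structure parallels \cite[Lemma 3.5]{PST13}, and the identity \eqref{mapB} is borrowed from Appendix A, so no further difficulties are anticipated.
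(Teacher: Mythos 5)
Your proposal is correct and follows essentially the same route as the paper: both reduce the claim via the representation $z^\delta=\Upsilon(x_0,\hat B^\delta)$ and the Lipschitz continuity of $\Upsilon$ from Theorem \ref{t:ode} to the weak convergence of $\hat B^\delta$, and both identify the only substantive step as showing $\EE\int_0^T\|G^\delta(\bar z^\delta(u))-G(z^\delta(u))\|_{s\times s}\,du\to0$, which you handle with exactly the paper's splitting, the bound $\|\bar z^\delta(u)-z^\delta(u)\|_{s\times s}\le\|x^{k+1,\delta}-x^{k,\delta}\|_{s\times s}$, and the interplay of \eqref{dde-G}, \eqref{sizeincrem} and \eqref{dde} under the constraint $1-\eps+(a\wedge r)>0$. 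The remaining steps you spell out (continuous mapping plus Slutsky) are those the paper delegates to \cite[Lemma 3.5]{PST13} without change.
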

\begin{proof}
Thanks to the Lipshitzianity of the map  $\Upsilon$ in \eqref{mapB} (see Theorem \ref{t:ode}), the proof is analogous to the proof of 
\cite[Lemma 3.5]{PST13}. We sketch it in Appendix A. 
\end{proof}

\begin{lemma}\label{lemma:ass-cond}
Let Assumption \ref{ass:1} hold and let
$x\kd$ be the Markov chain \eqref{eqn:algK} starting at $x^{0,\delta}=x_0 \in \mathcal{H}^s \times \mathcal{H}^s$. Under Assumption \ref{ass:1} the drift martingale decomposition of
 $x\kd$,  \eqref{eqn:ddelta}- \eqref{eqn:Mdelta}, satisfies Condition \ref{drift-martCond}.
\end{lemma}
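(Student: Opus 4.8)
The plan is to verify each of the four bullet points in Condition~\ref{drift-martCond} separately for the chain \eqref{eqn:algK}, obtaining explicit exponents $a$, $r$, $p$, $n$, $\epsilon$ along the way. The central quantitative input is that the integrator $\chi^\delta = \Theta_1^{\delta/2}\circ \sR^\delta\circ\Theta_1^{\delta/2}$ is a second-order accurate, volume-preserving, time-reversible approximation of the Hamiltonian flow \eqref{eqn:Hsplit}, and that $\Theta_0^\delta$ solves the OU part exactly; combining these, the proposal $(q_*^{k+1,\delta},v_*^{k+1,\delta})$ should agree with the time-$\delta$ solution of \eqref{eqn:g10} up to errors that are $\OO(\delta^2)$ in the deterministic part and $\OO(\delta^{3/2})$ in the stochastic part, measured in $\|\cdot\|_{s\times s}$, with constants that grow polynomially in $\|x\kd\|_{s\times s}$. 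All of this rests on Lemma~\ref{lem:lipschitz+taylor} (global Lipschitzness of $F$ on $\h^s$ and the second-order Taylor remainder bound for $\Psi$) together with Assumption~\ref{ass:1}(5) (boundedness of the $\Gamma_i$ on $\h^s$) and Remark~\ref{rem:one} (trace-class property guaranteeing the increments land in $\h^s$).

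Concretely, I would proceed as follows. First, expand one step of the algorithm: write $x_*\kpod - x\kd$ as the exact OU increment in the $v$-component plus the Hamiltonian-integrator increment, Taylor-expand $\chi^\delta$ and $\Theta_0^\delta$ in $\delta$, and isolate the leading drift term $\delta\,G\de(x)$ with $G$ as in \eqref{GGGGG} with $\Gamma_1=0$. Second, for \emph{Convergence of the approximate drift}, one must account for the acceptance/rejection: by \eqref{eqn:algK}, $x\kpod - x\kd = \gamma\kd(x_*\kpod - x\kd) + (1-\gamma\kd)(0,-2(v\kd)' )$ — wait, more precisely $x\kpod-x\kd = \gamma\kd(x_*\kpod-x\kd) + (x\kd_q, -(v\kd)') - x\kd$ on rejection, so the rejection contributes the momentum-flip term $(0,-(v\kd)'-v\kd)$. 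Taking $\EE_x$, the dominant contribution is $\alpha\kd$ times the deterministic drift, and the key estimate is that $\EE_x[1-\alpha\kd]$ is small — of order $\delta^2$ or so — because the energy difference $\Hf(q\kd,(v\kd)') - \Hf(q_*\kpod,v_*\kpod)$ is $\OO(\delta^2)$ in expectation (this is exactly where the second-order accuracy of $\chi^\delta$ and the Taylor bound \eqref{e.taylor.order2} enter, controlling $\Delta\Hf$ in terms of $\|x\|_{s\times s}$). Combining, one gets \eqref{dde-G} with, say, $a = 1$ (or whatever power the energy-difference analysis yields; I expect the bottleneck to force $a$ down to something like $\tfrac12$ or $1$) and $p$ a small integer determined by the polynomial growth of the constants. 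Third, \emph{Size of the increments} \eqref{sizeincrem} follows from the same one-step expansion by taking second moments: the increment is $\OO(\sqrt\delta)$ in $\h^s$ because of the Brownian part of the OU step (covariance $\cC_s(I - e^{-2\delta\Gamma_2})= \OO(\delta)$ in trace on $\h^s$ by Remark~\ref{rem:one}), plus $\OO(\delta)$ deterministic terms, so $\EE_x\|x\kpod-x\kd\|_{s\times s}^2 \lesssim \delta(1+\|x\|_{s\times s}^n)$, giving $r=1$ and some integer $n$.

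Fourth, for the \emph{A priori bound} \eqref{dde} I would establish a one-step Lyapunov/moment estimate of the form $\EE_x\|x\kpod\|_{s\times s}^{2m} \le (1 + C\delta)\|x\|_{s\times s}^{2m} + C\delta$, exploiting that the OU step is a contraction on the $v$-component (since $\Gamma_2$ is positive) and that the Hamiltonian rotation $\sR^\delta$ is an isometry on $\h^s\times\h^s$ while the nonlinear kicks $\Theta_1^{\delta/2}$ add only $\OO(\delta)$ bounded-by-$\cC D\Psi$ terms; iterating over $k\delta\le T$ via discrete Gronwall yields $\sup_k\EE\|x\kd\|_{s\times s}^{2m}\le C(T,x_0)$, so \eqref{dde} holds with $\epsilon=0$ and any $1 + (a\wedge r)>0$ trivially satisfied. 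Fifth, the \emph{Invariance principle} for $\tilde B\de$ in \eqref{rescnoise}: the martingale increments $M\kd$ are, to leading order, the normalized OU Brownian increments in the $v$-slot, whose conditional covariance converges to $\cC_s$ (in $\h^s$); I would apply a Hilbert-space martingale invariance principle (as in \cite{PST13}, following Berger or the Prokhorov-tightness-plus-finite-dimensional-CLT route), checking the Lindeberg condition via the increment bound \eqref{sizeincrem} and the convergence of the predictable quadratic variation to $t\,\mathfrak{C}_s$ using the a priori bound to control remainder terms. The main obstacle, as the authors flag in the introduction, is the momentum-flip term contributed by rejected steps: one must show that although a flip changes $v\kd$ by an $\OO(1)$ amount (not $\OO(\sqrt\delta)$), flips occur with probability $\OO(\delta^2)$ (or at least $o(\delta)$) \emph{in conditional expectation}, so that their cumulative effect over $\OO(\delta^{-1})$ steps on the drift is negligible; pinning down that the expected energy difference — hence $\EE_x[1-\alpha\kd]$ — is genuinely higher order than $\delta$, uniformly in the polynomially-growing state, is the delicate estimate and is presumably what Appendix~B is devoted to.
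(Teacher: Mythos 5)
Your outline reproduces the paper's own architecture: the proof given in the paper consists precisely of invoking four sub-lemmas (Lemmas \ref{lemma:d-G}, \ref{Lemmasize}, \ref{lemma:sup} and \ref{lemmanoise}), one per bullet of Condition \ref{drift-martCond}, and your four steps correspond to these one-for-one. You also correctly identify the two structural ideas that make the argument work: that $\EE_x\lv 1-\alpha\de\rv$ is $\OO(\delta^2)$ with polynomially growing constants (Lemma \ref{lemma:alpha-1}, proved via the explicit formula for $\Delta\Hf$ and the Taylor bound \eqref{e.taylor.order2}), and that the momentum flip is harmless for the moment bounds because even powers of the norm cannot see the sign of $(v\kd)'$, combined with the isometry of $\sR^\delta$, the $\OO(\delta)$ size of the kicks $\Theta_1^{\delta/2}$, and the conditional independence of $v\kd$ and $\xi\de$ (Remark \ref{rem:flip}).

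Two of your exponents are wrong as stated, although neither breaks the argument. First, \eqref{sizeincrem} is a bound on the \emph{first} conditional moment of the increment; since the $v$-increment contains the OU noise $\xi\de$ with $\EE\nor{\xi\de}\asymp\delta^{1/2}$ (your own observation), the best attainable exponent is $r=\tfrac12$, not $r=1$; the paper's Lemma \ref{Lemmasize} gets $r=\tfrac12$, $n=6$. Second, a uniform-in-$k$ moment bound $\sup_k\EE\nors{x\kd}^{m}\le C$ does \emph{not} yield \eqref{dde} with $\epsilon=0$: the sum over $k\delta\le T$ has $\OO(\delta^{-1})$ terms, so the unweighted sum diverges like $\delta^{-1}$ and one must take $\epsilon=1$, as in Lemma \ref{lemma:sup}. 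With the corrected values the constraints are still met, since $1-\epsilon+(a\wedge r)=1-1+(1\wedge\tfrac12)=\tfrac12>0$ and $4r-\eta_3=2-1>0$ for $\eta_3=1$. Beyond these slips, be aware of one technical point you gloss over: $\gamma\de$ is correlated with $\xi\de$ (the acceptance probability depends on the realized noise), so cross terms such as $\EE[(1-\gamma\de)\xi\de]$ must be handled by Cauchy--Schwarz together with the second moment of $1-\alpha\de$, as in \eqref{fact3}, rather than by independence. Finally, your proposal defers rather than supplies the one genuinely delicate estimate --- the $\OO(\delta^{2p})$ bound on $\EE_x\lv 1-\alpha\de\rv^p$ uniformly in the polynomially growing state --- which is where the real work of Appendix B lies; without it none of the four conditions can actually be closed.
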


The remainder of this section is devoted to proving Lemma \ref{lemma:ass-cond}, which is needed to prove
Theorem \ref{thm:difflim}. First, in Section \ref{sec:Basic Estimates} we  list and explain several preliminary technical lemmata, which will be proved in Appendix B.  The main one is Lemma \ref{lemma:alpha-1}, where we study the acceptance probability.   Then, in Section  \ref{sec:AnalysisDrift} and Section \ref{sec:analysisNoise}, we prove Lemma \ref{lemma:ass-cond}; in order to prove such a lemma we need to show that if  Assumption \ref{ass:1} holds, the four conditions listed in Condition \ref{drift-martCond} are satisfied by the chain $x\kd$. To this end, 
Lemma \ref{lemma:d-G} proves that \eqref{dde-G} holds with $a=1$ and $p=6$; Lemma 
\ref{Lemmasize} shows that \eqref{sizeincrem} is satisfied with $r=1/2$ and $n=6$; the a priori bound \eqref{dde} 
is proved to hold for $\epsilon=1$ and for any power of $\nors{x\kd}$ in Lemma \ref{lemma:sup}; finally,  Lemma \ref{lemmanoise} is the invariance principle.

\begin{proof}[Proof of Lemma \ref{lemma:ass-cond}]  Lemma \ref{lemma:ass-cond} follows as a consequence of Lemma \ref{lemma:d-G}, Lemma \ref{Lemmasize}, 
 Lemma \ref{lemma:sup} and Lemma \ref{lemmanoise}.
\end{proof}

\subsection{Preliminary Estimates}\label{sec:Basic Estimates}
We first analyse the acceptance probability. 
 Given the current state of the chain  $x\kd=x=(q,v)$, the acceptance probability of the proposal $(q_*, v_*)$ is  
\begin{align}\label{alphade}
\alpha\de:=\alpha^{0,\delta}(x,\xi\de)= 1\wedge \exp\left(\Hf (q, v')-\Hf (q_*, v_*)\right)=
1\wedge \exp\left(\Delta \Hf (q, v')\right). 
\end{align}
Similarly, we denote 
$$
\gamma\de:=\gamma^{0,\delta}\sim\textup{Bernoulli}(\alpha\de).
$$
For an infinite dimensional Hilbert space setting, the matter of the well-posedness of the expression for the acceptance probability is not obvious; we comment on this below. 
\begin{remark}\label{rem:wellposaccprob}\textup{
Before proceeding to the analysis, let us make a few observations about the expression \eqref{alphade} for the acceptance probability. 
\begin{itemize}
\item As we have already mentioned,  the flip of the sign of the velocity  in case of rejection of the proposal move guarantees time-reversibility. As a consequence the proposal moves are symmetric  and the acceptance  probability can be defined only in terms of the energy difference.
\item We are slightly abusing notation in going from the original $H(q,p)$ to $H(q,v)$. However notice that $\Hf (q,v)$ is preserved by the flow \eqref{eqn:Hsplit}. 
\item The relevant energy difference here is $\Hf (q,v')- \Hf (q_*, v_*)$ (rather than $\Hf (q,v)-\Hf (q_*, v_*)$);  indeed the first step in the definition of the  proposal $(q_*, v_*)$, namely the OU process $\Theta\de_0(q,v)$, is based on an exact 
integration and preserves the desired invariant measure. Therefore the accept-reject mechanism (which is here only to  preserve the overall reversibility of the chain by accounting for  the numerical error made by the integrator $\chi_{\tau}^h$) doesn't need to include also the energy difference $\Hf (q,v)-\Hf (q, v')$. 
\item The Hamiltonian $\Hf (q,v)$, defined in \eqref{haminf}, is almost surely infinite in an infinite dimensional context; this can be seen by just applying a zero-one law to the series representation of the scalar product $\langle q, \C^{-1} q \rangle$.   However, in order for the acceptance probability to be well defined, all we need is for the  difference  
$\Hf (q, v')-\Hf (q_*, v_*)$ to be almost surely finite, i.e. for  
$\Delta \Hf  (q, v')$ to be a bounded operator. This is here the case thanks to the choice of the Verlet algorithm. Indeed from  \cite[page 2212]{BPSS11} we know that
\begin{align}
\Delta H(q, v')&=\Psi(q)-\Psi(q_*) -\frac{\delta}{2}
\left( \langle \nabla \Psi(q), v' \rangle + \langle \nabla \Psi(q_*), v_* \rangle\right) \nonumber\\
&+\frac{\delta^2}{8}\left( \|\cC^{1/2}\nabla\Psi (q_*)\|^2
 - \|\cC^{1/2}\nabla\Psi (q)\|^2 \right).  \nonumber
\end{align}
More details on this fact can be found in \cite[page 2210, 2212, 2227]{BPSS11}.
\end{itemize}
}
\end{remark}
\begin{lemma}\label{lemma:alpha-1}
Let Assumption \ref{ass:1} hold. Then, for any $p\geq 1$, 
\begin{align}\label{alpha-1}
\EE_x\lv 1-\alpha\de \rv^p \les \delta^{2p} (1+\nor{q}^{4p}+\nor{v}^{4p}).
\end{align}
\end{lemma}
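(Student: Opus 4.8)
The plan is to exploit the explicit formula for $\Delta H(q,v')$ recorded in Remark \ref{rem:wellposaccprob}, together with the elementary inequality $|1 - (1\wedge e^y)| \le 1 \wedge |y| \le |y|$, which reduces the claim to showing that
\[
\EE_x |\Delta H(q,v')|^p \lesssim \delta^{2p}\bigl(1 + \|q\|_s^{4p} + \|v\|_s^{4p}\bigr).
\]
Here $(q_*,v_*) = \chi^\delta(q,v')$ and $v'$ is the velocity output of the OU step $\Theta_0^\delta(q,v)$, so $v' = (1-\iota^2)^{1/2}v + \iota w$ in the notation of \eqref{eq:preserve} (with general $\Gamma_2$, $v' = e^{-\delta\Gamma_2}v + \xi^\delta$ where $\xi^\delta \sim N(0,\cC(I - e^{-2\delta\Gamma_2}))$). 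The first step is therefore to expand each of the three groups of terms in the formula for $\Delta H(q,v')$ and show each contributes at order $\delta^2$.

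First I would handle the $O(\delta^2)$-prefactored term $\tfrac{\delta^2}{8}(\|\cC^{1/2}\nabla\Psi(q_*)\|^2 - \|\cC^{1/2}\nabla\Psi(q)\|^2)$: since $\|\cC^{1/2}\nabla\Psi(x)\| \lesssim \|\nabla\Psi(x)\|_{-s} \lesssim 1 + \|x\|_s$ by Assumption \ref{ass:1}(4) and the fact that $\cC^{1/2}: \h^{-s}\to\h^s$ is bounded (using $\tr_{\h^s}\cC_s < \infty$), this term is bounded by $\delta^2(1 + \|q\|_s^2 + \|q_*\|_s^2)$, and a short estimate on the $\chi^\delta$ map (global Lipschitzness of $F$, Lemma \ref{lem:lipschitz+taylor}) gives $\|q_*\|_s \lesssim 1 + \|q\|_s + \|v'\|_s$; taking $p$-th moments and using Gaussianity of $\xi^\delta$ to control moments of $\|v'\|_s$ yields the stated bound for this piece with room to spare. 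The delicate cancellation is in the remaining terms: writing $T_1 := \Psi(q) - \Psi(q_*)$ and $T_2 := -\tfrac{\delta}{2}(\langle\nabla\Psi(q),v'\rangle + \langle\nabla\Psi(q_*),v_*\rangle)$, one must see that $T_1 + T_2 = O(\delta^2)$. For this I would Taylor-expand $\Psi(q_*)$ around $q$ using \eqref{e.taylor.order2}: $\Psi(q_*) = \Psi(q) + \langle\nabla\Psi(q), q_* - q\rangle + O(\|q_* - q\|_s^2)$. From the structure of $\chi^\delta = \Theta_1^{\delta/2}\circ\sR^\delta\circ\Theta_1^{\delta/2}$ one computes $q_* - q = \delta v' + O(\delta^2)(\cdots)$ and $v_* = v' - \delta F(q) + O(\delta^2)(\cdots) = v' + O(\delta)$ — more precisely one needs the second-order expansions of $\sR^\delta$ (rotation) and $\Theta_1^{\delta/2}$ (shear), all of which are explicit — so that $\langle\nabla\Psi(q), q_* - q\rangle = \delta\langle\nabla\Psi(q),v'\rangle + O(\delta^2)$ and $\langle\nabla\Psi(q_*),v_*\rangle = \langle\nabla\Psi(q),v'\rangle + O(\delta)$, whence $T_1 + T_2 = O(\delta^2)$ with the remainder controlled by $\|\nabla\Psi\|_{-s}$, $\|\partial^2\Psi\|_{\mathcal{L}(\h^s,\h^{-s})}$ and polynomial factors of $\|q\|_s, \|v'\|_s$.

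The main obstacle, and where the bookkeeping is heaviest, is tracking the $O(\delta^2)$ remainders through the composition defining $\chi^\delta$ while keeping the polynomial dependence on $\|q\|_s$ and $\|v\|_s$ explicit and of the claimed degree, and then converting $\|v'\|_s$-moments into $\|v\|_s$-moments: since $v' = e^{-\delta\Gamma_2}v + \xi^\delta$ with $\Gamma_2$ bounded on $\h^s$ and $\xi^\delta$ a centred Gaussian whose $\h^s$-moments are $O(1)$ uniformly in $\delta \in (0,1/2)$ (indeed $O(\delta^{1/2})$, though $O(1)$ suffices here), one has $\EE_x\|v'\|_s^{k} \lesssim 1 + \|v\|_s^{k}$ for every $k$. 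Combining this with the pointwise bound $|\Delta H(q,v')| \lesssim \delta^2(1 + \|q\|_s^2 + \|v'\|_s^2 + \|q\|_s\|v'\|_s + \cdots)$ — a polynomial of degree $4$ in $(\|q\|_s,\|v'\|_s)$ after all terms are collected — raising to the $p$-th power and taking $\EE_x$ gives $\EE_x|\Delta H(q,v')|^p \lesssim \delta^{2p}(1 + \|q\|_s^{4p} + \|v\|_s^{4p})$, which is exactly \eqref{alpha-1}. The degree $4$ (rather than $2$) arises because the worst term, the $\tfrac{\delta^2}{8}\|\cC^{1/2}\nabla\Psi\|^2$ difference combined with the Lipschitz bound on $q_*$, is quadratic in quantities that are themselves linear in $(\|q\|_s, \|v'\|_s)$, and the cross terms from $T_1 + T_2$'s second-order remainder $\|q_* - q\|_s^2 \lesssim \delta^2(1 + \|v'\|_s)^2$ contribute similarly; I would double-check no term of degree higher than $4$ survives, which is the one place the stated exponent could in principle need revisiting.
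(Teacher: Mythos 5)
Your proposal is correct, and the heart of it -- Taylor-expanding $\Psi(q_*)$ about $q$ via \eqref{e.taylor.order2}, using the explicit form of $\chi^\delta$ to get $q_*-q=\delta v'+O(\delta^2)$ and $v_*=v'+O(\delta)$, and controlling moments of $\|v'\|_s$ by those of $\|v\|_s$ plus Gaussian moments of $\xi^\delta$ -- is exactly the paper's route to the bound $\EE_x|\mathcal{F}_\Psi|^p\lesssim\delta^{2p}(1+\|q\|_s^{2p}+\|v\|_s^{2p})$. Where you differ is the initial reduction: you use the single pointwise inequality $|1-(1\wedge e^y)|\le|y|$ (valid, since $e^y\ge 1+y$ for $y\le 0$ and the left side vanishes for $y\ge 0$), which immediately gives $\EE_x|1-\alpha^\delta|^p\le\EE_x|\Delta\Hf|^p$. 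The paper instead interposes two auxiliary acceptance probabilities $\tilde{\alpha}^\delta=1\wedge e^{\mathcal{F}_\Psi}$ and $\bar{\alpha}^\delta=1+\mathcal{F}_\Psi\mathbf{1}_{\{\mathcal{F}_\Psi\le0\}}$ and splits $|1-\alpha^\delta|$ into three pieces controlled respectively by the $\tfrac{\delta^2}{8}$-term, by $|\mathcal{F}_\Psi|^{2}$, and by $|\mathcal{F}_\Psi|$. Your route is shorter and in fact yields the slightly sharper exponent $2p$ on the state variables (the stated $4p$ is then a fortiori true); the paper's finer decomposition buys extra powers of $\delta$ on two of the three pieces ($\delta^{3p}$ and $\delta^{4p}$), which is information the rest of the paper does not actually use. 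One small imprecision: you invoke boundedness of $\cC^{1/2}:\h^{-s}\to\h^s$, which need not hold under Assumption \ref{ass:1} unless $s\le\kappa/2$; what you actually need (and what Lemma \ref{psiq-psiq*} provides) is $\|\cC^{1/2}\nabla\Psi(x)\|\lesssim 1+\|x\|_s$, i.e.\ boundedness of $\cC^{1/2}$ from $\h^{-s}$ into $\h$ only, since the norm appearing in $\Delta\Hf$ is the plain $\h$-norm. This does not affect the validity of the argument.
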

\begin{proof}
The proof of Lemma \ref{lemma:alpha-1} can be found in Appendix B. 
\end{proof}
 The above \eqref{alpha-1} quantifies the intuition that the acceptance rate is very high, i.e. the proposal is rejected very rarely. Therefore the analysis of Section \ref{sec:AnalysisDrift} and Section \ref{sec:analysisNoise} is done by bearing in mind that 
``everything goes as if $\alpha\de$ were equal to one".  We now state a few technical results, gathered in Lemma \ref{psiq-psiq*} and Lemma \ref{lemma:q-q*},   that will be frequently  used in the following.

\begin{lemma}\label{psiq-psiq*}
Let Assumption \ref{ass:1} hold. Then, 
 for any $q,\tilde{q},v,\tilde{v} \in \cH^s,$
\begin{align}
 & \|\cC \nabla \Psi(q)-\cC \nabla\Psi(\tilde{q})\|_s \lesssim \| q-\tilde{q}\|_s  \mbox{ and } \nor {\cC \nabla \Psi(q)}\les (1+\nor{q});\label{lemma6.1i}\\
 & \lv\langle \nabla\Psi(q),v \rangle \rv \lesssim  (1+\| q \|_s) \|v\|_s;  \nonumber\\
& \lv \langle \nabla \Psi(q), v\rangle - \langle \nabla \Psi(\tilde{q}), \tilde{v}\rangle\rv
 \lesssim  \nor{v}\nor{q-\tilde{q}}+ (1+ \nor{\tilde{q}}) \nor{v-\tilde{v}};   \label{lemma6.1iii}  \\
& \|\cC^{1/2}\nabla\Psi(q)\|  \lesssim  1+ \nor{q};  \nonumber  \\
& \| \cC^{1/2}\nabla\Psi(q) -\cC^{1/2}\nabla\Psi(\tilde{q})\|
 \lesssim  \nor{q-\tilde{q}}.  \label{lemma6.1v}
\end{align}

\begin{proof} See \cite[Lemma 4.1]{BPSS11}
\end{proof}
\end{lemma}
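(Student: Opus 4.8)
The statement of Lemma \ref{psiq-psiq*} collects five elementary estimates on $\Psi$ and its derivatives, all of which are immediate consequences of Assumption \ref{ass:1} (parts 3, 4, 5) together with the boundedness of $\cC$ as an operator on $\h^s$ (which follows from part 5, since $\Gamma_i$ commute with $\cC$, and from the eigenvalue decay in part 1). The plan is to treat the five lines in order, in each case reducing to the linear growth bound on $\nabla\Psi$ in $\h^{-s}$, the global bound on $\partial^2\Psi \in \mathcal{L}(\h^s,\h^{-s})$, and the mapping properties of $\cC$. A single preliminary observation does most of the work: since $\cC = Q_s^{-1/2}\cC_s Q_s^{-1/2}$ commutes with the scale of Hilbert spaces in the appropriate sense, and $\lambda_j \asymp j^{-\kappa}$ with $\kappa > s + 1/2$, the operator $\cC$ maps $\h^{-s}$ boundedly into $\h^{s}$; more precisely $\|\cC w\|_s \lesssim \|w\|_{-s}$ for all $w \in \h^{-s}$, and likewise $\|\cC^{1/2} w\| \lesssim \|\cC^{1/2}w\|_0 \lesssim \|w\|_{-s}$ once one checks $\lambda_j \lesssim j^{-s}$, which holds since $\kappa > s$. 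I would establish this mapping bound first, as all subsequent estimates feed through it.

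For \eqref{lemma6.1i}: write $\cC\nabla\Psi(q) - \cC\nabla\Psi(\tilde q) = \cC\int_0^1 \partial^2\Psi(\tilde q + t(q-\tilde q))(q - \tilde q)\,dt$, apply the mapping bound to pass from $\h^{-s}$ to $\h^s$, and use $\|\partial^2\Psi(\cdot)\|_{\mathcal{L}(\h^s,\h^{-s})} \lesssim 1$; the growth bound $\|\cC\nabla\Psi(q)\|_s \lesssim 1 + \|q\|_s$ follows identically from $\|\nabla\Psi(q)\|_{-s} \lesssim 1 + \|q\|_s$. For the second line, $|\langle \nabla\Psi(q), v\rangle| \le \|\nabla\Psi(q)\|_{-s}\|v\|_s \lesssim (1+\|q\|_s)\|v\|_s$ by the duality pairing $\h^{-s}$–$\h^s$. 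For \eqref{lemma6.1iii}, split the difference as $\langle \nabla\Psi(q) - \nabla\Psi(\tilde q), v\rangle + \langle \nabla\Psi(\tilde q), v - \tilde v\rangle$, bound the first term using the mean-value form of $\nabla\Psi(q) - \nabla\Psi(\tilde q)$ in $\h^{-s}$ (magnitude $\lesssim \|q - \tilde q\|_s$) paired against $v$, and the second term by $(1 + \|\tilde q\|_s)\|v - \tilde v\|_s$. The last two lines (the $\cC^{1/2}$ versions) are proved the same way, using $\|\cC^{1/2}w\| \lesssim \|w\|_{-s}$ in place of $\|\cC w\|_s \lesssim \|w\|_{-s}$.

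I do not anticipate a genuine obstacle here; this is a bookkeeping lemma and the authors indeed simply cite \cite[Lemma 4.1]{BPSS11}. The one point requiring a little care is the verification that $\cC : \h^{-s} \to \h^s$ is bounded with the claimed norm — one must check that the diagonal entries $\lambda_j^2 \cdot j^{2s} \cdot j^{2s} = \lambda_j^2 j^{4s} \asymp j^{4s - 2\kappa}$ are bounded, i.e. that $\kappa \ge 2s$; since Assumption \ref{ass:1}(2) only gives $s < \kappa - 1/2$, for the sharp statement one instead argues directly that $\|\cC w\|_s^2 = \sum_j \lambda_j^4 j^{4s} w_j^2 \le (\sup_j \lambda_j^2 j^{2s})^2 \sum_j \lambda_j^{-? }\dots$ — more cleanly, factor $\cC = \cC^{1/2}\cC^{1/2}$ and note $\cC^{1/2}$ maps $\h^{-s} \to \h^0$ boundedly iff $\lambda_j \lesssim j^{-s}$ (true since $\kappa > s$) and $\cC^{1/2}$ maps $\h^0 \to \h^s$ boundedly iff $\lambda_j j^s \lesssim 1$ (again true since $\kappa > s$), giving the composition bound. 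Everything else is routine application of the triangle inequality, the fundamental theorem of calculus, and Assumption \ref{ass:1}.
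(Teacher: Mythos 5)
Your proof is correct. The paper itself offers no argument here --- it simply cites \cite[Lemma 4.1]{BPSS11} --- and what you have written is precisely the standard argument behind that citation: the smoothing bounds $\|\cC w\|_s \lesssim \|w\|_{-s}$ and $\|\cC^{1/2} w\| \lesssim \|w\|_{-s}$ (valid since $\lambda_j \asymp j^{-\kappa}$ with $\kappa > s$), combined with the duality pairing and the fundamental theorem of calculus applied to $\nabla\Psi$ using the uniform bound on $\partial^2\Psi$ in $\mathcal{L}(\h^s,\h^{-s})$. So this is a correct, self-contained filling-in of a delegated proof rather than a divergence from the paper.

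One small remark: your momentary worry that the direct diagonal computation requires $\kappa \ge 2s$ rests on an arithmetic slip. Viewing $\cC:\h^{-s}\to\h^s$ in the weighted coordinates, the relevant diagonal entries are $\lambda_j^4 j^{4s}$ (the operator acts by $\lambda_j^2$ on each mode, and this gets squared in the norm), not $\lambda_j^2 j^{4s}$; boundedness is therefore equivalent to $\lambda_j j^{s}\lesssim 1$, i.e.\ $\kappa \ge s$, exactly the same condition your factorization $\cC=\cC^{1/2}\cC^{1/2}$ produces. The detour through the factorization is harmless but unnecessary.
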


Recall that $B_2(t)$ is an  $\mathcal{H}^s$-valued  $\cC_s$-Brownian motion and  that    $\xi\de$
is the noise component of the OU process $\Theta_0\de$, i.e.
\begin{align}\label{v'}
v'=e^{- \delta \Gamma_2 }v+\int_0\de e^{- (\delta-u) \Gamma_2}\sqrt{2\Gamma_2}dB_2(u)=:e^{- \delta \Gamma_2 }v+ \xi\de.
\end{align}
By integrating $\chi\de$ and $\Theta_0\de$,  the proposal move at step $k$, 
$x_*\kpod=(q_*\kpod, v\kpod_*)$, is given by
\begin{align}\label{eqn:q*v*}
& q\kpod_*= \cos\delta q\kd + \sin \delta \left( v\kd\right)' - \frac{\delta}{2}\sin{\delta}\, \cC \nabla \Psi (q\kd),\\
& v\kpod_*=-\sin\delta q\kd + \cos\delta\left( v\kd\right)' -\frac{\delta}{2}\cos\delta \,  \cC \nabla \Psi (q\kd)
- \frac{\delta}{2}\, \cC \nabla \Psi (q\kpod_*).
\end{align}
If $\gamma^k:=\gamma\kd\sim $ Bernoulli$(\alpha\kd)$, then the $(k+1)^\mathrm{th}$ step of the Markov chain is
\begin{align}\label{step}
& q\kpod=\gamma^k q_*\kpod+(1-\gamma^k)q\kd,             \nonumber\\
& v\kpod=\gamma^k v_*\kpod-(1-\gamma^k) (v\kd)' .
\end{align}

\begin{lemma}\label{lemma:q-q*}
Let Assumption \ref{ass:1} hold. Then, for any $p\geq 1$, we have
\begin{align}
& \EE\|\xi\de\|_s^p\lesssim \delta^{p/2}; \label{lemma6.2i}\\
& \EE \|(v\kd)'\vert x\kd=x\|_s^p\lesssim 1+\|v\|_s^p; \label{lemma6.2ii}\\ 
& \EE [\|q_*\kpod-q\kd\|_s^p\vert x\kd=x]\lesssim \delta^p (1+\nor{q}^p+\nor{v}^p). \label{lemma6.2iii} 
\end{align}
\end{lemma}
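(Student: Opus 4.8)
The plan is to prove the three bounds of Lemma \ref{lemma:q-q*} in the order listed, using the explicit representations \eqref{v'} and \eqref{eqn:q*v*} together with the estimates of Lemma \ref{psiq-psiq*}.

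First, for \eqref{lemma6.2i}: recall from \eqref{v'} that $\xi\de = \int_0^\delta e^{-(\delta-u)\Gamma_2}\sqrt{2\Gamma_2}\,dB_2(u)$, a Gaussian random variable in $\h^s$. Its covariance operator is $2\int_0^\delta e^{-(\delta-u)\Gamma_2}\Gamma_2 e^{-(\delta-u)\Gamma_2}\,du$ composed with $\cC_s$; since $\Gamma_2$ is bounded on $\h^s$ and commutes with $\cC$ (Assumption \ref{ass:1}), this covariance is bounded in trace norm by $\les \delta\,\tr_{\h^s}(\cC_s)$. Hence $\EE\|\xi\de\|_s^2 \les \delta$, and by Gaussianity (Fernique / equivalence of Gaussian moments) $\EE\|\xi\de\|_s^p \les (\EE\|\xi\de\|_s^2)^{p/2} \les \delta^{p/2}$.

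Next, for \eqref{lemma6.2ii}: from \eqref{v'}, $(v\kd)' = e^{-\delta\Gamma_2}v + \xi\de$. Since $\Gamma_2$ is a bounded operator on $\h^s$, $\|e^{-\delta\Gamma_2}v\|_s \les \|v\|_s$ uniformly in $\delta \in (0,1/2)$. Then $\|(v\kd)'\|_s^p \les \|v\|_s^p + \|\xi\de\|_s^p$ by the triangle inequality and convexity, and taking expectations and using \eqref{lemma6.2i} gives $\EE[\|(v\kd)'\|_s^p \mid x\kd = x] \les \|v\|_s^p + \delta^{p/2} \les 1 + \|v\|_s^p$.

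Finally, for \eqref{lemma6.2iii}: from the first line of \eqref{eqn:q*v*}, $q_*\kpod - q\kd = (\cos\delta - 1)q\kd + \sin\delta\,(v\kd)' - \tfrac{\delta}{2}\sin\delta\,\cC\nabla\Psi(q\kd)$. Using $|\cos\delta - 1| \les \delta^2 \les \delta$ and $|\sin\delta| \les \delta$ for $\delta \in (0,1/2)$, the triangle inequality gives $\|q_*\kpod - q\kd\|_s \les \delta\|q\kd\|_s + \delta\|(v\kd)'\|_s + \delta\|\cC\nabla\Psi(q\kd)\|_s$. The last term is bounded by $\delta(1 + \|q\kd\|_s)$ via \eqref{lemma6.1i}. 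Raising to the $p$-th power, using convexity, taking conditional expectation and invoking \eqref{lemma6.2ii} then yields $\EE[\|q_*\kpod - q\kd\|_s^p \mid x\kd = x] \les \delta^p(1 + \|q\|_s^p + \|v\|_s^p)$.

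The only mildly delicate point is the trace-norm bound on the covariance of $\xi\de$ in the first step, which relies on the commutation of $\Gamma_2$ with $\cC$ and the finiteness of $\tr_{\h^s}(\cC_s)$; everything else is a routine combination of the triangle inequality, boundedness of $e^{-\delta\Gamma_2}$ on $\h^s$, and the already-established estimates of Lemma \ref{psiq-psiq*}. Since this lemma is cited as coming from \cite[Lemma 4.1]{BPSS11}, I would simply assemble these steps; the main obstacle is bookkeeping the $\delta$-uniformity of all constants rather than any conceptual difficulty.
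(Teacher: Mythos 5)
Your proof is correct and follows essentially the same route as the paper's: identify the Gaussian law of $\xi\de$ (the paper states its covariance directly as $\cC_s(I-e^{-2\delta\Gamma_2})$, which is exactly what your integral evaluates to), bound the trace by $\tr_{\h^s}(\cC_s)\,\|I-e^{-2\delta\Gamma_2}\|_{\mathcal{L}(\h^s,\h^s)}\les\delta$ and invoke equivalence of Gaussian moments, then deduce \eqref{lemma6.2ii} from \eqref{v'} and \eqref{lemma6.2iii} from the explicit formula \eqref{eqn:q*v*} together with \eqref{lemma6.1i} and \eqref{lemma6.2ii}. No gaps; the argument matches the one in Appendix B.
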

\begin{proof} See Appendix B. 
\end{proof}



\subsection{Analysis of the drift}\label{sec:AnalysisDrift}
Let $G(x)$ be the map in \eqref{GGGGG} with $\Gamma_1=0$, i.e.
$$
G(x)=G(q,v)=\left[\begin{array}{c}
v \\
-q -\cC \nabla \Psi(q)-\Gamma_2 v
\end{array} 
\right],
$$
and $G_i(x)$  and  $G_i\de, i =1,2, $ be the $i^\mathrm{th}$ component of $G$ and $G\de$, respectively.

\begin{lemma}\label{lemma:d-G}
Let Assumption \ref{ass:1} hold. Then, for any $x=(q,v) \in \cH^s \times \cH^s$,
\begin{align}\label{d-G1}
\|G\de_1(x)-G_1(x)\|_{s}\les \delta\, (1+ \|q\|_{s}^6+ \nor{v}^6),
\end{align}
\be 
\|G\de_2(x)-G_2(x)\|_{s}\les \delta\, (1+ \|q\|_{s}^6+ \nor{v}^6).
\ee
\end{lemma}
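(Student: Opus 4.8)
The strategy is to compute $G^\delta(x) = \frac{1}{\delta}\EE_x[x^{1,\delta} - x^{0,\delta}]$ explicitly from the one-step update \eqref{step}, and compare it term by term with $G(x) = (v,\, -q - \cC\nabla\Psi(q) - \Gamma_2 v)$. Writing $x^{1,\delta} - x^{0,\delta} = \gamma^0(x_*^{1,\delta} - x^{0,\delta}) + (1-\gamma^0)(0, -(v^{0,\delta})' - v^{0,\delta})$, I would split $\EE_x[x^{1,\delta}-x^{0,\delta}]$ into the ``full acceptance'' contribution $\EE_x[x_*^{1,\delta} - x^{0,\delta}]$ minus a correction $\EE_x[(1-\gamma^0)(x_*^{1,\delta} - (q^{0,\delta}, -(v^{0,\delta})'))]$. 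The correction term is controlled by Cauchy--Schwarz together with Lemma \ref{lemma:alpha-1} (which gives $\EE_x|1-\alpha^\delta|^p \lesssim \delta^{2p}(1+\|q\|_s^{4p}+\|v\|_s^{4p})$) and Lemma \ref{lemma:q-q*} (which bounds the moments of $q_*^{1,\delta}-q^{0,\delta}$, of $(v^{0,\delta})'$, and of $v^{0,\delta}$); the $\delta^2$ from the acceptance defect easily absorbs into the required $\OO(\delta)$, and the polynomial in $\|x\|_s$ of degree at most $6$ comes from the product of a degree-$\le 4$ factor and a degree-$\le 2$ factor after applying Cauchy--Schwarz. So the correction is negligible and the real content is the expansion of the deterministic-in-$\xi^\delta$ proposal.

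For the main term, I would use the explicit proposal formulas \eqref{eqn:q*v*}. For the first component, $q_*^{1,\delta} - q^{0,\delta} = (\cos\delta - 1)q + \sin\delta\,(v)' - \frac{\delta}{2}\sin\delta\,\cC\nabla\Psi(q)$, so $\frac{1}{\delta}\EE_x[q_*^{1,\delta}-q^{0,\delta}] = \frac{\cos\delta-1}{\delta}q + \frac{\sin\delta}{\delta}\EE_x[(v)'] - \frac{\sin\delta}{2}\cC\nabla\Psi(q)$. Since $\EE_x[(v)'] = e^{-\delta\Gamma_2}v$ (the $\xi^\delta$ part is mean zero), and using $|\cos\delta-1|\lesssim\delta^2$, $|\sin\delta-\delta|\lesssim\delta^3$, $\|e^{-\delta\Gamma_2}v - v\|_s \lesssim \delta\|v\|_s$ (as $\Gamma_2$ is bounded on $\h^s$), and $\|\cC\nabla\Psi(q)\|_s\lesssim 1+\|q\|_s$ from \eqref{lemma6.1i}, one gets $\|G_1^\delta(x) - v\|_s \lesssim \delta(1+\|q\|_s+\|v\|_s)$, which is stronger than \eqref{d-G1}. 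For the second component, $\frac{1}{\delta}\EE_x[v_*^{1,\delta} - v^{0,\delta}]$ involves $-\frac{\sin\delta}{\delta}q$, $\frac{\cos\delta}{\delta}(\EE_x[(v)'] - v)$ plus the $\frac{\cos\delta-1}{\delta}v$ piece, $-\frac{\cos\delta}{2}\cC\nabla\Psi(q)$, and the term $-\frac{1}{2}\EE_x[\cC\nabla\Psi(q_*^{1,\delta})]$; comparing with $G_2(x) = -q - \cC\nabla\Psi(q) - \Gamma_2 v$. Here the $-\Gamma_2 v$ must emerge from $\frac{1}{\delta}(\EE_x[(v)'] - v) = \frac{1}{\delta}(e^{-\delta\Gamma_2} - I)v \approx -\Gamma_2 v$, with remainder $\OO(\delta\|v\|_s)$; and the $-\cC\nabla\Psi(q)$ emerges as $-\frac12\cC\nabla\Psi(q) - \frac12\EE_x[\cC\nabla\Psi(q_*^{1,\delta})]$, so I must show $\EE_x\|\cC\nabla\Psi(q_*^{1,\delta}) - \cC\nabla\Psi(q)\|_s \lesssim \delta(1+\ldots)$, which follows from the Lipschitz bound in \eqref{lemma6.1i} combined with the moment estimate \eqref{lemma6.2iii} on $\|q_*^{1,\delta}-q\|_s$.

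The main obstacle, and the place where the degree-$6$ polynomial genuinely arises rather than degree $\le 2$, is the acceptance-defect correction term combined with the need to take expectations of nonlinear quantities like $\cC\nabla\Psi(q_*^{1,\delta})$: here one must be careful that the event $\{\gamma^0 = 0\}$ is not independent of $q_*^{1,\delta}$, so a clean Cauchy--Schwarz (or H\"older with a suitable exponent) splitting is needed, pairing the $L^p$ bound on $1-\alpha^\delta$ against the $L^p$ bounds on the increments; tracking the exponents through H\"older is what pushes the polynomial degree up to $6$ and the $\delta$-power is safely above $1$. Everything else is Taylor expansion of $\sin\delta$, $\cos\delta$, $e^{-\delta\Gamma_2}$ and routine application of Lemmas \ref{lemma:alpha-1}, \ref{psiq-psiq*}, \ref{lemma:q-q*}. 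I would write the first-component bound in full since it is short, and for the second component organize the estimate as ``identify each target term, bound the mismatch,'' citing the three lemmas for each mismatch.
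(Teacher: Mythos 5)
Your proposal is correct and follows essentially the same route as the paper's proof: Taylor expansion of the explicit proposal \eqref{eqn:q*v*}, the mean-zero property of $\xi\de$, the Lipschitz and growth bounds of Lemma \ref{psiq-psiq*} together with the increment moments of Lemma \ref{lemma:q-q*}, and a Cauchy--Schwarz pairing of $\EE_x|1-\alpha\de|^p$ from Lemma \ref{lemma:alpha-1} against the increment moments (conditioning on $\xi\de$ to replace $\gamma\de$ by $\alpha\de$). The only difference is organizational --- you isolate an ``always-accept'' main term plus a rejection correction, whereas the paper keeps $\gamma\de$ inside each of the grouped terms $A_i$, $E_i$ --- and this does not change the substance of the argument.
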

\begin{proof}
By \eqref{step}, 
\begin{align}\label{blabla}
& q\kpod -q\kd= \gamma^k (q_*\kpod-q\kd)\nonumber,\\
 & v\kpod -v\kd = \gamma^k v_*\kpod +(\gamma^k -1) (v\kd)'-v\kd.
\end{align}
So if we define
\begin{align*}
&A_1:=\frac{1}{\delta}\nor{\EE_x \left[\gamma\de (\cos\delta -1)q \right]}, \\
&A_2:= \nor{\EE_x \left(\gamma\de \frac{\sin\delta}{\delta}e^{- \delta \Gamma_2 }v, \right)-v} \\
&A_3:=  \lv\!\lv\EE_x \left[ \gamma\de \frac{ \sin\delta}{\delta}\xi\de -\gamma\de \frac{\sin\delta}{2}\cC \nabla\Psi(q) 
 \right]\rv\!\rv_s\\
\end{align*}
 and 
\begin{align*}
&E_1:= \lv\!\lv  q - \EE_x\left( \gamma\de \frac{\sin\delta}{\delta}q \right)\rv\!\rv_s, \\
&E_2:= \lv\!\lv  \cC\nabla\Psi(q) +\EE_x \left( -\frac{\gamma\de}{2}\cos\delta \cC\nabla \Psi(q)
 -\frac{\gamma\de}{2}\cC\nabla \Psi(q_*\kpod) \right) \rv\!\rv_s, \\
&E_3:= \lv\!\lv \EE_x\left( \gamma\de \frac{\cos\delta}{\delta}e^{- \delta \Gamma_2} v\right) -\frac{1}{\delta} v +\Gamma_2 v 
+\EE_x \left[ \frac{\gamma\de -1}{\delta}e^{- \delta \Gamma_2} v \right] \rv\!\rv_s, \\
&E_4:= \frac{1}{\delta}\lv\!\lv  \EE_x\left[ \gamma\de \cos\delta \,\xi\de + \left( \gamma\de -1 \right) \xi\de \right]  \rv\!\rv_s ,\\
\end{align*}
by the definition of $G\de$ (equation \eqref{eqn:ddelta}) and using \eqref{v'} and \eqref{eqn:q*v*},  we obtain
$$
\|G\de_1(x)-G_1(x)\|_{s}\leq A_1+A_2+A_3 \quad \mbox{and} \quad \|G\de_2(x)-G_2(x)\|_{s}
\leq E_1+E_2+E_3 +E_4. 
$$
We will bound the $A_i$'s and the $E_i$'s one by one. To this end, we will repeatedly use the following simple bounds:
\begin{align}
&\gamma\de, \gamma^k \in \{0,1\} \quad \mbox{and}  \quad 0\leq \alpha\de\leq 1 \label{fact1}; \\
& \EE[\xi \de]=0; \label{fact2}\\
& \nor{\EE[(\alpha\de-1) \xi\de]}\leq \left[\EE(\alpha\de-1)^2\right]^{1/2}\left [\EE\nor{\xi\de}^2\right]^{1/2}
\les \delta^{5/2}(1+\nor{q}^4+\nor{v}^4). \label{fact3}
\end{align}
\eqref{fact3} follows from using   Bochner's inequality
 \footnote{Let $(X,\| \cdot \|)$ be a Banach space and
 $f\in L^1((\Omega, \mathcal{F}, \mu); X)$. Then $ \|\int  f d \mu \| \leq \int \|  f\|  d \mu $.  For a proof of the Bochner's inequality 
see \cite{RoecknerPrevot07}.
}
 and Cauchy-Schwartz first  and then \eqref{lemma6.2i} and \eqref{alpha-1}. Using \eqref{fact1},  it is straightforward to see that
$$
A_1 \leq \delta \nor{q}.
$$
As for $A_2$, 
\begin{align*}
A_2 & =  \lv \! \lv  \left( I-\EE_x (\alpha\de) \frac{\sin\delta}{\delta}e^{- \delta \Gamma_2 }\right) v  \rv\!\rv_s\\
 &\leq  \lv 1-\EE_x (\alpha\de)   \rv  \nor{v} + \lv \! \lv \EE_x (\alpha\de)  \left(  1-  \frac{\sin\delta}{\delta}
e^{- \delta \Gamma_2 }\right)v\rv\!\rv_s \\
     & \leq  \delta^2 (1+\nor{q}^4+\nor{v}^4)\nor{v}+\delta \nor{v}\leq \delta (1+\nor{q}^6+\nor{v}^6),
\end{align*}
having used, in the second inequality, \eqref{alpha-1} and \eqref{fact1}. 
 $A_3$ is bounded by using \eqref{fact2}, \eqref{fact3} and \eqref{lemma6.1i}:
\begin{align*}
A_3 & \leq \lv \! \lv \frac{\sin\delta}{\delta} \EE_x \left[ (\alpha\de -1)\xi\de +\xi\de \right]      \rv\!\rv_s+
 \lv \! \lv    \EE_x \delta \,\cC \nabla\Psi(q) \rv\!\rv_s \\
& \les  \delta^{5/2} (1+\nor{q}^4+\nor{v}^4)+\delta (1+\nor{q})\leq \delta  (1+\nor{q}^4+\nor{v}^4).
\end{align*}
Hence \eqref{d-G1} has been proven. We now come to estimating the $E_i$'s. Proceeding as in the bound for $A_2$ above we obtain:
\begin{align*}
E_1&\leq \lv \! \lv q-\EE_x(\alpha\de) q    \rv\!\rv_s  +\lv \! \lv  \EE_x(\alpha\de)\left( 1-\frac{\sin\delta}{\delta} \right)q   \rv\!\rv_s\\
& \leq \delta^2  (1+\nor{q}^4+\nor{v}^4)\nor{q}+ \delta^2 \nor{q}\leq \delta^2  (1+\nor{q}^6+\nor{v}^6).
\end{align*}
Also, 
\be
E_2& \leq  \lv \! \lv \cC\nabla\Psi(q) -\EE_x( \alpha\de) \cos\delta \, \cC\nabla\Psi(q)  \rv\!\rv_s + \lv \! \lv  \frac{1}{2} \EE_x 
\left[\alpha\de \cos\delta\, \cC \nabla\Psi(q)  -\alpha\de \cC \nabla\Psi(q_*\kpod)\right]\rv\!\rv_s\\
& \les \lv \! \lv ( 1- \EE_x( \alpha\de) )   \cC\nabla\Psi(q)    \rv\!\rv_s
+ \nor{(\cos\delta-1)\cC\nabla\Psi(q)}+ \lv \! \lv    \EE_x  \left(  \cC\nabla\Psi(q) - \cC \nabla\Psi(q_*\kpod) \right)   \rv\!\rv_s\\
& \les \delta^2   (1+\nor{q}^6+\nor{v}^6) +\delta\EE_x \nor{q_*\kpod -q} \stackrel{\eqref{lemma6.2iii}}{\les} \delta (1+\nor{q}^6+\nor{v}^6) ,
\ee
where the penultimate inequality is obtained by using \eqref{alpha-1} and \eqref{lemma6.1i}.

For the last two terms:
\be
E_3  &\leq \frac{1}{\delta} \lv \! \lv \EE_x (\alpha\de)(\cos\delta -1) e^{-  \delta \Gamma_2 } v   \rv\!\rv_s
+ \frac{1}{\delta} \lv \! \lv \EE_x (\alpha\de -1)  e^{- \delta \Gamma_2 } v  \rv\!\rv_s
+  \frac{1}{\delta} \lv \! \lv  \EE_x  \left(  e^{- \delta \Gamma_2 } -1 +\delta\Gamma_2  \right) v \rv\!\rv_s\\
& \stackrel{\eqref{fact1}}{\les} \delta \nor{v}+\frac{1}{\delta} \EE \lv \alpha\de -1 \rv \nor{v}\\
& \stackrel{\eqref{alpha-1}}{\les} \delta \nor{v}+\delta  (1+\nor{q}^4+\nor{v}^4) \nor{v}\les \delta  (1+\nor{q}^6+\nor{v}^6).
\ee
Finally, from \eqref{fact2} and \eqref{fact3},
\begin{align*}
E_4 &\leq   \frac{1}{\delta}   \lv \! \lv  \EE_x (\alpha\de \cos\delta \,\xi\de)  \rv\!\rv_s
+  \frac{1}{\delta}   \lv \! \lv  \EE_x (\alpha\de -1) \xi\de  \rv\!\rv_s \\
& \les \frac{1}{\delta} \nor{\EE_x\left[ (\alpha\de -1)\cos\delta \xi\de +\cos\delta\xi\de  \right]   }+
\frac{1}{\delta} \nor{\EE_x \left[(\alpha\de -1) \xi\de\right]}
\leq \delta^{3/2} 
 (1+\nor{q}^4+\nor{v}^4) .
\end{align*}
This concludes the proof.
\end{proof}
Let us now show that condition \eqref{sizeincrem} is satisfied as well. 
\begin{lemma}\label{Lemmasize}
Under Assumptions \ref{ass:1},  the chain $x\kd\in \mathcal{H}^s \times \mathcal{H}^s$ defined in \eqref{eqn:algK} satisfies
\begin{align}\label{lemmasize1}
\EE \left[ \nor{q\kpod- q\kd}\vert x\kd=x\right]\les \delta \left( 1+ \nor{q}+\nor{v} \right),
\end{align}
\begin{align}\label{lemmasize2}
\EE \left[ \nor{v\kpod- v\kd}\vert x\kd =x \right]\les \delta^{1/2} \left( 1+ \nor{q}^6+\nor{v}^6 \right).
\end{align}
In particular, \eqref{sizeincrem} holds with $r=1/2$ and $n=6$.
\end{lemma}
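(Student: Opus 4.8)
The plan is to estimate the two components of the increment separately, by combining the one-step decomposition \eqref{blabla}, the explicit proposal formulas \eqref{eqn:q*v*}--\eqref{v'}, the smoothing estimates of Lemma \ref{lemma:q-q*}, the bounds on $\cC \nabla \Psi$ collected in Lemma \ref{psiq-psiq*}, and the acceptance-probability estimate \eqref{alpha-1}. The bound \eqref{lemmasize1} for the $q$-component is immediate: by \eqref{blabla}, $q\kpod-q\kd=\gamma^k(q_*\kpod-q\kd)$ with $\gamma^k\in\{0,1\}$, so $\nor{q\kpod-q\kd}\leq\nor{q_*\kpod-q\kd}$ pointwise, and the claim follows at once from \eqref{lemma6.2iii} with $p=1$.

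For the $v$-component I would first rewrite the second identity in \eqref{blabla} as
\[ v\kpod-v\kd = \gamma^k\bigl(v_*\kpod-v\kd\bigr)+(1-\gamma^k)\bigl(-(v\kd)'-v\kd\bigr), \]
and treat the accepted and rejected contributions by different mechanisms. On the accepted event the increment is small pathwise: expanding $v_*\kpod$ by \eqref{eqn:q*v*} and $(v\kd)'$ by \eqref{v'}, the term $-\sin\delta\,q\kd$ is $O(\delta\nor{q})$; the term $\cos\delta\,(v\kd)'-v\kd=(\cos\delta\,e^{-\delta\Gamma_2}-I)v\kd+\cos\delta\,\xi\de$ contributes $O(\delta\nor{v})$ from the operator part --- here one uses that $\Gamma_2$ is bounded on $\Hel$, so that $\opnm{\cos\delta\,e^{-\delta\Gamma_2}-I}{s}{s}\les\delta$ --- together with a genuinely $O(\delta^{1/2})$ term coming from the OU noise $\xi\de$ via \eqref{lemma6.2i} with $p=1$; and the two $\cC \nabla \Psi$ terms are controlled by \eqref{lemma6.1i}, with \eqref{lemma6.2iii} used to pass from $q_*\kpod$ back to $q\kd$ in expectation, giving altogether $O(\delta(1+\nor{q}+\nor{v}))$. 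Summing these yields $\EE_x\nor{\gamma^k(v_*\kpod-v\kd)}\leq\EE_x\nor{v_*\kpod-v\kd}\les\delta^{1/2}(1+\nor{q}+\nor{v})$.

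The main obstacle is the rejected event. There $-(v\kd)'-v\kd=-(e^{-\delta\Gamma_2}+I)v\kd-\xi\de$ has size $\asymp\nor{v}$ and is therefore \emph{not} small pathwise; smallness must instead be extracted from the fact that rejections are rare. To do this I would condition on $\xi\de$: since, given $x\kd=x$, the variable $\gamma^k$ is Bernoulli with parameter $\alpha\de=\alpha(x,\xi\de)$, one has $\EE_x[(1-\gamma^k)\nor{(v\kd)'+v\kd}]=\EE_x[(1-\alpha\de)\nor{(v\kd)'+v\kd}]$. Cauchy--Schwarz, the estimate $(\EE_x|1-\alpha\de|^2)^{1/2}\les\delta^2(1+\nor{q}^4+\nor{v}^4)$ obtained from \eqref{alpha-1} with $p=2$, and the elementary bound $\EE_x\nor{(v\kd)'+v\kd}^2\les\nor{v}^2+\EE_x\nor{\xi\de}^2\les 1+\nor{v}^2$ then give a rejected contribution of order $\delta^2(1+\nor{q}^5+\nor{v}^5)$, which for $\delta\in(0,1/2)$ is dominated by $\delta^{1/2}(1+\nor{q}^6+\nor{v}^6)$. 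Adding the accepted and rejected estimates proves \eqref{lemmasize2}; finally \eqref{sizeincrem} with $r=1/2$ and $n=6$ follows from $\nors{x\kpod-x\kd}\leq\nor{q\kpod-q\kd}+\nor{v\kpod-v\kd}$ together with \eqref{lemmasize1}--\eqref{lemmasize2} and $\nor{q},\nor{v}\leq\nors{x}$.
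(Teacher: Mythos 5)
Your proof is correct and follows essentially the same route as the paper's: the $q$-increment via \eqref{blabla} and \eqref{lemma6.2iii}, and the $v$-increment by bounding every term pathwise by $O(\delta^{1/2})$ except the velocity sign-flip, which is controlled through the rarity of rejection via \eqref{alpha-1}. The only cosmetic difference is organizational — you split explicitly into accepted/rejected contributions and apply Cauchy--Schwarz after conditioning on $\xi\de$, whereas the paper substitutes the formulas for $v_*\kpod$ and $(v\kd)'$ directly and estimates term by term using $\EE\lv 1-\gamma\de\rv = 1-\EE(\alpha\de)$ — but the ingredients and exponents are identical.
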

\begin{proof}
 \eqref{lemmasize1} is a straightforward consequence of \eqref{blabla}, \eqref{fact1} and \eqref{lemma6.2iii}. In order to prove 
\eqref{lemmasize2} we start from \eqref{blabla} and we write 
\begin{align}\label{bit}
\EE \left[ \nor{v\kpod- v\kd}\vert x\kd=x\right]&=\EE \nor{\gamma\de v_* + (\gamma\de -1) v' -v}\\
& \les \EE\nor{\gamma\de \left( \sin\delta q + \delta \cC D\Psi (q)+\delta \cC D\Psi (q_*)\right)} \\
& + \EE \nor{\gamma\de \cos\delta v'- (1-\gamma\de)v' - v }.
\end{align}
By using \eqref{fact1}, \eqref{lemma6.1i} and \eqref{lemma6.2iii} we get
\begin{align}\label{bit1}
\EE\nor{\gamma\de \left( \sin\delta q + \delta \cC D\Psi (q)+\delta \cC D\Psi (q_*)\right)}\les \delta ( 1+ \nor{q}).
\end{align}
Notice that
\begin{align}\label{ops}
\EE \lv \gamma\de -1 \rv^{\ell} = 1-\EE (\alpha\de), \qquad \forall \ell\geq 1.
\end{align}
Therefore by  \eqref{v'} and  \eqref{fact1} and repeatedly using \eqref{ops}, 
\begin{align}
\EE \nor{\gamma\de \cos\delta v'- (1-\gamma\de)v' - v }&\les \EE \nor{\left[ \gamma\de \cos\delta - (1-\gamma\de) \right] \xi\de} \nonumber \\
& + \EE \nor{(1-\gamma\de) e^{-\delta \Gamma_2}v}+
\EE \nor{\gamma\de \cos\delta e^{-\delta \Gamma_2}v - v  } \nonumber \\
& \stackrel{\eqref{lemma6.2i}}{\les} \delta^{1/2} + \EE\lv 1-\alpha\de \rv \nor{v} \nonumber \\
&+\EE \lv 1-\gamma\de \cos \delta \rv \nor{ e^{-\delta \Gamma_2}v}  + \nor{ e^{-\delta \Gamma_2}v -v} \nonumber \\
& \stackrel{\eqref{alpha-1}}{\les} \delta^{1/2}+ \delta^2 (1+\nor{q}^4+\nor{v}^4) \nor{v} \nonumber \\
& + \EE\lv \gamma\de -1\rv\nor{e^{-\delta \Gamma_2}v}+
\EE\lv \gamma\de (\cos\delta -1) \rv \nor{e^{-\delta \Gamma_2}v}+ \delta \nor{v} \nonumber \\
& \les \delta^{1/2} (1+\nor{q}^6+\nor{v}^6)\label{bit2}.
\end{align}
Now \eqref{bit}, \eqref{bit1} and \eqref{bit2} imply \eqref{lemmasize2}.
\end{proof}

Finally, the a priori bound \eqref{dde} holds.
\begin{lemma}\label{lemma:sup}
Let Assumption \ref{ass:1} hold. Then the chain \eqref{eqn:algK} satisfies 
\begin{align}\label{wwp}
\sup_{\delta\in (0,1/2)}\left\{ \delta \EE \left[  \sum_{k\delta <T} \nors{x\kd}^{\ell}\right] \right\} < \infty \quad 
\mbox{for any integer }\ell\geq 1.
\end{align}
In particular, the bound \eqref{dde} holds  (with $\epsilon=1$ and for any moment of $\nors{x\kd}$).
\end{lemma}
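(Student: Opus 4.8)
The plan is to establish the uniform-in-$\delta$ bound \eqref{wwp} by a discrete Gronwall argument applied to the moments $u_k\de:=\EE\nors{x\kd}^{\ell}$. The starting point is the drift-martingale decomposition $x\kpod=x\kd+\delta G\de(x\kd)+\sqrt{2\delta\mathcal{S}}M\kd$. Taking $\ell$ a positive integer (it suffices to treat even $\ell$, the general case following by Jensen/Cauchy--Schwarz), I would expand $\nors{x\kpod}^{\ell}$ and take conditional expectation given $x\kd=x$. Using \eqref{sizeincremf} and \eqref{nonldrift} from Remark \ref{Remark} (both valid since Lemma \ref{lemma:d-G} gives \eqref{dde-G} with $a=1$, $p=6$), together with the increment bound \eqref{sizeincrem} from Lemma \ref{Lemmasize} for controlling the higher-order terms in the binomial expansion, one obtains an estimate of the schematic form
\begin{align*}
\EE\left[\nors{x\kpod}^{\ell}\,\big|\,x\kd=x\right]\les \nors{x}^{\ell}+\delta\left(1+\nors{x}^{\ell'}\right)
\end{align*}
for some $\ell'$ depending on $\ell$ and on the powers $p=6$, $n=6$ appearing in the earlier lemmata; the key structural point, exactly as in the MALA analysis of \cite{Pill:Stu:Thi:12,PST13}, is that the coefficient of the top-order term $\nors{x}^{\ell}$ on the right is $1$ (no amplification) while all genuinely larger powers come multiplied by a positive power of $\delta$.

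The subtlety is that the naive expansion produces a term $\nors{x}^{\ell'}$ with $\ell'>\ell$, which cannot be absorbed back into $\nors{x}^{\ell}$ without a dissipativity input. The resolution, following \cite{Pill:Stu:Thi:12}, is to use the dissipativity built into $G$: because $\Gamma_2$ is bounded and positive and $F(q)=q+\cC D\Psi(q)$ is Lipschitz with the linear growth controlled by Lemma \ref{lem:lipschitz+taylor}, the quadratic-form contribution $\langle x, G\de(x)\rangle_{s\times s}$ is bounded above by $C(1+\nors{x}^2)$ with the $\nors{x}^{\ell+2}$ terms carrying a sign that, after the $\delta$-scaling, allows the excess power to be dominated. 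More precisely one isolates the ``one-step drift contraction'' estimate $\EE[\nors{x\kpod}^2\mid x\kd=x]\le (1+C\delta)\nors{x}^2+C\delta$, iterates it to get $\sup_k\EE\nors{x\kd}^2 \les e^{CT}(1+\nors{x_0}^2)$ uniformly in $\delta$ for $k\delta\le T$, and then bootstraps to general even $\ell$ by the same mechanism, each time using the already-established lower moment bounds to handle the remainder terms. Summing $\sum_{k\delta<T}$ and multiplying by $\delta$ then gives $\delta\sum_{k\delta<T}\EE\nors{x\kd}^{\ell}\le T\sup_{k\delta\le T}\EE\nors{x\kd}^{\ell}<\infty$ uniformly in $\delta\in(0,1/2)$, which is \eqref{wwp}; taking $\epsilon=1$ and noting $p\vee n=6$ yields \eqref{dde} and the condition $1-\epsilon+(a\wedge r)=\tfrac12>0$ is satisfied.

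The main obstacle I anticipate is the bookkeeping in the moment expansion: one must carefully track which terms in the binomial/multinomial expansion of $\nors{x+\delta G\de(x)+\sqrt{2\delta\mathcal{S}}M\kd}^{\ell}$ are $O(\delta)$ versus $O(1)$, using that the martingale increment $M\kd$ has mean zero conditionally (so the cross term $\langle x, M\kd\rangle$ vanishes in expectation) and that its conditional moments are $O(1)$ — which itself requires the increment bound \eqref{sizeincrem} since $M\kd$ is defined through $x\kpod-x\kd-\delta G\de$. A secondary technical point is that $\mathcal{S}^{-1/2}$ appears in the definition \eqref{eqn:Mdelta} of $M\kd$, so one needs $\Gamma_2$ (hence $\mathcal{S}$) to be boundedly invertible on $\cH^s$, or alternatively one works directly with $x\kpod-x\kd$ rather than with $M\kd$, which is cleaner here. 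Apart from this, the argument is a routine adaptation of \cite[Lemma 4.4 and its proof]{Pill:Stu:Thi:12} and \cite[Section 3]{PST13}, and I would simply cite those for the details after indicating the Gronwall structure above.
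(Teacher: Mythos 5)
Your overall Gronwall structure is right, but the route you propose for the one-step moment estimate does not close, and the gap is exactly the one flagged in Remark \ref{rem:flip}. In your schematic bound $\EE[\nors{x\kpod}^{\ell}\mid x\kd=x]\les \nors{x}^{\ell}+\delta(1+\nors{x}^{\ell'})$ the exponent $\ell'$ is strictly larger than $\ell$, and the offending terms are not touched by the dissipativity of $\langle x,G(x)\rangle_{s\times s}$. They come from two sources: (i) the approximate drift is only controlled via $\nors{G\de(x)-G(x)}\les\delta(1+\nors{x}^6)$ (Lemma \ref{lemma:d-G}), so $\langle x,G\de(x)\rangle_{s\times s}$ carries an excess of order $\delta\nors{x}^{7}$ beyond $C(1+\nors{x}^2)$; and (ii), more seriously, on the rejection event the velocity is flipped, so $\nors{x\kpod-x\kd}$ is of order $\nor{v}$ (not $O(\delta^{1/2})$) with probability $1-\EE\alpha\de\les\delta^{2}(1+\nors{x}^{8})$, which injects terms like $\delta^{2}(1+\nors{x}^{10})$ into the conditional second moment of the increment, and hence of $M\kd$. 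After summing over the $T/\delta$ steps, closing the Gronwall for the $\ell$-th moment therefore requires an a priori bound on moments of order $\ell'>\ell$: the induction you describe as ``using the already-established lower moment bounds'' actually runs upwards and never terminates. This is precisely why the linear-growth argument of \cite{Pill:Stu:Thi:12,PST13} that you invoke does not transfer. A secondary issue is that \eqref{sizeincrem} only gives the first conditional moment of the increment, whereas your quadratic-variation term needs second and higher conditional moments, which are not available without the analysis above.

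The paper avoids the drift--martingale decomposition entirely and argues pathwise on the algorithm's constituent maps. Since $\sR^{\delta}$ is an exact isometry of $\nors{\cdot}$ and $\Theta_1^{\delta/2}$ satisfies the pathwise bound $\nors{\Theta_1^{\delta/2}(x)}^2\leq(1+c\delta)\nors{x}^2+c\delta$ (using only the linear growth of $\cC\nabla\Psi$), one gets $\nors{\chi\de(x)}^2\leq(1+c\delta)\nors{x}^2+c\delta$ for every realization; and because even powers are blind to the sign flip, the same bound holds on \emph{both} branches of the accept/reject, with $(x\kd)'=(q\kd,(v\kd)')$ in place of $x$. The only stochastic input is then $\nors{(x\kd)'}^2\leq\nors{x\kd}^2+\nor{\xi\de}^2+2\langle e^{-\delta\Gamma_2}v\kd,\xi\de\rangle_s$, whose cross term vanishes by the conditional independence of $v\kd$ and $\xi\de$ and whose quadratic term contributes $O(\delta)$ by \eqref{lemma6.2i}. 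This yields $\EE\nors{x\kpod}^2\leq(1+c\delta)\EE\nors{x\kd}^2+c\delta$ with no higher moments appearing, and the same mechanism bootstraps to all even powers $2j$ (odd powers then follow from $\nors{x}^{\ell}\leq 1+\nors{x}^{2\ell}$). If you want to salvage your write-up, replace the drift--martingale expansion by this pathwise argument; the final summation step you give is then correct.
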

\begin{remark}\label{rem:flip} \textup{
Before proving the above lemma, let us make some comments. First of all, the estimate of condition \eqref{dde} is needed mainly because the process has not been started in stationarity and hence  it is not stationary. For the same reason an analogous estimate was needed in  \cite{PST13}, as well. However there  the approximate drift grows linearly in $x$ (see \cite[Equation (25)]{PST13}) and this is sufficient to prove an estimate of the type \eqref{dde}.  Here, because in case of rejection of the proposed move the sign of the velocity is flipped, the approximate drift grows faster than linearly (see Lemma \ref{lemma:d-G} and \eqref{nonldrift}).  To deal with the change of sign of the velocity we will observe that such a change of sign doesn't matter if we look at even powers of $x\kd$ -- what matters is that in moving from $x\kd$ to $x\kpod$ we always ``move a short distance''--  and we will exploit the  independence of $v\kd$ and $\xi\de$, once $x\kd$ is given. }
\end{remark}
\begin{proof} If we show that \eqref{wwp} is true for every even $\ell$  then it is true for every $\ell\geq 1$. 
Indeed
$$
\nors{x}^{\ell}\leq \nors{x}^{2\ell}+1 \qquad \mbox{so} \qquad \delta \sum_{k\delta <T}
\nors{x}^{\ell} \les \delta \sum_{k\delta <T} \nors{x}^{2\ell}+1< \infty. 
$$
 Throughout this proof $c$ will be a generic positive constant.
We begin by recalling the definition of  the map $\Theta_1^{\delta/2}$:
$$
\Theta_1^{\delta/2}(q,p)=\left(q, v-\frac{\delta}{2} \cC \nabla \Psi(q)\right),
$$
hence
\begin{align*}
\nors{\Theta_1^{\delta/2}(x)}^2&= \nor{q}^2+\nor{v}^2+\frac{\delta^2}{4} \nor{\cC \nabla\Psi(q)}^2-\delta \langle v, \cC\nabla\Psi(q)\rangle\\
& \stackrel{\eqref{lemma6.1i}}{\leq}  \nor{q}^2+\nor{v}^2+ c \, \delta^2 ( 1+\nor{q}^2) + c\,\delta \nor{v}(1+\nor{q})\\
& \leq (1+c\, \delta)  \nors{x}^2+ c\,\delta.
\end{align*}
Because $R^{\delta}$ is a rotation, it preserves the norm, so also
\begin{align}\label{(B)}
\nors{\chi^{\delta}(x)}^2=\nors{\Theta^{\delta/2}_1\circ R^{\delta}\circ \Theta^{\delta/2}_1(x)}^2 \leq (1+c\, \delta)  \nors{x}^2+ c\,\delta.
\end{align}
Now notice that by definition $x\kpod=(q\kpod,v\kpod)$ is either equal to $\chi^{\delta}(q\kd, (v\kd)')=(q_*\kpod, v_*\kpod)$ (if the proposal is accepted) or to
$(q\kd, -(v\kd)')$ (if the proposal is rejected).  Thanks to \eqref{(B)}, in any of these two cases we have
$$
\nors{x\kpod}^2\leq (1+c\delta) \nors{(x\kd)'}^2+c\delta,
$$
where 
$
(x\kd)'=((q\kd)', (v\kd)')=(q\kd, (v\kd)')$. By \eqref{v'}, 
\be
\nors{(x\kd)'}^2 & \leq \nor{q\kd}^2+\nor{v\kd}^2+\nor{\xi\de}^2+2\langle e^{- \delta \Gamma_2}v\kd,\xi\de\rangle \nonumber\\
&= \nors{x\kd}^2+\nor{\xi\de}^2+ 2 \langle e^{- \delta \Gamma_2}v\kd,\xi\de\rangle.\nonumber
\ee
Therefore
\begin{align*}
\EE\nors{x\kpod}^2 & = \EE\{\EE [\nors{x\kpod}^2\vert x\kd] \}\\
& \leq (1+c\,\delta) \EE\nor{x\kd}^2 + (1+c\delta)\, \EE \nor{\xi^{\delta}}^2 \\
& + (1+c\, \delta) \EE\{ \EE[\langle e^{- \delta \Gamma_2 }v\kd , \xi^{\delta} \rangle\vert x\kd] \}+ c\, \delta.
\end{align*}
By  the conditional independence of  $v\kd$ and $\xi^{\delta}$ together with \eqref{fact2}
$$
\EE\{ \EE[\langle e^{- \delta \Gamma_2 }v\kd , \xi^{\delta} \rangle\vert x\kd] \}=0;
$$
hence, using \eqref{lemma6.2i},  we obtain
$$
\EE \nors{x\kpod}^2\leq (1+c \, \delta) \EE \nors{x\kd}^2 + c\, \delta.
$$
Iterating the above inequality  leads to 
$$
\EE \nors{x\kpod}^2\leq (1+c \, \delta) ^{\left[ T/\delta\right]} \EE \nors{x^0}^2 + c\, \delta (1+c \, \delta) ^{\left[ T/\delta\right]} +c \, \delta, 
$$
which implies
$$
\delta \sum_{k\delta< T}\EE \nors{x\kd}^2 < \infty.
$$
We now need to show that for any $j>1$, 
$$
\delta \sum_{k\delta< T}\EE \nors{x\kd}^{2j} < \infty.
$$
By the same reasoning as before we start with observing that
\begin{align*}
\nors{\chi\de(x)}^{2j} & \leq (1+c\, \delta) \nors{x}^{2j} + c\, \delta + 2 \sum_{l=1}^{j-1} (1+c\, \delta)^{l} 
\nors{x}^{2l} \delta^{j-l}\\
& \leq (1+c\, \delta) \nors{x}^{2j} + c\, \delta
\end{align*}
 (notice  that in the above $j-l\geq 1$ because $1\leq l \leq j-1 $). Hence
$$
\EE\nors{x\kpod}^{2j}\leq (1+c\, \delta) \EE \nors{(x\kd)')}^{2j}+c\delta.
$$
From \eqref{v'} we have
\begin{align*}
\nors{(x\kd)'}^{2j}&\leq \nors{x\kd}^{2j}+ \nors{\xi\de}^{2j} +
 c \left( \langle e^{- \delta \Gamma_2 }v\kd, \xi\de \rangle \right)^j\\
&+c \sum_{l=1}^{j-1}\nors{x\kd}^{2l} \nor{\xi\de}^{2(j-l)}+ c\sum_{l=1}^{j-1}
\nors{x\kd}^{2l} \left( \langle e^{- \delta \Gamma_2 }v\kd, \xi\de \rangle \right)^{j-l}\\
&+ c \sum_{l=1}^{j-1}\nor{\xi\de}^{2l}\left( \langle e^{- \delta \Gamma_2 }v\kd, \xi\de \rangle \right)^{j-l}.
\end{align*}
Using again  the conditional independence of  $v\kd$ and $\xi\de$, for any $l>1$, 
\begin{align*}
\EE \left\{  \EE\left[ \left(\langle e^{- \delta \Gamma_2 }v\kd, \xi\de \rangle \right)^l \vert x\kd =x\right]  \right\}
&\leq \EE \left\{ \EE\left[  \nor{v\kd}^l \nor{\xi\de}^l \vert x\kd=x \right]  \right\}\\
& \leq c\, \delta^{l/2} \,\EE\nor{v\kd}^l\leq c\, \delta \,\EE\nor{v\kd}^l.
\end{align*}
Therefore, 
$$
\EE \nors{(x\kd)'}^{2j} \leq \EE \nors{x\kd}^{2j}+\delta^j+\delta\EE(1+\nors{x\kd}^{2j})
$$
hence
$$
\EE \nors{x\kpod}^{2j} \leq  (1+c\, \delta)  \EE\nors{x\kd}^{2j}+c\,\delta
$$
and we can conclude as before. 
\end{proof}
\subsection{Analysis of the noise}\label{sec:analysisNoise}
Let us start with defining 
\begin{align}\label{crop}
D\de(x):=\EE \left[ M\kd \otimes M\kd \vert x\kd =x \right].
\end{align}
This section is devoted to proving the invariance principle Lemma \ref{lemmanoise} below, as a consequence of the following
 Lemma \ref{lemman1} and Lemma \ref{lemman2}, which we prove in Appendix B.  In order to state such lemmata, consider the following set of conditions:
\begin{cond}\label{cond2} 
The Markov chain $x\kd\in \mathcal{H}^s\times \mathcal{H}^s$ defined in \eqref{eqn:algK} satisfies:
\begin{description} 
\item[(i)]  There exist two integers $d_1,d_2 \geq 1$ and two real numbers $b_1,b_2 >0$ such that  
\begin{align}
& \lv \langle \hat{\varphi}_j^{\ell}, D\de(x)\, \hat{\varphi}_i^{\bar{\ell}} \rangle_{s\times s} - 
 \langle \hat{\varphi}_j^{\ell}, \mathfrak{C}_s\, \hat{\varphi}_i^{\bar{\ell}} \rangle_{s\times s} \rv \les \delta^{b_1} (1+ \nors{x}^{d_1})
\qquad \forall i,j \in \mathbb{N} \,\,\mbox{ and } \,\, \ell,\bar{\ell}\in {1,2};
 \label{lemmanoise1gen} \\
&  \lv       \textup{Trace}_{\mathcal{H}^s\times \mathcal{H}^s}(D\de(x) )- 
 \textup{Trace}_{\mathcal{H}^s\times \mathcal{H}^s}(\mathfrak{C}_s)      \rv \les \delta^{b_2} (1+ \nors{x}^{d_2}) ,                                       \label{lemmanoise2gen} 
\end{align}
where $\mathfrak{C}_s$ is the covariance operator defined in \eqref{mfcs}. 
\item[(ii)]   There exist four real numbers $\eta_1, \eta_2, \eta_3, \eta_4$ such that 
\be
b_1+1-\eta_1>0,  \quad
 b_2+1-\eta_2>0,  \quad
 4r-\eta_3>0, \quad 4-\eta_4>0.
\ee
Moreoever, the bound
\begin{align}\label{dde22}
\sup_{\delta\in (0, 1/2)}\left\{  \delta^{\eta}\, \EE \left[ \sum_{k\delta\leq T} \| x\kd \|_{s\times s}^m \right]    \right\}< \infty
\end{align}
holds for $\eta=\min_{i=1\dots 4}\{\eta_i\} $ and  $m=\max \{d_1, d_2, 4n, 4p\}$.  In the above $ n$ and $r$ are as in Condition \ref{drift-martCond}. 
\end{description}
\begin{remark}
Because $\nors{x}^b\les \nors{x}^d+1$ for all $d\geq b$, if \eqref{dde22} holds with $m=d$, then it also hold for any $m\leq d$. 
\end{remark}

\end{cond}

\begin{lemma}\label{lemman1} If \eqref{sizeincrem} is satisfied with  $r>1/4$ then the estimates \eqref{dde-G} and \eqref{sizeincrem}  together with Conditions \ref{cond2} imply the invariance principle Lemma \ref{lemmanoise}. 
\end{lemma}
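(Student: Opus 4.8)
The plan is to recognise $\tilde B\de$ as an $\cH^s\times\cH^s$-valued martingale up to an asymptotically negligible correction and to apply a martingale invariance principle, along the lines used in the proof of \cite[Lemma 3.5]{PST13}. Write $N\de(t):=\sqrt{2\mathcal S\delta}\sum_{j=0}^{k-1}M^{j,\delta}$ for $t_k\le t<t_{k+1}$; by \eqref{eqn:Mdelta} the sequence $\{N\de(t_k)\}_k$ is a martingale for the filtration $\mathcal F\kd$, with increments $\Delta_k:=\sqrt{2\mathcal S\delta}\,M\kd=x\kpod-x\kd-\delta G\de(x\kd)$, and $\tilde B\de(t)=N\de(t)+\sqrt{2\mathcal S/\delta}\,(t-t_k)M\kd$. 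Since $|t-t_k|\le\delta$, the correction term is bounded in $\cH^s\times\cH^s$-norm by $\sup_{k\delta\le T}\|\Delta_k\|_{s\times s}$, so once we show this supremum tends to zero in probability (which falls out of the negligibility step below) it suffices to prove $N\de\Rightarrow B$ in $C([0,T];\cH^s\times\cH^s)$ with $B=(0,B_2)$; then $\tilde B\de$ has the same limit, which is the invariance principle of Lemma \ref{lemmanoise}. To apply the martingale invariance principle I must verify convergence of the conditional quadratic variation to a deterministic operator linear in $t$, and asymptotic negligibility of the increments.

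For the quadratic variation, set $\langle\!\langle N\de\rangle\!\rangle_t:=2\delta\sum_{t_k\le t}\mathcal S^{1/2}D\de(x\kd)\mathcal S^{1/2}$ with $D\de$ as in \eqref{crop}; the claim is that $\langle\!\langle N\de\rangle\!\rangle_t$ converges in probability, in trace norm on $\cH^s\times\cH^s$, to $t$ times the covariance operator of $B$. Pairing with two fixed basis vectors $\hat\varphi_j^\ell,\hat\varphi_i^{\bar\ell}$ and using \eqref{lemmanoise1gen}, the difference between $\langle\hat\varphi_j^\ell,\langle\!\langle N\de\rangle\!\rangle_t\hat\varphi_i^{\bar\ell}\rangle_{s\times s}$ and its limit is controlled, up to an $O(\delta)$ term coming from $\lfloor t/\delta\rfloor\delta-t$, by $\delta^{b_1}\sum_{k\delta\le t}(1+\|x\kd\|_{s\times s}^{d_1})$; taking expectations and inserting the a priori bound \eqref{dde22} bounds this by $O(\delta^{b_1+1-\eta_1})$, which vanishes because $b_1+1-\eta_1>0$. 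The trace estimate \eqref{lemmanoise2gen}, treated the same way with $b_2,d_2,\eta_2$, controls $\mathrm{Trace}_{\cH^s\times\cH^s}(\langle\!\langle N\de\rangle\!\rangle_t)$; combined with the entrywise convergence this upgrades the convergence to trace norm and rules out escape of mass to infinity. This trace-norm control is precisely the ingredient a Hilbert-space martingale central limit theorem needs and a finite-dimensional one does not, which is why Condition \ref{cond2} carries both \eqref{lemmanoise1gen} and \eqref{lemmanoise2gen}.

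For asymptotic negligibility, from $\|\Delta_k\|_{s\times s}\le\|x\kpod-x\kd\|_{s\times s}+\delta\|G\de(x\kd)\|_{s\times s}$ together with \eqref{sizeincrem} and \eqref{nonldrift} one obtains $\EE[\|\Delta_k\|_{s\times s}^4\mid x\kd=x]\les(\delta^{4r}+\delta^4)(1+\|x\|_{s\times s}^{m})$ with $m=\max\{4n,4p\}$. Summing over $k\delta\le T$ produces $\sim 1/\delta$ terms, so after absorbing the polynomial growth of $\|x\kd\|_{s\times s}$ with \eqref{dde22} one gets $\sum_{k\delta\le T}\EE\|\Delta_k\|_{s\times s}^4\les\delta^{(4r\wedge 4)-\eta_3}$, which tends to zero exactly because $r>1/4$ (equivalently $4r-\eta_3>0$ in Condition \ref{cond2}, together with $4-\eta_4>0$). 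This simultaneously yields $\sup_{k\delta\le T}\|\Delta_k\|_{s\times s}\to0$ in probability (used above for the correction term), the Lindeberg bound $\sum_{k\delta\le T}\EE[\|\Delta_k\|_{s\times s}^2\mathbf 1_{\{\|\Delta_k\|_{s\times s}>\epsilon\}}]\to0$, and the fourth-moment estimate on $N\de(t)-N\de(s)$ needed for tightness in $C([0,T];\cH^s\times\cH^s)$ via Kolmogorov's criterion. With the quadratic variation limit and these estimates in hand, the martingale invariance principle identifies the weak limit of $N\de$, hence of $\tilde B\de$, as the Brownian motion $B=(0,B_2)$, which is Lemma \ref{lemmanoise}.

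I expect the main obstacle to be the trace-norm (rather than merely weak-operator) convergence of the conditional quadratic variation: getting it forces one to use both bounds in Condition \ref{cond2}(i) together with the a priori moment control \eqref{dde22}, and the whole argument works only because the exponent bookkeeping in Condition \ref{cond2}(ii) is arranged so that every error term is a strictly positive power of $\delta$; the hypothesis $r>1/4$ enters exactly at the point where the summed fourth moments of the martingale increments must vanish.
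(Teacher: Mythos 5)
Your proof is correct and follows essentially the same route as the paper: the paper invokes a Hilbert-space martingale invariance principle (Theorem 5.1 of Berger) whose hypotheses are exactly the three limits you verify — entrywise convergence of the conditional covariance via \eqref{lemmanoise1gen}, trace convergence via \eqref{lemmanoise2gen}, and a Lindeberg-type condition reduced by Cauchy--Schwarz/Markov to a fourth-moment bound on $M\kd$ obtained from \eqref{sizeincrem} and \eqref{nonldrift} — with the a priori bound \eqref{dde22} absorbing the polynomial moments and $r>1/4$ entering precisely where the summed fourth moments must vanish. Your extra remarks on the interpolation correction and tightness are consistent with, and subsumed by, the cited theorem.
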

\begin{proof}
See Appendix B.
\end{proof}

\begin{lemma}\label{lemman2}
Under Assumption \ref{ass:1}, the estimates \eqref{lemmanoise1gen} and \eqref{lemmanoise2gen} hold with $b_1=b_2=1/6$ and $d_1=d_2=10$.
\end{lemma}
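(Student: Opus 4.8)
The plan is to compute the second-moment operator $D^\delta(x)=\EE[M^{k,\delta}\otimes M^{k,\delta}\mid x^{k,\delta}=x]$ explicitly enough to compare it with $\mathfrak{C}_s$ coordinate by coordinate and in trace. Recall from \eqref{eqn:Mdelta} that $\sqrt{2\delta}\,\mathcal{S}^{1/2}M^{k,\delta}=x^{k+1,\delta}-x^{k,\delta}-\delta G^\delta(x^{k,\delta})$, so the increment $x^{k+1,\delta}-x^{k,\delta}$ is the basic object. Using \eqref{step}, \eqref{eqn:q*v*} and the representation \eqref{v'} of $v'$ in terms of $v$ and $\xi^\delta$, I would write
\be
q^{k+1,\delta}-q^{k,\delta}&=\gamma^\delta\bigl(\sin\delta\,\xi^\delta+(\cos\delta-1)q+\sin\delta\,e^{-\delta\Gamma_2}v-\tfrac{\delta}{2}\sin\delta\,\cC\nabla\Psi(q)\bigr),\\
v^{k+1,\delta}-v^{k,\delta}&=\gamma^\delta\bigl(\cos\delta\,\xi^\delta+\cdots\bigr)-(1-\gamma^\delta)\bigl(e^{-\delta\Gamma_2}v+\xi^\delta\bigr)-v+\cdots,
\ee
subtract $\delta G^\delta$, and isolate the leading stochastic term. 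The key observation is that the dominant contribution to $M^{k,\delta}$ comes from $\xi^\delta$: all other pieces are either deterministic given $x$ (hence killed, to leading order, by subtracting the conditional mean $\delta G^\delta$) or are higher order in $\delta$. Since $\xi^\delta=\int_0^\delta e^{-(\delta-u)\Gamma_2}\sqrt{2\Gamma_2}\,dB_2(u)$ has covariance $\cC\bigl(I-e^{-2\delta\Gamma_2}\bigr)=2\delta\Gamma_2\cC+O(\delta^2)$ in $\h$, after the $\sqrt{2\delta\mathcal{S}}$ and $\mathcal{S}^{-1/2}$ normalizations the $v$-component of $D^\delta(x)$ is $\cC_s+O(\delta^{\gamma})$ and the $q$-component is $O(\delta^{\gamma})$, matching $\mathfrak{C}_s$ which by \eqref{mfcs} is $\cC_s$ on the $v$-slot and $0$ on the $q$-slot.

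Concretely, I would expand $D^\delta(x)$ as a sum of terms, each of which is a conditional expectation of an outer product of two of the pieces above; by the conditional independence of $v$ and $\xi^\delta$ given $x^{k,\delta}$ (used already in Lemma \ref{lemma:sup}) and $\EE[\xi^\delta]=0$, the cross terms between $\xi^\delta$ and the $x$-measurable pieces vanish. The surviving terms are (a) the $\xi^\delta\otimes\xi^\delta$ term, which gives $\cC_s(I-e^{-2\delta\Gamma_2})/(2\delta)$ in the appropriate slots — Taylor expanding in $\delta$ gives $\cC_s+O(\delta)$; (b) terms involving $(\alpha^\delta-1)$ or $(\gamma^\delta-1)$, which by Lemma \ref{lemma:alpha-1} and \eqref{ops} contribute at most $\EE_x|1-\alpha^\delta|^{1/2}$-type factors, i.e. $O(\delta)(1+\|q\|_s^{c}+\|v\|_s^{c})$ after Cauchy–Schwarz with the moment bounds \eqref{lemma6.2i}, \eqref{lemma6.2ii}; (c) the terms $\delta G^\delta\otimes\xi^\delta$, $\delta G^\delta\otimes\delta G^\delta$ etc., which are $O(\delta)$ using \eqref{nonldrift}. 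Tracking the powers of $\|x\|_s$ through products of these bounds (each factor of $\cC\nabla\Psi$ or $G^\delta$ costing a power, squared when an outer product of two such appears, and doubled again when Cauchy–Schwarz is applied against an $\alpha^\delta-1$ factor carrying $\|x\|_s^4$) is where the exponent $d_1=d_2=10$ comes from, and the worst decay rate among all the error terms is $\delta^{1/6}$ — I expect the $\delta^{1/6}$ to arise from the interplay of the $\delta^{1/2}$-type increment bound in \eqref{bit2}/Lemma \ref{Lemmasize} with the $\delta^2$ from \eqref{alpha-1} raised to fractional powers inside a Hölder estimate, optimized to balance against the worst term. For \eqref{lemmanoise1gen} I fix basis vectors $\hat\varphi_j^\ell,\hat\varphi_i^{\bar\ell}\in\h^s\times\h^s$ and pair the estimate above against them, which is immediate since the bound is already operator-level; for \eqref{lemmanoise2gen} I sum the diagonal entries, using $\tr_{\h^s}(\cC_s)=\sum_j\lambda_j^2 j^{2s}<\infty$ from Assumption \ref{ass:1} to make the trace of the leading term finite and to control the trace of the remainder (here one must be slightly careful that the error estimate, after tracing, still converges — this uses that the per-mode error decays fast enough in $j$, inherited from the trace-class structure of $\cC_s$ and the fact that the $\xi^\delta$-covariance is $\cC_s$-like).

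The main obstacle I anticipate is bookkeeping rather than conceptual: correctly enumerating all the cross terms in the expansion of the outer product $M^{k,\delta}\otimes M^{k,\delta}$ — there are of order a dozen — and for each one extracting both the sharp power of $\delta$ and the sharp power of $\|x\|_{s\times s}$, then taking the minimum rate ($\delta^{1/6}$) and maximum power ($10$) across all of them. A secondary subtlety is the trace estimate \eqref{lemmanoise2gen}: the pointwise-in-$j$ bounds must be summable, so one needs to keep the $\lambda_j^2$ (equivalently $j^{-2\kappa}$) weights visible throughout rather than bounding them by a constant; this is exactly the same mechanism that makes $\tr_{\h^s}(\cC_s)$ finite under $\kappa>\tfrac12$ and $s<\kappa-\tfrac12$, so it goes through, but it must be done with care since a naive operator-norm bound would lose summability. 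Throughout, the technical inputs are all already available: Lemma \ref{lemma:alpha-1} for $\alpha^\delta$, Lemma \ref{lemma:q-q*} for moments of $\xi^\delta$, $(v^{k,\delta})'$ and $q_*^{k+1,\delta}-q^{k,\delta}$, Lemma \ref{psiq-psiq*} for $\cC\nabla\Psi$, and the conditional independence of $v^{k,\delta}$ and $\xi^\delta$.
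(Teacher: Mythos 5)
Your plan follows the paper's proof in all essentials: split $M^{k,\delta}$ into its $q$- and $v$-components, show $\EE_x\|M_1^{k,\delta}\|_s^2\lesssim\delta(1+\|x\|_{s\times s}^2)$ so that the $q$-slot of $D^{\delta}(x)$ matches the zero block of $\mathfrak{C}_s$, and write $M_2^{\delta}(x)=(2\delta\Gamma_2)^{-1/2}\bigl(R^{\delta}+\xi^{\delta}\bigr)$, with $\xi^{\delta}$ producing $\cC_s$ up to $O(\delta^2)$ and $R^{\delta}$ collecting the accept--reject corrections. Two points of your outline need tightening. First, the $(\gamma^{\delta}-1)$-terms are \emph{not} $O(\delta)$ once the $1/(2\delta)$ normalization of $M\otimes M$ is accounted for: the bottleneck is the cross term $\frac{1}{\delta}\EE\langle R^{\delta},\xi^{\delta}\rangle_s$, whose worst contribution comes from $(\gamma^{\delta}-\EE\gamma^{\delta})(\cos\delta+1)v'$. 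Because $\gamma^{\delta}$ and $v'$ are both functions of $\xi^{\delta}$, one estimates $\EE\bigl[|\gamma^{\delta}-1|^2\|v'\|_s^2\bigr]\le\bigl(\EE|\gamma^{\delta}-1|^3\bigr)^{2/3}\bigl(\EE\|v'\|_s^6\bigr)^{1/3}\lesssim\delta^{4/3}(1+\|x\|_{s\times s}^8)$, using $\EE|\gamma^{\delta}-1|^{\ell}=1-\EE\alpha^{\delta}$ together with Lemma \ref{lemma:alpha-1}; Cauchy--Schwarz against $(\EE\|\xi^{\delta}\|_s^2)^{1/2}\lesssim\delta^{1/2}$ then yields $\frac{1}{\delta}\,\delta^{2/3}\,\delta^{1/2}=\delta^{1/6}$. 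The exponent $10$ enters through the companion term $-\EE[(\gamma^{\delta}-1)f]+[(\EE\gamma^{\delta})-1]f$ with $f=(\cos\delta+1)v'-\frac{\delta}{2}\cC\nabla\Psi(q_*)$, whose second moment is controlled in part by $(\EE|1-\alpha^{\delta}|)^2\,\EE\|f\|_s^2\lesssim\delta^4(1+\|x\|_{s\times s}^8)(1+\|x\|_{s\times s}^2)$. The increment bound of Lemma \ref{Lemmasize} plays no role here. Second, the trace estimate \eqref{lemmanoise2gen} is not obtained by summing the per-mode bounds \eqref{lemmanoise1gen} (which, as you correctly observe, would not be summable); it is derived globally from $\tr_{\h^s\times\h^s}(D^{\delta}(x))=\EE_x\|M^{k,\delta}\|_{s\times s}^2$ and the same $R^{\delta}$--$\xi^{\delta}$ decomposition, so the per-mode $\lambda_j^2$-bookkeeping you anticipate is unnecessary. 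With these adjustments your argument coincides with the paper's.
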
 
\begin{proof}
See Appendix B.
\end{proof}

\begin{lemma}\label{lemmanoise}
Under Assumption \ref{ass:1},  the rescaled noise process
$ \mathcal{H}^s\times \mathcal{H}^s \ni \tilde{B}^{\delta}$ defined in \eqref{rescnoise}, converges weakly in 
$C([0,T];\mathcal{H}^s\times \mathcal{H}^s)$  to $\mathcal{H}^s\times \mathcal{H}^s \ni
B=(0, B_2)$ where $B_2$ is a $\mathcal{H}_s$-valued, mean zero  $\cC_s$ Brownian motion.
\end{lemma}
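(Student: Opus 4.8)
The plan is to deduce this directly from the two technical ingredients stated just above, namely Lemma \ref{lemman1} and Lemma \ref{lemman2}, combined with the a priori bound already established in Lemma \ref{lemma:sup}. First I would recall the hypotheses of Lemma \ref{lemman1}: it requires that the size-of-increments estimate \eqref{sizeincrem} hold with $r>1/4$, that the drift estimate \eqref{dde-G} hold, and that Condition \ref{cond2} be satisfied; under these it produces exactly the weak convergence asserted in Lemma \ref{lemmanoise}. So the proof amounts to verifying each of these in turn.

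The drift estimate \eqref{dde-G} with $a=1$, $p=6$ is Lemma \ref{lemma:d-G}; the increment estimate \eqref{sizeincrem} with $r=1/2>1/4$, $n=6$ is Lemma \ref{Lemmasize}. It remains to check Condition \ref{cond2}. Part (i) of that condition — the entrywise and trace-wise comparison of the conditional second-moment operator $D\de(x)$ with the target covariance $\mathfrak{C}_s$ — is precisely Lemma \ref{lemman2}, which gives it with $b_1=b_2=1/6$ and $d_1=d_2=10$. For part (ii) I would exhibit explicit exponents $\eta_i$: take $\eta_1=\eta_2=\eta_3=\eta_4=1$, so that $b_1+1-\eta_1=1/6>0$, $b_2+1-\eta_2=1/6>0$, $4r-\eta_3=2-1=1>0$ (using $r=1/2$), and $4-\eta_4=3>0$; hence $\eta=\min_i\eta_i=1$. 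With $m=\max\{d_1,d_2,4n,4p\}=\max\{10,10,24,24\}=24$, the required bound \eqref{dde22} is then exactly the statement of Lemma \ref{lemma:sup} with $\ell=24$ (that lemma gives \eqref{wwp}, i.e. \eqref{dde22} with $\epsilon=1$, for every integer $\ell\ge 1$). Thus Condition \ref{cond2} holds in full.

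Having verified all the hypotheses of Lemma \ref{lemman1}, its conclusion is the assertion of Lemma \ref{lemmanoise}, and the proof is complete. The only genuinely substantive work is hidden in Lemma \ref{lemman2} (the identification of the quadratic variation of the rescaled martingale with $\mathfrak{C}_s$, where one must track the $\Gamma_2$-dependence coming from the Ornstein--Uhlenbeck noise $\xi\de$ and the rejection/velocity-flip terms) and in the abstract martingale invariance principle of Lemma \ref{lemman1}; the present argument is purely a bookkeeping assembly of exponents, and the main point to be careful about is that the powers $p,n$ appearing in \eqref{dde-G}, \eqref{sizeincrem} are large enough that $m=24$ is still covered by Lemma \ref{lemma:sup}, which it is since that lemma holds for \emph{all} moments.

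\begin{proof}[Proof of Lemma \ref{lemmanoise}]
We apply Lemma \ref{lemman1}. Its hypothesis \eqref{dde-G} holds with $a=1$, $p=6$ by Lemma \ref{lemma:d-G}, and \eqref{sizeincrem} holds with $r=1/2>1/4$, $n=6$ by Lemma \ref{Lemmasize}. It remains to verify Condition \ref{cond2}. Part (i), the estimates \eqref{lemmanoise1gen} and \eqref{lemmanoise2gen}, holds with $b_1=b_2=1/6$ and $d_1=d_2=10$ by Lemma \ref{lemman2}. For part (ii) choose $\eta_1=\eta_2=\eta_3=\eta_4=1$; then
\be
b_1+1-\eta_1=\tfrac16>0,\quad b_2+1-\eta_2=\tfrac16>0,\quad 4r-\eta_3=1>0,\quad 4-\eta_4=3>0,
\ee
so that $\eta=\min_i\eta_i=1$, while $m=\max\{d_1,d_2,4n,4p\}=24$. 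The bound \eqref{dde22} for this $\eta$ and $m$ is exactly \eqref{wwp} with $\ell=24$, which holds by Lemma \ref{lemma:sup}. Hence Condition \ref{cond2} is satisfied, and Lemma \ref{lemman1} yields the stated weak convergence of $\tilde{B}\de$ in $C([0,T];\mathcal{H}^s\times\mathcal{H}^s)$ to $B=(0,B_2)$.
\end{proof}
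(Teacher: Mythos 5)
Your proof is correct and follows essentially the same route as the paper: apply Lemma \ref{lemman1}, citing Lemma \ref{lemma:d-G} for \eqref{dde-G}, Lemma \ref{Lemmasize} for \eqref{sizeincrem}, Lemma \ref{lemman2} for Condition \ref{cond2}(i), and Lemma \ref{lemma:sup} with $\eta_i=1$ for Condition \ref{cond2}(ii). Your explicit tracking of the exponents ($m=24$, $b_i+1-\eta_i=1/6$, etc.) is a slightly more detailed bookkeeping of the same argument.
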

\begin{proof}[Proof of Lemma  \ref{lemmanoise}]
We use Lemma \ref{lemman1}. Thanks to Lemma \ref{lemma:d-G}, \eqref{dde-G} is satisfied with $a=1$ and $p=6$. From Lemma \ref{Lemmasize}, \eqref{sizeincrem} holds with $r=1/2$.  As for Conditions \ref{cond2}, Lemma \ref{lemman2} proves that 
Condition \ref{cond2} \textbf{(i)} holds. In view of Lemma \ref{lemma:sup}, Condition  \ref{cond2} \textbf{(ii)} is satisfied with $\eta_1=\eta_2=\eta_3=\eta_4=1$.
\end{proof}

\section{Numerics} 
\label{sec:num}
Before describing the numerical results we highlight the fact that
the function space MALA algorithm of \cite{Besk:etal:08}
is a special  case of the function space HMC algorithm of \cite{BPSS11}
which, in turn, is a special case of the \sol algorithm introduced in this
paper. All of these algorithms are designed to have dimension-independent
mixing times, and are indistinguishable from this point of view. However
we expect to see different performance in practice and our numerical
experiments are aimed at demonstrating this. In the paper
\cite{BPSS11} it was shown that HMC is a significant
improvement on MALA for bridge diffusions \cite{BPSS11}.
It is natural to try and show that the \sol algorithm can be more efficient than
HMC. To do this, we choose a target measure $\pi$ defined 
with respect to a reference measure $\pi_0$ which is a standard
Brownian bridge on $[0,100]$, starting and ending at $0$, and with
\[
\Psi(q)= \frac{1}{2}   \int_0^{100}   
 \, V\big(q(\tau)\big) \,d\tau 
\label{eq:psi}
\]
and $V(u)= (u^2-1)^2$.  Thus we may take as ${\mathcal H}$ 
the space $L^2\big(  (0,\,100), \mathbb{R} \big).$
The properties of measures of this type are studied in some detail 
in \cite{otto2013invariant}.  For our purposes it is relevant to note
that $\mathbb{E}^{\pi}\,q(\tau) =0 $ for  $0\le \tau \le 100$. 
This follows from the fact that the function $V$ is even and zero
boundary conditions are imposed by the Brownian bridge meaning that
the measure is invariant under $q \mapsto -q$.
The precision (inverse covariance) operator for unit Brownian bridge
on an interval $[0,T]$ is simply given by the negative of the Laplacian
with homogeneous Dirichlet boundary conditions. Furthermore samples
may be drawn simply by drawing a Brownian motion $B(t)$ and subtracting
$tB(T)/T.$

Because of these properties of $\pi$ we expect that sufficiently
long runs of MCMC methods to sample from $\pi$ should exhibit,
approximately, this zero mean property. We may use the rate
at which this occurs as a way of discriminating between the
different algorithms.
To this end we define a quantity $E(n)$, with $n$ defined  in what
follows as $n=N_d\,N_M$. Here $N_d$ denotes the number
of steps used in the deterministic integration; in MALA,
$N_d=1$ and for our implementation of HMC, $N_d>1$ is 
chosen so that such that $\tau=N_d\, h \approx 1$.
The integer $N_M$ is the number of MCMC steps taken. 
The quantity $n$ is thus a measure of the total number of
numerical integration steps used and thus of the overall work
required by the algorithm, noting that all the algorithms 
involve more or less the same calculations per step, and that
accept/reject overheads are minimal compared with the cost
arising from accumulation of numerical integration steps. 
We define the running average
\[
\bar{q}^{\,n} (\tau) = \frac{1}{N_M} \sum_{i=1}^{N_M}  q^{i} (\tau)
\]
where the index $i$ runs over the realizations $N_M$ of the path $q$.
We then define the quantity $E(n)$ as
\[
E(n)= \frac{1}{100} \int_0^{100} \!\!\!\!  \ \big|\bar{q}^{\,n} (\tau) \big| 
d\tau.
\label{eq:Edef}
\]
When viewed as a function of $n$ the rate at which $E(n)$ approaches zero 
determines  the efficiency of the sampling. The faster $E$ decreases, 
the more efficient the sampling.
All our numerical experiments are conducted with the form of the
\sol algorithm described in Remarks \ref{rem:need}. Thus $\Gamma_2=I$ and
we use the parameter $\iota$ to implicitly define $\delta.$

For the first set of numerical experiments we use the \sol algorithm
in the form which gives rise to the diffusion limit, namely with $\delta=h$
so that we make only  one step in the deterministic integration $N_d=1.$ 
The key parameter is thus $\iota$ (and implicitly $\delta$) given in
\eqref{eq:preserve}.
We consider the values $\iota= \,0.9, \,0.99,$ and $0.999$. 
The results are summarized in Figure \ref{fig:one}.
They demonstrate that SOL-HMC is indeed considerably better than MALA, 
but is not better than HMC for this problem and this choice of
parameters.
For $\iota=0.999$, we see a plateauing of the value for $E$ for $n$ between
$500$ and $1000$.
It seems that such behavior is due to the sign-flip step when the proposal is rejected.
As Horowitz \cite{H91} noted for the standard finite dimensional  algorithm L2MC
that we have generalized,
``If it were not for this momenta reversal, it would be a near certain conclusion that L2MC is more efficient than HMC".
Our findings are consistent with this remark.
\begin{figure*}[th]
\begin{center}
\includegraphics[height=10cm]{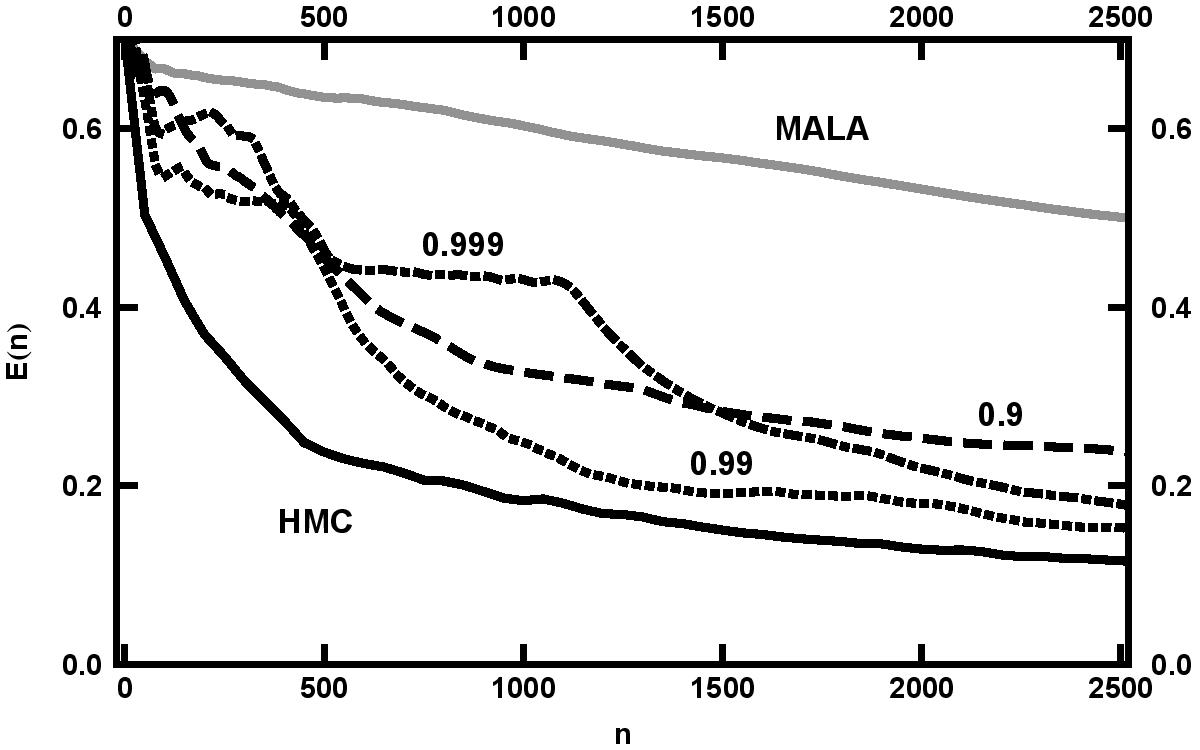}
\end{center}
\caption{The quantity $E$ plotted as a function of number of iterations. The HMC 
algorithm has the best performance; MALA has the worst.  For the values of $\iota$ 
used ($0.9$, $0.99$ and $0.999$) \sol is considerably 
better than MALA, but not better than HMC.}
\label{fig:one}
\end{figure*}

The first set of experiments, in which HMC appeared more efficient than \sol,
employed a value of $\delta$ which corresponds to making small changes in the
momentum in each proposal.
For the second set of numerical experiments we relax this constraint
and take $\iota=2^{-1/2}$ in all our \sol simulations. This corresponds
to taking an equal mixture of the current momentum and an independent
draw from its equilibrium value. 
Furthermore, these experiments use more than one step in the 
deterministic integration, $N_d>1$. 
For the HMC integration, we use $N_d =50$, $\tau=1$  and of course, $\iota=0$.
The results are summarized in Figure \ref{fig:two}
where we show the behaviour of the HMC algorithm in comparison
with four choices of parameters in the \sol algorithm: i) $N_d =10$; ii) $N_d =25$; iii) $N_d=50$; and in iv) 
$N_d$ is a random value, uniformly distributed between $25$ and $75$, in
each step of the MCMC algorithm. 
We see that if $N_d \ge 25$ the \sol algorithm shows improved
behaviour in comparison with  the HMC algorithm. 
\begin{figure*}[th]
\begin{center}
\includegraphics[height=10cm]{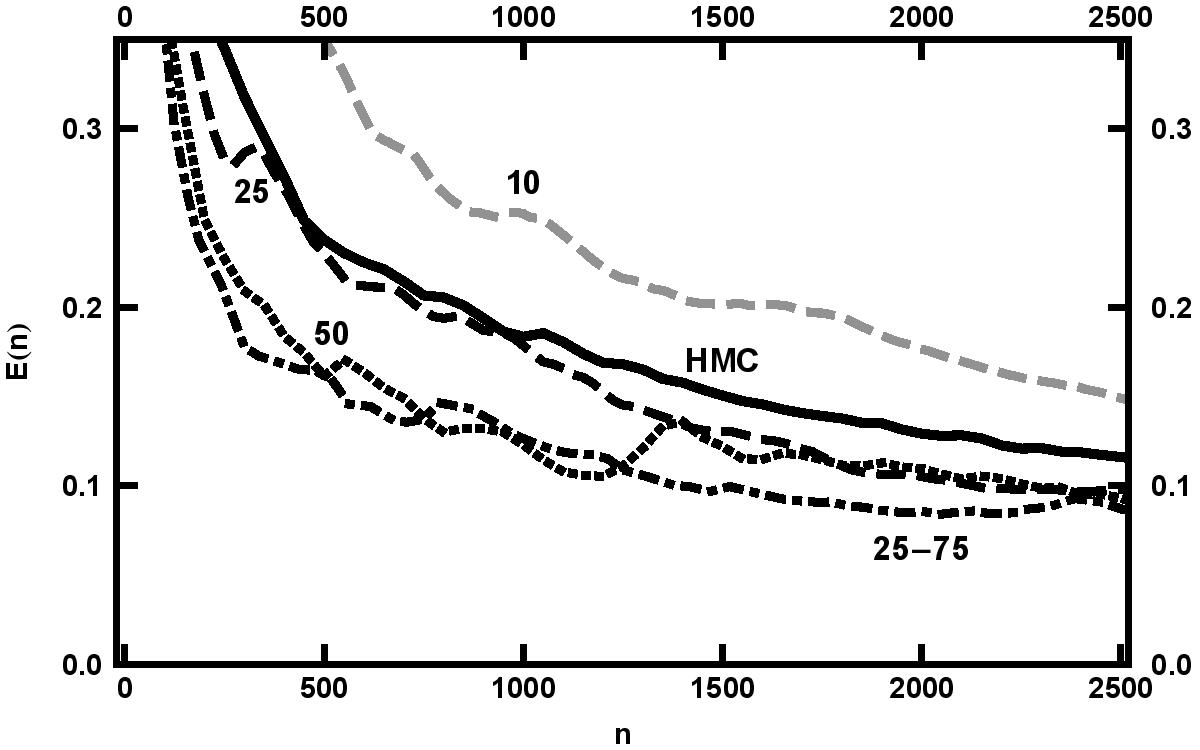}
\end{center}
\caption{Figure showing that \sol can exhibit faster mixing than
HMC when run in the regime
with $\iota= 2^{-1/2}$, for appropriate choice of $\tau$ (reflected in the
integer $N_d$ which labels the graphs -- $25$, $50$, $75$ and $25-75$ for
the random case).
Note that for HMC, by definition, $\iota=0$ and we fix $\tau \approx 1.$ Note 
also that the vertical scale is half that of the previous graph.}
\label{fig:two}
\end{figure*}

\section{Conclusions}
\label{sec:conc}
We have introduced a parametric family of MCMC methods, the
\sol algorithms,
suitable for sampling measures defined via density with respect to
a Gaussian. The parametric family includes a range of existing
methods as special parameter choices including the function space
MALA and HMC methods introduced in \cite{Besk:etal:08,BPSS11}.
Whilst both these algorithms are reversible with respect to the
target, generic parameter choices in the SOL-HMC algorithm lead
to irreversible algorithms which preserve the target.
With one particular parameter choice we show that the algorithm
has a diffusion limit to the second order Langevin equation; this latter
limit makes clear the role of irreversibility in the algorithm.
Numerical results indicate that the method is comparable with
the function space HMC method of \cite{BPSS11} which, in turn,
is superior to the function space MALA method of \cite{Besk:etal:08}. 
Indeed, in the example studied, we are able to exhibit situations for which the
\sol algorithm outperforms the HMC method. 
Further application of the method is thus suggested. 

We make an important observation about the diffusion limits proved
in this paper, Theorem \ref{thm:difflim}, and in \cite{PST13}, both of which concern algorithms
that have been specifically designed to deal with target measures
defined via density with respect to a Gaussian; indeed both methods
would suffer no rejections in the pure Gaussian case. The limit theorems
demonstrate that the number of steps required to sample may be chosen
independently of the dimension of the approximating space $N$. However,
in contrast to the diffusion limits identified in \cite{Matt:Pill:Stu:11,Pill:Stu:Thi:12} the theory does
not reveal an optimal choice for the time-step, or an optimal acceptance
probability. The fact that an optimal acceptance probability, and implicitly
an optimal time-step, can be identified in \cite{Matt:Pill:Stu:11,Pill:Stu:Thi:12} is precisely because
the proposal does not exactly preserve the underlying Gaussian reference 
measure and the universal optimal acceptance probability is determined purely 
by the Gaussian properties of the problem; the change of measure, and hence
function $\Psi$, play no role. Once improved methods are
used, such as SOL-HMC and the pCN method analyzed in \cite{PST13}, which exactly
preserve the Gaussian structure, no such universal behaviour can be
expected and optimality must be understood on a case by case basis.

\section*{Appendix A}
This Appendix contains the proof of Theorem \ref{t:alg}, of the identity  \eqref{mapB} and a sketch of the proof of Proposition \ref{lemma:difflimit}. 

\begin{proof}[Proof of Theorem \ref{t:alg}]
{By following the arguments given in Section 5.3 of
\cite{neal2010mcmc},
it suffices to show $\chi_{\tau}^h$ preserves $\Pi$ when appended
with the suitable accept-reject mechanism.} We show this
  using the fact that the finite dimensional version of the above
  algorithm preserves the corresponding 
 invariant measure. Since the proof of this
 is very similar to that of Theorem \ref{t:flowprev} 
 we only sketch the details.
For $N \in \mathbb{N}$ and $t> 0$ define the map
 \be
\chi^t_N &= \Theta^{t/2}_{N,1} \circ \sR^t_N \circ \Theta^{t/2}_{N,1}
\ee
where $R^t_N$ and $\Theta^t_N$ are obtained by restricting
$R^t$ and $\Theta^t_1$ respectively on the first $N$ components of $(q,v)$
and with $\Psi$ replaced by $\Psi^N$, {as in the proof of 
Theorem \ref{t:flowprev}.
Also following the ideas used in the proof of Theorem \ref{t:flowprev}
it may be shown that}
$\lim_{N \rightarrow \infty} \chi^t_N(q,v) =  \chi^t(q,v)$
for any $t$ and $(q,v) \in \h^s \times \h^s$. Now the proof can be
completed via a dominated convergence argument similar to that
used in the proof of Theorem \ref{t:flowprev}.
\end{proof}
\begin{proof}[Proof of \eqref{mapB}]
We  recall that for any integer $j\geq 0$
\begin{align}\label{sumk}
x^{j+1, \delta}-x^{j,\delta}=\delta G\de(x^{j,\delta})+\sqrt{2\delta\mathcal{S}}M^{j,\delta}.
\end{align}
 With this in mind \eqref{mapB} is a straightforward consequence  of the definition  of $z\de(t)$ and $\bar{z}\de(t)$, given in \eqref{eqn:continter} and 
\eqref{eqn:constinter} respectively; indeed if  $t_k\leq t < t_{k+1}$ then 
\begin{align}
z\de(t)&=\frac{1}{\delta}(t-t_k) x\kpod + \frac{1}{\delta} (t_{k+1}-t) x\kd \nonumber\\
&\stackrel{\eqref{sumk}}{=} \frac{1}{\delta} (t-t_k) \left[ x\kd+G\de(x\kd)\delta +\sqrt{2\delta\mathcal{S}} M\kd  \right]
+\frac{1}{\delta} (t_{k+1}-t)x\kd \nonumber \\
&= \frac{1}{\delta} (t_{k+1}-t_k) x\kd + (t-t_k) G\de(\bar{z}\de(t)) +\sqrt{\frac{2\mathcal{S}}{\delta}} 
(t-t_k)M\kd \nonumber \\
&= x\kd +(t-t_k) G\de(\bar{z}\de(t))+\sqrt{\frac{2\mathcal{S}}{\delta}} (t-t_k)M\kd \nonumber\\
&= x\kd +\int_{t_k}^t G\de(\bar{z}\de(u)) du + \sqrt{\frac{2\mathcal{S}}{\delta}} (t-t_k)M\kd. \label{sumk2}
\end{align}
Equation \eqref{mapB} comes from extending the above equality to the whole interval $[0,t]$. More precisely, taking the sum for 
$j=0, \dots, k-1$ on both sides of \eqref{sumk}, we obtain
\begin{align}\label{sumk3}
x\kd=x_0+\int_0^{t_k}G\de(\bar{z}\de(u))\,du+ \sqrt{2\mathcal{S}\delta}\sum_{j=0}^{k-1}M^{j,\delta}.
\end{align}
Substituting \eqref{sumk3} into \eqref{sumk2} we obtain
\be
z\de(t)&=x^0+\int_0^{t}G\de(\bar{z}\de(u))\,du+ \sqrt{2\mathcal{S}\delta} \sum_{j=0}^{k-1}M^{j,\delta}
+\sqrt{\frac{2\mathcal{S}}{\delta}} (t-t_k)M\kd\\
&= x^0+\int_0^{t}G(z\de(u)) du + \tilde{B}\de(t)+\int_0^t \left[  G\de(\bar{z}\de(u)) - G(z\de(u)) \right] du\\
&= x^0+\int_0^{t}G(z\de(u)) du + \hat{B}\de(t) = \Upsilon(x^0, \hat{B}\de).
\ee
\end{proof}
\begin{proof}[Proof of Proposition \ref{lemma:difflimit}]
Once \eqref{mapB} has been established, this proof is completely analogous to the proof of \cite[Lemma 3.5]{PST13}, which is divided in four steps. The only step that we need to specify here is the third, the rest remains unchanged. Such a step consists in showing that
$$
\lim_{\delta \rightarrow 0} \EE \left[ \int_0^T \nors{G\de(\bar{z}\de(u))-G(z\de(u))} du  \right]=0,
$$
where we recall that  $z\de(t)$ and $\bar{z}\de(t)$ have been defined in  \eqref{eqn:continter} and 
\eqref{eqn:constinter} respectively. To this end notice first that if $t_k\leq u < t_{k+1}$ then
$$
z\de(u)-\bar{z}\de (u)=\frac{1}{\delta}(u - t_k) x\kpod + \frac{1}{\delta}(t_{k+1}-u-\delta)x\kd=
\frac{1}{\delta}(u - t_k) (x\kpod - x\kd). 
$$
Hence $ \nors{z\de(u)-\bar{z}\de(u)} \leq \nors{x\kpod - x\kd}$. By using this inequality together with \eqref{dde-G} and \eqref{sizeincrem},   we have, for $t_k\leq u < t_{k+1}$, 
\be
\EE \nors{G\de(\bar{z}\de(u))-G(z\de(u))}&\leq \EE \nors{G\de(\bar{z}\de(u))-G(\bar{z}\de(u))}+
\EE \nors{G(\bar{z}\de(u))-G(z\de(u))}\\
& \les \delta^a (1+\EE\nors{\bar{z}\de(u)}^p) + \EE \nors{z\de(u)-\bar{z}\de(u)}\\
& \les \delta^a (1+\EE\nors{x\kd}^p) +\delta^r (1+\EE\nors{x\kd}^n)\leq 
\delta^{a\wedge r} (1+\EE\nors{x\kd}^{p\vee n}).
\ee
Therefore, using \eqref{dde}, 
\be
\EE \left[ \int_0^T \nors{G\de(\bar{z}\de(u))-G(z\de(u))}\, du  \right]
&\les  \delta^{1+(a\wedge r)}   \sum_{k\delta <T}\left(1+ \EE \nors{x\kd}^{p\vee n}  \right)\\
&\les \delta^{a\wedge r}+ \delta^{1-\epsilon + (a\wedge r)}\left(\delta^{\epsilon}   
  \sum_{k\delta <T} \EE\nors{x\kd}^{p\vee n}  \right) \rightarrow 0.
\ee
\end{proof}

\bigskip

\section*{Appendix B}
This appendix contains the proof of several technical estimates contained in the paper. In particular, it contains the proof of  Lemma \ref{lemma:alpha-1},  Lemma  \ref{lemma:q-q*}, Lemma \ref{lemman1} and Lemma \ref{lemman2}.
We start with an analysis of the acceptance probability. We recall that from  \cite[page 2212]{BPSS11} we know that
\begin{align}
\Delta H(q, v')&=\Psi(q)-\Psi(q_*) -\frac{\delta}{2}
\left( \langle \nabla \Psi(q), v' \rangle + \langle \nabla \Psi(q_*), v_* \rangle\right) \nonumber\\
&+\frac{\delta^2}{8}\left( \|\cC^{1/2}\nabla\Psi (q_*)\|^2
 - \|\cC^{1/2}\nabla\Psi (q)\|^2 \right)  \nonumber\\
& = \mathcal{F}_{\Psi} +\frac{\delta^2}{8}\left( \|\cC^{1/2}\nabla\Psi (q_*)\|^2
 - \|\cC^{1/2}\nabla\Psi (q)\|^2 \right),\nonumber
\end{align}
having set 
$$
\mathcal{F}_{\Psi}:= \mathcal{F}_{\Psi}(x,\xi\de)=
\Psi(q)-\Psi(q_*) -\frac{\delta}{2}
\left( \langle \nabla \Psi(q), v' \rangle + \langle \nabla \Psi(q_*), v_* \rangle\right).
$$

\begin{proof}[Proof of Lemma \ref{lemma:alpha-1}]
Let 
$$
\tilde{\alpha}\de(x,\xi\de)=\tilde{\alpha}\de:= 1\wedge \exp (\mathcal{F}_{\Psi}) \quad \mbox{and} \quad      
\bar{\alpha}\de(x,\xi\de)=\bar{\alpha}\de:=1+\mathcal{F}_{\Psi}{\bf 1}_{\{\mathcal{F}_{\Psi}\leq 0\}}. 
$$
Introducing the functions $h,\bar{h}:\bbR\rightarrow \bbR$ defined as
$$
h(y)=1\wedge e^y \quad \mbox{and} \quad \bar{h}(y)= 1+y {\bf 1}_{\{y\leq 0\}}
$$
we have
\begin{align}
\alpha\de &= h\left(\mathcal{F}_{\Psi}+ \frac{\delta^2}{8}\left( \|\cC^{1/2}\nabla\Psi (q_*)\|^2
 - \|\cC^{1/2}\nabla\Psi (q)\|^2 \right)\right),\label{adh}\\
\tilde{\alpha}\de &= h\left(\mathcal{F}_{\Psi}\right)  \quad \mbox{and} \quad
\bar{\alpha}\de = \bar{h}\left( \mathcal{F}_{\Psi}\right).\label{atbdh}
\end{align}
Clearly,
$$
\EE_x\lv 1-\alpha\de \rv^p \les \EE_x\lv \alpha\de - \tilde{\alpha}\de \rv^p + \EE_x\lv \tilde{\alpha}\de- \bar{\alpha}\de \rv^p
+ \EE_x\lv \bar{\alpha}\de - 1 \rv^p. 
$$
We will show that
\begin{align}\label{alpha1bit}
 \EE_x\lv \alpha\de - \tilde{\alpha}\de \rv^p \les \delta^{3p} (1+\nor{q}^{2p}+ \nor{v}^{2p})
\end{align}
\begin{align}\label{alpha2bit}
\EE_x\lv \tilde{\alpha}\de- \bar{\alpha}\de \rv^p\les \EE_x\lv \mathcal{F}_{\Psi}\rv^{2p}
\end{align}
\begin{align}\label{alpha3bit}
\EE_x\lv \bar{\alpha}\de - 1 \rv^p \les \EE_x\lv \mathcal{F}_{\Psi}\rv^{p}
\end{align}
The above three bounds, together with 
\begin{align}\label{boundF}
\EE_x\lv \mathcal{F}_{\Psi}\rv^{p}\les \delta^{2p} (1+\nor{q}^{2p}+ \nor{v}^{2p}),  
\end{align}
imply \eqref{alpha-1}. Let us start with \eqref{alpha1bit}: from \eqref{adh}  and \eqref{atbdh} and using the Lipshitzianity of the function  $h$ we have
\begin{align*}
 \EE_x\lv \alpha\de - \tilde{\alpha}\de \rv^p &= \EE_x \lv  h\left(\mathcal{F}_{\Psi}+ \frac{\delta^2}{8}\left(
 \|\cC^{1/2}\nabla\Psi (q_*)\|^2 - \|\cC^{1/2}\nabla\Psi (q)\|^2 \right)\right)- h(\mathcal{F}_{\Psi})\rv^p\\
&\les \EE_x \lv \delta^2\left( \|\cC^{1/2}\nabla\Psi (q_*)\|^2 - \|\cC^{1/2}\nabla\Psi (q)\|^2\right) \rv^p\\
& \les \delta^{2p}\EE_x \left[\nor{ q_* - q}^{2p}+ \nor{q}^p \nor{ q_* - q}^{p} \right] \les \delta^{3p}
\left( 1+\nor{q}^{2p}+ \nor{v}^{2p} \right),
\end{align*}
having used  the elementary inequality $\|a\|^2- \|b\|^2 \leq \|a-b\|^2+ 2 \|b\| \,\|a-b\|$ ,   \eqref{lemma6.1v}  and  
\eqref{lemma6.2iii}. 
\eqref{alpha2bit}  follows  after observing that $\lv h(y) - \bar{h}(y)\rv \leq \frac{y^2}{2}$. \eqref{alpha3bit} is a consequence of 
$\lv \bar{\alpha}\de-1\rv= \lv \bar{h}(\mathcal{F}_{\Psi})-\bar{h}(0) \rv$, together with the Lipshitzianity of $\bar{h}$. We are left with showing
\eqref{boundF}. To this end notice first that from \eqref{eqn:q*v*}
\begin{align}\label{use22}
\EE_x \nor{q_*-q -\delta v'}^p & =\EE_x \nor{(\cos\delta-1) q + (\sin \delta - \delta) v' - \frac{\delta}{2}\sin \delta \cC \nabla
\Psi(q)}^p \nonumber\\
& \les \delta^{2p}\left( 1+\nor{q}^{p}+ \nor{v}^{p}\right),
\end{align}
having used \eqref{lemma6.2ii} and \eqref{lemma6.1i}.
Analogously, from the definition of $v_*$ \eqref{eqn:q*v*}, 
\begin{align}\label{use2}
\EE_x \nor{v_*- v'}^p\les  \delta^{p}\left( 1+\nor{q}^{p}+ \nor{v}^{p}\right).
\end{align}
Therefore
\begin{align*}
\EE_x\lv \mathcal{F}_{\Psi}\rv^{p}&= \EE_x \lv \Psi(q)-\Psi(q_*) -\frac{\delta}{2}
\left( \langle \nabla \Psi(q), v' \rangle + \langle \nabla \Psi(q_*), v_* \rangle\right) \rv^p\\
& \leq \EE_x \lv \Psi(q_*)-\Psi(q)-\langle \nabla \Psi(q), (q_*-q) \rangle\rv^p
 + \EE_x \lv \langle \nabla \Psi(q), q_*-q -\delta v' \rangle  \rv^p\\
&+ \EE_x \lv\frac{\delta}{2} \langle \nabla \Psi(q),v'-v_*\rangle \rv^p
+ \EE_x \lv\frac{\delta}{2} \langle \nabla \Psi(q)- \nabla \Psi(q_*), v_*\rangle\rv^p\\
&  \les \EE_x \nor{q_* - q}^{2p}+ \|\nabla \Psi(q)\|_{-s}^p  \EE_x \nor{q_* - q-\delta v'}^{p}\\
& + \delta^p  \|\nabla \Psi(q)\|_{-s}^p \EE_x \nor{v_* -  v'}^{p}+ \delta^p \EE_x 
\left( \nor{q_* - q}^{p}\, \nor{v_* }^{p} \right)\les \delta^{2p}  \left( 1+\nor{q}^{2p}+ \nor{v}^{2p}\right),
\end{align*}
where in the second inequality we have used  \eqref{e.taylor.order2} and \eqref{lemma6.1iii}, in the third \eqref{der-s} together with \eqref{use22}, \eqref{use2},    Lemma \ref{psiq-psiq*} and Lemma \ref{lemma:q-q*}. 
This concludes the proof.
\end{proof}
Before starting the proof of Lemma \ref{lemma:q-q*}, recall the notation \eqref{v'}-\eqref{eqn:q*v*}. 
\begin{proof} [Proof of Lemma \ref{lemma:q-q*}]
$\xi\de$ is an $\cH^s$ valued Gaussian random variable with mean zero and covariance operator $\cC_s(I-e^{-2\delta\Gamma_2})$. Indeed $\Gamma_2$ is a bounded operator from $\cH^s$ into itself and  $B_2$ is an $\cH^s$ valued $\cC_s$-Brownian motion hence $\cC_s(I-e^{-2\delta\Gamma_2})$ is the product of a trace class operator times a bounded operator and therefore it is a trace class operator itself. So
$$
\EE\nor{\xi\de}^p\lesssim \left(\EE\nor{\xi\de}^2 \right)^{p/2}= \left[\tr(\cC_s (I-e^{-2\delta\Gamma_2}))\right]^{p/2}
\leq \left( \tr(\cC_s) \|I-e^{-2\delta\Gamma_2}\|_{\mathcal{L}(\h^s,\h^s)}\right)^{p/2}\lesssim \delta^{p/2}.  
$$
 This proves \eqref{lemma6.2i}. 
\eqref{lemma6.2ii} is a simple consequence of \eqref{lemma6.2i} together with \eqref{v'}. \eqref{lemma6.2iii} follows from  \eqref{eqn:q*v*}, using  \eqref{lemma6.1i} and \eqref{lemma6.2ii}:
\begin{align*}
\EE [\|q_*\kpod-q\kd\|_s^p\vert x\kd=x] &\les
\EE_x\nor{ q (\cos\delta -1)}^p\\
&+\EE_x \nor{ \sin\delta v'}^p+ \EE_x \nor{
\delta\sin\delta \,\cC \nabla \Psi(q)}^p\\
& \lesssim \delta^{2p}\nor{q}^p
+\delta^{p}\nor{v'}^p\lesssim\delta^{p}  (1+ \nor{q}^p+\nor{v}^p).
\end{align*}
\end{proof}

We now turn to prove Lemma  \ref{lemman1}. To this end we follow \cite[Proof of Lemma 3.7]{PST13}, highlighting the slight modifications needed in our context.
\begin{proof}[Proof of Lemma \ref{lemman1}]
In \cite[Theorem 5.1]{Berg:86} it is shown that proving the weak convergence of $\tilde{B}\de$ to $B$ boils down to showing that the following three limits hold in probability:
\begin{align}
&  \label{limits1}   \lim_{\delta\rightarrow 0}\delta 
\sum_{k\delta < t} \EE \left[ \nors{M\kd}^2 \vert \mathcal{F}\kd \right]  = t\,
 \mbox{Trace}_{\mathcal{H}^s\times \mathcal{H}^s}(\mathfrak{C}_s);\\
&   \lim_{\delta\rightarrow 0}\delta 
\sum_{k\delta < t} \EE \left[ \langle M\kd, \hat{\varphi}_j^{\ell} \rangle_{s\times s} \langle M\kd, \hat{\varphi}_i^{\bar{\ell}} \rangle_{s\times s} \vert \mathcal{F}\kd
\right]   =  t\, \langle \hat{\varphi}_j^{\ell}, \mathfrak{C}_s \hat{\varphi}_i^{\bar{\ell}} \rangle_{s\times s},
\quad \forall i,j \in \mathbb{N}, \, \ell,\bar{\ell}  \in  \{ 1,2 \};  \label{limits2} \\
&\lim_{\delta\rightarrow 0}\delta
\sum_{k\delta < t} \EE \left[  \nors{M\kd}^2  \mathbf{1}_{ \{ \nor{M\kd}^2 \geq \delta^{-1} \zeta \}} \vert \mathcal{F}\kd  \right]=0.
\qquad \forall \zeta >0, \label{limits3}
\end{align}
Here $M\kd$ and  $D\de(x)$ have been defined  in \eqref{eqn:Mdelta} and  \eqref{crop}, respectively, and $\mathcal{F}\kd$ is the filtration generated by $\{ x^{j,\delta}, \gamma^{j,\delta}, \xi\de,j=0, \dots, k \}$. 
\begin{description}
\item[Limit \eqref{limits1}] condition \eqref{lemmanoise2gen} implies that 
$$
\EE \left[ \nors{M\kd}^2 \vert \mathcal{F}\kd \right]=\tr_{\mathcal{H}^s\times \mathcal{H}^s} \mathfrak{C}_s+{\bf e}_1(x\kd)
$$
where $\lv {\bf e}_1(x\kd) \rv\les \delta^{b_2}(1+\nors{x\kd}^{d_2})$. Therefore
$$
\delta \sum_{k\delta< t}\EE \left[ \nors{M\kd}^2 \vert \mathcal{F}\kd \right]=t \tr_{\mathcal{H}^s\times \mathcal{H}^s} \mathfrak{C}_s+
\delta  \sum_{k\delta< t}\EE   [{\bf e}_1(x\kd)] .
$$
Thanks to \eqref{dde22}, we have
\be
\delta  \sum_{k\delta< t}\EE   \lv {\bf e}_1(x\kd) \rv & \les \delta^{b_2+1} 
\sum_{k\delta< t}(1+\EE\nors{x\kd}^{d_1})\\
& \leq \delta^{b_2} +\delta^{b_2+1-\eta_2} \left( \delta^{\eta_2} \sum_{k\delta< t}\EE\nors{x\kd}^{d_2} \right)
\stackrel{\delta\rightarrow 0}\longrightarrow 0.
\ee
\item[Limit \eqref{limits2}] can be proved as a consequence of \eqref{lemmanoise1gen} and \eqref{dde22}, acting as we did to show \eqref{limits1}.

\item[Limit \eqref{limits3}] the Cauchy-Schwartz and Markov inequalities give
\be
\EE \left[  \nors{M\kd}^2  \mathbf{1}_{ \{ \nor{M\kd}^2 \geq \delta^{-1} \zeta \}} \vert \mathcal{F}\kd  \right] & \leq \left(
 \EE[\nors{M\kd}^4 \vert \mathcal{F}\kd]   \right)^{1/2}   \left(
 \mathbb{P}[\nors{M\kd}^2 > \delta^{-1}\zeta] \right)^{1/2}\\
& \leq \frac{\delta}{\zeta} \EE[\nors{M\kd}^4 \vert \mathcal{F}\kd],
\ee
hence we need to estimate $ \EE[\nors{M\kd}^4 \vert \mathcal{F}\kd] $. To this end, we use \eqref{nonldrift} (which, we recall, is a consequence of 
\eqref{dde-G}) and \eqref{sizeincrem}:
\be
\EE[\nors{M\kd}^4 \vert \mathcal{F}\kd] & =\frac{1}{\delta^2} \EE[\nors{x\kpod-x\kd -\delta G\de(x\kd )}^4\vert  \mathcal{F}\kd]\\
& \les \frac{1}{\delta^2}\EE [\nors{x\kpod- x\kd}^4\vert \mathcal{F}\kd] + \delta^2 \EE
\nor{G\de(x\kd )}^4\\
& \les \delta^{4r-2}\EE(1+\nors{x\kd}^{4n})+\delta^2 \EE(1+\nors{x\kd }^{4p}).
\ee
Therefore
\be
&\delta \sum_{k\delta <t}\EE \left[  \nors{M\kd}^2  \mathbf{1}_{ \{ \nor{M\kd}^2 \geq \delta^{-1} \zeta \}} \vert \mathcal{F}\kd  \right] 
\les \delta^2 \sum_{k\delta<t}\EE \left[  \nors{M\kd}^4 \vert\mathcal{F}\kd \right] \\
&\les \delta^{4r} \sum_{k\delta <t}\EE(1+\nors{x\kd}^{4n}) +\delta^4 \sum_{k\delta <t}\EE(1+\nors{x\kd }^{4p})\\
&\les \delta^{4r-1}+\delta^{4r-\eta_3}\left( \delta^{\eta_3} \sum_{k\delta <t}\EE  \nors{x\kd}^{4n}\right)
+\delta^3+\delta^{4-\eta_4}\left( \delta^{\eta_4} \sum_{k\delta <t}\EE\nors{x\kd }^{4p}\right)\rightarrow 0.
\ee
\end{description}
\end{proof}

\begin{proof}[Proof of Lemma \ref{lemman2}] We will show the following two bounds:
  \begin{align}
& \lv \langle \hat{\varphi}_j^{\ell}, D\de(x)\, \hat{\varphi}_i^{\bar{\ell}} \rangle_{s\times s} - 
 \langle \hat{\varphi}_j^{\ell}, \mathfrak{C}_s\, \hat{\varphi}_i^{\bar{\ell}} \rangle_{s\times s} \rv \les \delta^{1/6} (1+ \nors{x}^{10})
\qquad \forall i,j \in \mathbb{N} \,\,\mbox{ and } \,\, \ell,\bar{\ell}\in {1,2};
 \label{lemmanoise1} \\
&  \lv       \textup{Trace}_{\mathcal{H}^s\times \mathcal{H}^s}(D\de (x))- 
 \textup{Trace}_{\mathcal{H}^s\times \mathcal{H}^s}(\mathfrak{C}_s)      \rv \les \delta^{1/6} (1+ \nors{x}^{10}) .                                       \label{lemmanoise2} 
\end{align}

Denote $\gamma_2:=\|\Gamma_2\|_{\mathcal{L}(\mathcal{H}^s)}$,  $M_1\kd:=\mathcal{P}_q(M\kd)$, 
$M_2\kd:=\mathcal{P}_v(M\kd)$ and recall from \eqref{mdeltax} that $M\de(x)=[M\kd\vert x\kd=x]$. Then, from 
 \eqref{eqn:Mdelta}, 
\begin{align}
M_1\de(x) &=\frac{1}{\sqrt{2\delta}}\left[q^{1,\delta}-q-\EE[q^{1,\delta}-q]\right]\label{decomp1},\\
M_2\de(x) &=\frac{1}{\sqrt{2\delta\Gamma_2}}\left[ v^{1,\delta}-v-\EE[v^{1,\delta}-v] \right] \label{decomp2}.
\end{align}
In order to obtain \eqref{lemmanoise1} and \eqref{lemmanoise2}, we start with studying
$M_1\de(x)$ and $M_2\de(x)$. More precisely, we proceed as follows:
\begin{itemize}
\item We first show the bound
\begin{align}\label{nec1}
\EE\left[  \nor{M\kd_1}^2 \vert x\kd = x \right]=\EE\nor{M\de_1(x)}^2 \lesssim \delta (1+\nors{x}^2)
\end{align}
and the decomposition
\begin{align}\label{M2Rxi}
M\de_2(x)=\frac{1}{\sqrt{2\delta\Gamma_2}}\left[ R\de(x)+\xi\de\right], 
\end{align}
where $R\de(x)$, defined in \eqref{Rdelta}, is such that
\begin{align}\label{p1}
\frac{1}{\delta}\,\EE \nor{R\de(x)}^2 \,\les \delta^{1/3}(1+\nors{x}^{10}).
\end{align}
\item We then prove that \eqref{lemmanoise1} and \eqref{lemmanoise2} are a consequence of \eqref{nec1} and \eqref{M2Rxi}-\eqref{p1},
together with 
\be 
\lv\, \frac{1}{2\delta\gamma_2}\EE\nor{\xi\de}^2 - \textup{Trace}_{\mathcal{H}^s} \cC_s \, \rv\les \delta^2, 
\ee
which is easily seen to hold true. Indeed by definition
\begin{align}
\lv\, \frac{1}{2\delta\gamma_2}\EE\nor{\xi\de}^2 - \textup{Trace}_{\mathcal{H}^s} \cC_s \, \rv
&= \lv\, \textup{Trace}_{\mathcal{H}^s} \left[\frac{\cC_s - \cC_s e^{-2\delta\Gamma_2}}{2\delta\gamma_2} - \cC_s \right]\rv \nonumber \\
& \leq \textup{Trace}_{\mathcal{H}^s}(\cC_s)\lv
\frac{1-e^{-2\delta\gamma_2}}{2\delta\gamma_2}-1 \rv\les \delta^2.\label{blublu}
\end{align}
\end{itemize}
 \eqref{nec1} is a straightforward  consequence of \eqref{decomp1}, using \eqref{blabla}, \eqref{fact1} and
\eqref{lemma6.2iii}:
\be
\EE\left[  \nor{M\kd_1}^2 \vert x\kd = x \right] \les \frac{1}{\delta} \EE\nor{q_* -q}^2 \les \delta (1+\nor{x}^2).
\ee 
Recalling that  $\gamma\de\sim \textup{Bernoulli}(\alpha\de)$ (with $\alpha\de$ defined by equation 
\eqref{alphade}), to decompose $M_2\de(x)$ we start from \eqref{decomp2} and use \eqref{blabla}:
\begin{align*}
M_2\de(x)=\frac{1}{\sqrt{2\delta \Gamma_2}}\left[  \gamma\de(v_*+v')-\EE(\gamma\de(v_*+v'))
-v'-v+\EE(v'+v) \right].
\end{align*}
By \eqref{v'} and \eqref{fact2}, 
\begin{align}\label{star}
-v'-v+\EE(v'+v)= -v'+\EE v'  =-\xi\de;
\end{align}
  so \eqref{eqn:q*v*} yields:
\begin{align}
\sqrt{2\delta \Gamma_2}M_2\de (x)&= (\gamma\de-\EE(\gamma\de)) \left[-q\sin\delta -\frac{\delta}{2}\cos\delta\,\, \cC\nabla\Psi(q)\right] -\xi\de \nonumber \\
&+ \gamma\de\left[ (\cos\delta+1)v'-\frac{\delta}{2}\cC \nabla\Psi(q_*)\right]
-\EE\left[  \gamma\de\left( (\cos\delta+1)v'-\frac{\delta}{2}\cC \nabla\Psi(q_*)\right)  \right] \nonumber.
\end{align}
 Let $f(x)=f\de(x)+\bar{f}(x)$, with
\be
 f\de(x):=-\frac{\delta}{2}\cC \nabla\Psi(q_*) \quad \mbox{and} \quad \bar{f}(x):=(\cos\delta +1 )v'.
\ee
Then
\begin{align*}
 \sqrt{2\delta \Gamma_2}M_2\de(x) &=(\gamma\de-\EE(\gamma\de)) (-q\sin\delta -\frac{\delta}{2}\cos\delta\, \cC\nabla\Psi(q))\\
&+\gamma\de f-\EE(\gamma\de f)-\xi\de\\
&=(\gamma\de-\EE(\gamma\de)) (-q\sin\delta -\frac{\delta}{2}\cos\delta \,\cC\nabla\Psi(q))\\
&+\gamma\de f-\EE(\gamma\de) f + \left[\EE(\gamma\de) -1\right] f-
\EE\left[(\gamma\de-1) f\right]+f-\EE f-\xi\de.
\end{align*}
However $\bar{f}-\EE \bar{f}=(\cos\delta+1)(v'-\EE v')$, so using \eqref{star}
$$
f-\EE f-\xi\de= \bar{f}-\EE \bar{f}+f\de- \EE f\de-\xi\de=(\cos\delta)\xi\de+f\de- \EE f\de;
$$
therefore, setting
\begin{align}
& R\de_1(x):=(\EE(\gamma\de)-\gamma\de) (q\sin\delta  +\frac{\delta}{2}\cos\delta \,\cC\nabla\Psi(q))  \nonumber \\
& R\de_2(x):=(\gamma\de-\EE\gamma\de) f= (\gamma\de-\EE\gamma\de)  [(\cos\delta+1)v' - \frac{\delta}{2}
 \cC\nabla\Psi(q_*) ]   \nonumber \\
& R_3\de(x):=- \EE[(\gamma\de -1)f]+[(\EE\gamma\de)-1]f     \nonumber  \\
& R_4\de(x):=f\de-\EE(f\de)+(\cos\delta -1)\xi\de \qquad \mbox{and}   \nonumber  \\
&R\de(x):=\sum_{i=1}^4R_i\de \label{Rdelta}, 
\end{align}
we obtain \eqref{M2Rxi}.
From now on, to streamline the notation, we will not keep track of the $x$-dependence in $R_i\de(x)$ and $R\de(x)$. In other words, we will simply denote $R_i\de:=R_i\de(x)$ and $R\de:=R\de(x)$.

To prove \eqref{p1} we bound  $\EE\nor{R_i\de}^2$,   $i=1,\dots,4$. Observe first that
$$
\EE(\gamma\de -\EE \gamma\de)^2= \EE(\alpha\de) (1-\EE \alpha\de)\les \delta^2 (1+\nors{x}^4), 
$$
which is a consequence of  \eqref{alpha-1} and \eqref{fact1}. Therefore, by \eqref{lemma6.1i},
\begin{equation}\label{estR1}
\EE \nor{R\de_1}^2\les \EE(\gamma\de -\EE \gamma\de)^2 \nor{\delta q+\delta \cC\nabla\Psi(q)}^2\les \delta^3(1+\nors{x}^6).
\end{equation}
Now notice that the Bochner's inequality together with Jensen's inequality give
$$
\nor{\EE g}^2\leq\EE \nor{g}^2, \qquad \mbox{for every $\mathcal{H}^s$-valued,  integrable } g.
$$ 
To bound $R_2\de$ we split it  into two terms, namely
$$
R_2\de:=(\gamma\de -\EE \gamma\de) (\cos\delta +1) v' - (\gamma\de -\EE \gamma\de) \frac{\delta}{2}\cC \nabla\psi(q_*)=:
R_{21}\de+ R_{22}\de.
$$
To estimate $R\de_{22}$ we use \eqref{fact1},  \eqref{lemma6.1i} and  \eqref{lemma6.2iii}:
$$
\EE \nor{R\de_{22}}^2 \les \delta^2 \left[\EE \nor{\cC \nabla\Psi (q_*)-\cC \nabla\Psi (q)  }^2 + \nor{\cC \nabla\Psi (q)}^2\right]
\les \delta^2 (1+\nor{q}^2).
$$
To study $R_{21}\de$ instead, we write $\gamma\de-\EE\gamma\de=\gamma\de-1+\EE(1-\gamma\de)$ and we repeatedly use 
\eqref{ops}, obtaining:
\be
\EE\nor{R_{21}\de}^2  &\les \EE\left[ (\gamma\de -\EE \gamma\de)^2 \nor{v'}^2 \right]\\
& \les   \EE \left[ (\gamma\de -1)^2  \nor{v'}^2 \right]+ \EE(1-\gamma\de)^2 \EE\nor{v'}^2\\
& \stackrel{\eqref{lemma6.2ii}}{\les} \left( \EE\lv\gamma\de -1\rv^3  \right)^{2/3} \left( \EE \nor{v'}^6\right)^{1/3}+
\EE\lv  1-\alpha\de\rv (1+\nor{v}^2)\\
&\stackrel{\eqref{alpha-1}}{\les} \delta^{4/3}(1+\nors{x}^8) +\delta^2 (1+\nors{x}^6)\les \delta^{4/3}(1+\nors{x}^8) .
\ee
Combining the estimates of $R\de_{21}$ and $R\de_{22}$ we get
\begin{equation}\label{estR2}
\EE \nor{R\de_2}^2\les \delta^{4/3}(1+\nors{x}^8).
\end{equation}
As for $R_3\de$, using $\EE\nor{f}^2\les 1+\nors{x}^2$  (which is a consequence of \eqref{lemma6.1i} and 
\eqref{lemma6.2ii}),
\be
\EE\nor{R\de_3}^2&\leq \EE \nor{ (\EE(\gamma\de)-1) f }^2+\nor{ \EE[(\gamma\de-1)f] }^2\\
& \leq (\EE(\alpha\de)-1)^2 \EE\nor{f}^2 + \EE\nor{(\gamma\de-1)f }^2\\
&\stackrel{\eqref{alpha-1}}{\les} \delta^4 (1+\nors{x}^8)(1+\nors{x}^2) +  \left( \EE\lv\gamma\de-1\rv^3  \right)^{2/3}
 \left(  \EE \nor{f}^6  \right)^{1/3}\\
& \stackrel{\eqref{ops}}{\les} \delta^4 (1+\nors{x}^{10}) +\delta^{4/3} (1+\nors{x}^4)^{2/3} (1+\nors{x}^2) \leq  
\delta^{4/3} (1+\nors{x}^{10}).\label{estR3}
\ee
Now the last term: from \eqref{lemma6.1i},  $\EE\nor{f\de}^2 \les \delta^2(1+\nor{q}^2)$; therefore
\begin{align}\label{estR4}
\EE\nor{R_4\de}^2 \les \EE\nor{f\de}^2 +\delta^4 \EE\nor{\xi\de}^2\les \delta^2(1+\nor{q}^2).
\end{align}
It is now clear that \eqref{p1} follows from \eqref{estR1}, \eqref{estR2}, \eqref{estR3} and \eqref{estR4}.

 Let us now show that \eqref{lemmanoise1} and \eqref{lemmanoise2} follow from \eqref{nec1} and \eqref{M2Rxi}-\eqref{p1}.  We   start with  \eqref{lemmanoise2}.  
 By definition, 
\begin{align}\label{tr1}
\textup{Trace}_{\mathcal{H}^s\times \mathcal{H}^s} (D\de(x))&= \EE\left[  \nor{M\kd}^2 \vert x\kd = x \right]\nonumber\\
&= \EE\left[  \nor{M\kd_1}^2 \vert x\kd = x \right]+ \EE\left[  \nor{M\kd_2}^2 \vert x\kd = x \right]
\end{align}
and 
$$
\textup{Trace}_{\mathcal{H}^s\times \mathcal{H}^s}(\mathfrak{C}_s)=
\textup{Trace}_{\mathcal{H}^s}(\cC_s).
$$
Also, 
\begin{align}
 & \lv \EE\left[  \nor{M\kd_2}^2 \vert x\kd = x \right]  - \textup{Trace}_{\mathcal{H}^s}(\cC_s) \rv \stackrel{\eqref{M2Rxi}}{=}
\lv \frac{1}{2\delta \gamma_2} \EE\nor{R\de+\xi\de}^2-\textup{Trace}_{\mathcal{H}^s}(\cC_s)\rv\nonumber\\
&\leq \frac{1}{2\delta \gamma_2} \EE\nor{R\de}^2+ 
\lv  \frac{1}{2\delta \gamma_2}\EE\nor{\xi\de}^2  -\textup{Trace}_{\mathcal{H}^s}(\cC_s)\rv
+\frac{1}{\delta \gamma_2}\EE \langle R\de,\xi\de\rangle_s\nonumber\\
& \les  \frac{1}{2\delta \gamma_2} \EE\nor{R\de}^2+ 
\lv  \frac{1}{2\delta \gamma_2}\EE\nor{\xi\de}^2  -\textup{Trace}_{\mathcal{H}^s}(\cC_s)\rv
+\frac{1}{\delta}\left(\EE\nor{R\de}^2  \right)^{1/2} \left(\nor{\xi\de}^2  \right)^{1/2}\nonumber\\
&\stackrel{\eqref{blublu}}{\les} \delta^{1/3}(1+\nors{x}^{10})+\delta^2+\delta^{1/6}(1+\nors{x}^5)\les 
\delta^{1/6}(1+\nors{x}^{10}).\label{tr2}
\end{align}
\eqref{tr1} and the above \eqref{tr2} imply \eqref{lemmanoise2}. \eqref{lemmanoise1} can be obtained similarly.
Due to the symmetry of $D\de(x)$, all we need to show is that 
 if at least one index between $\ell$ and
$\bar{\ell}$ is equal to 1 then
\begin{align}\label{d1}
\langle \hat{\varphi}_i^{\ell}, D\de(x) \hat{\varphi}_j^{\bar{\ell}} \rangle_{s\times s} \les  \delta^{1/2}(1+\nors{x}^5).
\end{align}
If instead $\ell=\bar{\ell}=2$  we will prove that
\begin{align}\label{d2}
\lv \langle \phi_i, D\de_{22}(x) \varphi_j\rangle_s - \langle \varphi_i, \cC_s \varphi_j \rangle_s \rv\leq \delta^{1/6}
(1+\nors{x}^{10}),
\end{align}
where $D\de_{22}(x)=\EE [M\kd_2 \otimes M\kd_2 \vert x\kd=x]$. \eqref{d1} and \eqref{d2} imply \eqref{lemmanoise1}. 
To prove the bound \eqref{d1}  observe first that 
$$
\EE \nor{M_2\de(x)}^2\les 1+\nors{x}^{10},
$$
which follows from \eqref{M2Rxi}, \eqref{p1} and \eqref{lemma6.2i}.
To show \eqref{d1} suppose, without  loss of generality, that $\ell=1,\bar{\ell}=2$. Then
\be
\lv \langle \hat{\varphi}_i^{1}, D\de(x) \hat{\varphi}_j^{2}\rangle_{s\times s}\rv\leq 
\EE \lv\langle M\de(x), \hat{\varphi}_i^{1}\rangle_{s\times s} \langle M\de(x), \hat{\varphi}_i^{2}\rangle_{s\times s}  \rv\\
\leq \left(\EE \nor{M_1\de(x)}^2 \right)^{1/2} \left(\EE \nor{M_2\de(x)}^2 \right)^{1/2}\stackrel{\eqref{nec1}}{\les} \delta^{1/2}(1+\nors{x}^5).
\ee
As for \eqref{d2}, let $\xi:=\xi^0$, i.e. let $\xi$ be a mean zero Gaussian random variable with covariance operator 
$\cC_s$ in $\mathcal{H}^s$. Then 
\begin{align*}
&\lv \langle \hat{\varphi}_i^2, D\de(x) \hat{\varphi}_j^2\rangle_{s\times s}
-\langle   \hat{\varphi}_i^2, \mathfrak{C}_s \hat{\varphi}_j^2 \rangle_{s\times s}\rv
=  \lv \langle\varphi_i, D\de_{22}(x)\varphi_j \rangle_s - \langle\varphi_i,\cC_s \varphi_j \rangle_s\rv\\
&=\lv \EE \left(  \langle M_2\de(x),\varphi_i\rangle_s \langle M_2\de(x),\varphi_j\rangle_s \right) 
-\EE \left( \langle \xi,\varphi_i\rangle_s \langle \xi,\varphi_j\rangle_s \right)\rv\\
&\stackrel{\eqref{M2Rxi}}{=} 
\lv  \EE \langle \frac{R\de+\xi\de}{\sqrt{2\delta \Gamma_2}} , \varphi_i\rangle_s
 \langle \frac{R\de+\xi\de}{\sqrt{2\delta \Gamma_2}} , \varphi_j \rangle_s-
\EE \left( \langle \xi,\varphi_i\rangle_s \langle \xi,\varphi_j\rangle_s \right) \rv\\
&\les \frac{1}{\delta}\EE\nor{R\de}^2 + \frac{1}{\delta} \left(\EE\nor{R\de}^2 \right)^{1/2}
\left(\EE\nor{\xi\de}^2 \right)^{1/2}+ 
\lv  \frac{1}{2\delta \gamma_2}\EE (\langle  \xi\de,\varphi_i \rangle_s)^2-\EE (\langle \xi, \varphi_i \rangle_s) \rv;
\end{align*}
so, by using again \eqref{p1} and \eqref{lemma6.2i} and with a reasoning analogous to that contained in 
\eqref{blublu} and \eqref{tr2}, we obtain \eqref{d2}.
\end{proof}

\section*{Appendix C}
We gather here some basic facts about Hamiltonian mechanics. For a more thorough discussion the reader may consult \cite{sanz1994numerical, neal2010mcmc}. 

Let us start from the Hamiltonian formalism in a finite dimensional setting. 
To a given  real valued and smooth function $\Hf (q,p): \RR^{2N} \rightarrow \RR$, we can associate in a canonical way a system of differential equations, the so called {\em canonical Hamiltonian system associated to} $\Hf$, namely
\begin{align*}
\frac{dq}{dt}&= \nabla_p \Hf (q,p)\\
\frac{dp}{dt}&= -\nabla_q \Hf (q,p)\,.
\end{align*}
Using the symplectic matrix 
\be
J = \left( \begin{array}{cc}
0 & I  \\
-I & 0  \\
 \end{array} \right),
\ee
 and denoting $z=(q,p)\in \mathbb{R}^{2N}$, the canonical Hamiltonian system can be rewritten as
\be\label{hampr}
\frac{dz}{dt}= J \nabla_z \Hf (q,p)\,.
\ee
The two properties of the  Hamiltonian flow \eqref{hampr} that are relevant to our purposes are: i) smooth functions of the Hamiltonian $\Hf$ remain consant along the solutions of \eqref{hampr}; ii) the flow  preserves the volume element $dz$. As a consequence, the Hamiltonian dynamics preserves any measure with density $e^{-\Hf (z)}$ with respect to Lebesgue measure. Clearly, an analogous discussion holds for any systems obtained by making the non-canonical choice
$$
\hat{J} = \left( \begin{array}{cc}
0 & L  \\
-L & 0  \\
 \end{array} \right), \quad    L\,  \mbox{ any symmetric matrix,}
$$
 with corresponding dynamics $$ \frac{dz}{dt}= \hat{J} \nabla_z \Hf (q,p)\,.$$ 

This reasoning can be repeated in our infinite dimensional context (however in this case one cannot talk about conservation of volume element $dz$). The Hamiltonian part of the equations considered in Section  \ref{subs:velnomom} is built precisely in this spirit. The change of variable which allows us to swap from momentum to velocity variable corresponds to going from  the canonical to the non canonical choice.    In particular, once we fix $\mathcal{M}= \mathcal{L}= \C^{-1}$ , our non-canonical symplectic matrix is 
$$
\hat{J} = \left( \begin{array}{cc}
0 & \C \\
-\C & 0  \\
 \end{array} \right).
$$
For more comments about the particular form of the Hamiltonian function in our infinite dimensional setting see Remark \ref{rem:wellposaccprob} and  \cite{BPSS11}.

\bibliographystyle{alpha}
\bibliography{difflim}

\end{document}